\theoremstyle{plain}
\newtheorem{thm}{Theorem}[section]
\newtheorem{lemma}[thm]{Lemma}
\newtheorem{cor}[thm]{Corollary}
\newtheorem{claim}{}[thm]
\newenvironment{subproof}{\begin{proof}[Subproof.]}{\end{proof}}
\DeclareMathOperator{\fcl}{fcl}
\newcommand{\del}{\hspace{-0.5pt}\backslash}
\newcommand{\tangle}{{\mathcal T}}
\begin{document}
 \title{Tangles, trees, and flowers}

\author[vuw]{Ben Clark}
\ead{benjamin.clark@msor.vuw.ac.nz}

\author[vuw]{Geoff Whittle \fnref{gw}}
\ead{geoff.whittle@vuw.ac.nz}

\fntext[gw]{Geoff Whittle was supported by the Marsden Fund of New Zealand.}

\address[vuw]{School of Mathematics, Statistics, and Operations Research, Victoria University of Wellington, P.O. Box 600, Wellington, New Zealand.}

\begin{keyword}
Connectivity \sep tangles \sep tree decomposition \sep matroids
\end{keyword}

\begin{abstract}
A tangle of order $k$ in a matroid or graph may be thought of as a ``$k$-connected component''. For a tangle of order $k$ in a matroid or graph that satisfies a certain robustness condition, we describe a tree decomposition of the matroid or graph that displays, up to a certain natural equivalence, all of the $k$-separations of the matroid or graph that are non-trivial with respect to the tangle.
\end{abstract}

\maketitle

\section{Introduction}
\label{intro}

The structure of the $3$-separations of $3$-connected matroids is described by Oxley, Semple, and Whittle \cite{oxley2004structure,oxley2007structure}. In particular, they show that every $3$-connected matroid $M$ with at least nine elements has a tree decomposition that displays, up to a certain natural equivalence, all of the ``non-trivial'' $3$-separations of $M$. This result was generalised by Aikin and Oxley \cite{aikin2011structure} to describe the structure of the $4$-separations of $4$-connected matroids. They show that every $4$-connected matroid $M$ with at least 17 elements has a tree decomposition that displays, up to a natural equivalence, all of the ``non-trivial'' $4$-separations of $M$. 

It is natural to believe
that analogous structural results will hold for more highly connected matroids. 
Unfortunately strict $k$-connectivity becomes an increasingly artificial requirement 
even for modest values of $k$; for example, projective geometries are not even
4-connected. More realistically, a matroid may have identifiable regions of 
high connectivity. Such regions are captured by the notion of a ``tangle'' in a matroid.
It is also not difficult to see that the existence of the tree decomposition for
3- and 4-connected matroids relies primarily on the fact that the connectivity function of 
a matroid is symmetric and submodular, in other words, it is a ``connectivity system''.
Tangles were introduced for graphs by Robertson and Seymour \cite{robertson1991graph}, 
and were extended to connectivity systems by Geelen, Gerards, Robertson, and 
Whittle \cite{geelen2006obstructions}. 

In this paper we prove theorems about the structure of $k$-separations in 
tangles of order $k$ in connectivity systems.
Loosely speaking, the main result of this paper says that every ``robust'' 
tangle of order $k$ in a matroid $M$ has a tree decomposition that displays, 
up to a certain natural equivalence, all of the $k$-separations of $M$ that 
are ``non-trivial'' with respect to the tangle.
In doing so we are operating at a high level of generality. It is, of course,
the consequences for more particular structures that are of primary interest. 
Let $M$ be a matroid with rank function $r$. Recall that the {\em connectivity
function} $\lambda_M$ of a matroid $M$ on $E$
is defined by $\lambda_M(X)=r(X)+r(E-X)-r(M)+1$ for 
all subsets $X$ of $E$. (Note that we have included the notorious $+1$ in the
definition of $\lambda_M$ in this paper.) The pair $(E,\lambda_M)$ is a connectivity system and
the $k$-separations of a matroid are precisely the $k$-separations of its 
associated connectivity system, so we obtain results for $k$-separations in
tangles of order $k$ in matroids as immediate consequences of our theorems on
connectivity systems. We also obtain consequences for graphs as follows.
Let $G$ be a graph with edge set $E$. For $X\subseteq E$, we let $\lambda_G(X)$ 
denote the number of vertices of $G$ that are incident with both an edge of $X$ 
and an edge of $E-X$. It is not difficult to prove that $(E,\lambda_G)$ is a 
connectivity system. Via this connectivity system
we also obtain consequences for $k$-separations
in tangles of order $k$ in graphs. In this paper we do not further consider applications to 
graphs; nonetheless, we do believe that such applications are potentially of 
some interest.

A matroid may have a unique tangle of a given order. For example, 3- and 4-connected
matroids have unique tangles of order 3 and 4 respectively. Applying the results of this
paper to these tangles gives the above-mentioned tree structures for 
3- and 4-connected matroids. 
We make this connection explicit, but rather than restrict our attention to strict
$k$-connectivity, we focus on a somewhat more general notion of connectivity. Let
$k\geq 2$ be an integer. 
A matroid $M$ is {\em loosely vertically $k$-connected} if, for each $(k-1)$-separation 
$(X,Y)$ of $M$, either $r(X)\leq k-2$ or $r(Y)\leq k-2$. 
A loosely vertically $k$-connected matroid with rank at least $3k-4$ has a tangle of
order $k$. Moreover, this tangle is unique. 
Via an appropriate interpretation we obtain, in each section, results for 
loosely vertically $k$-connected matroids. In particular we prove that, up to a 
natural notion of equivalence, all of the ``interesting'' $k$-separations in 
a loosely vertically $k$-connected matroids can be displayed in a tree-like
way. Some readers may be more interested in these consequences than in the 
more general results
for connectivity systems. To this end, we have placed this matter in subsections at the
end of each section. 

A matroid $M$ is {\em vertically $k$-connected} if, for each $l$-separation $(X,Y)$,
where $l<k$, either $r(X)<l$ or $r(Y)<l$. This is an important connectivity notion
that generalises $k$-connectivity for graphs, see, for example Oxley \cite[Chapter~8.6]{oxley2011matroid}.
We have no wish to supplant this terminology, but, to avoid cumbersome statements,
for the remainder of this paper ``vertically $k$-connected'' will mean
``loosely vertically $k$-connected''. 

Another tree that can be associated with a connectivity system---and hence a matroid or
graph---is the so-called ``tree of tangles''; see for example 
\cite{geelen2009tangles,robertson1991graph}. This is a tree that displays in a certain way all
of the maximal tangles. Ideally one would like to be able associate a tree with a 
connectivity system that displays all of the maximal tangles and representatives of all
of the interesting separations within tangles. Unfortunately this does not seem possible
with the separations that we focus on in this paper. However, it might be possible to do
this if attention is restricted to certain subsets of separations. With this end in
mind we define what it means for a collection of separations to be ``tree compatible''
in Section~3. A tree-compatible collection of separations can be displayed in a tree-like
way and it may be that we can associate a tree with a connectivity system that simultaneously 
displays all of the maximal tangles and certain interesting collections of
tree-compatible separations within the maximal tangles. We leave this as a question for
future research.

More specifically the paper is organised as follows. Section \ref{basics} contains 
basic definitions and results about connectivity systems and tangles. In Section \ref{ksep} 
we define a notion of equivalence on the $k$-separations of a connectivity system with 
respect to a fixed tangle of order $k$, and we also define what it means for a 
$k$-separation of a connectivity system to be sequential with respect to a fixed 
tangle of order $k$. In section \ref{flowtang} we introduce $k$-flowers, which 
enable us to display a collection of crossing $k$-separations. In Section \ref{conf} 
we study how the non-sequential $k$-separations can interact with $k$-flowers, and 
prove Theorem \ref{TMkflowerconf}, which says that if we introduce a notion of robustness 
for tangles, then all of the non-sequential $k$-separations interact with a ``maximal'' 
$k$-flower in a coherent way. Section \ref{ktree} contains definitions and results 
about tree decompositions. Finally, in Section \ref{mainone} we prove our main theorem.

\section{Connectivity systems and tangles}
\label{basics}

Let $E$ be a finite set, and let $\lambda$ be an integer-valued function on the subsets 
of $E$. We call $\lambda$ \textit{symmetric} if $\lambda(X)=\lambda(E-X)$ for all 
$X\subseteq E$. We call $\lambda$ \textit{submodular} if 
$\lambda(X)+\lambda(Y)\geq \lambda(X\cup Y)+\lambda(X\cap Y)$ for all $X,Y\subseteq E$. 
A \textit{connectivity function} on $E$ is an integer-valued function on the subsets of 
$E$ that is both symmetric and submodular. A \textit{connectivity system} is an ordered 
pair $(E,\lambda)$ consisting of a finite set $E$ and a connectivity function $\lambda$ on $E$. 

Let $(E,\lambda)$ be a connectivity system, and let $k$ be a positive integer. 
A partition $(X,E-X)$ of $E$ is called a \textit{$k$-separation} of $\lambda$ if 
$\lambda(X)\leq k$. A subset $X$ of $E$ is said to be \textit{$k$-separating} in $\lambda$ 
if $\lambda(X)\leq k$. When the connectivity function $\lambda$ is clear from the context 
we shall often 
abbreviate ``$k$-separation of $\lambda$'' and ``$k$-separating set in $\lambda$'' to 
``$k$-separation'' and ``$k$-separating set'' respectively. A $k$-separating set $X$, or 
$k$-separation $(X,E-X)$, is \textit{exact} if $\lambda(X)=k$. Note that we consider 
a $k$-separation $(X,E-X)$ to be an unordered partition of $E$, and we make no 
assumptions on the number of elements in the sets $X$ and $E-X$.

We will make use of the following elementary properties of connectivity functions.

\begin{lemma}
\label{elemconn}\cite[Lemma 2.3.]{geelen2006obstructions}
If $\lambda$ is a connectivity function on $E$, then, for all $X,Y\subseteq E$, we have:\begin{enumerate}
 \item[(i)] $\lambda(X)\geq \lambda(\emptyset)$.
 \item[(ii)] $\lambda(X)+\lambda(Y)\geq \lambda(X-Y)+\lambda(Y-X)$.
\end{enumerate}
\end{lemma}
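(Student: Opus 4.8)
The plan is to prove Lemma \ref{elemconn}, which establishes two elementary consequences of symmetry and submodularity of a connectivity function $\lambda$.

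First I would prove part (ii), since it is the more substantive of the two and part (i) will follow from similar reasoning. The idea is to apply submodularity not to $X$ and $Y$ directly, but to a carefully chosen pair whose union and intersection are the sets $X-Y$ and $Y-X$ that appear on the right-hand side. The natural choice is to exploit symmetry: note that $\lambda(Y)=\lambda(E-Y)$, so it suffices to bound $\lambda(X-Y)+\lambda(Y-X)$ using $\lambda(X)+\lambda(E-Y)$. I would then observe that $X\cup(E-Y)=E-(Y-X)$ and $X\cap(E-Y)=X-Y$. Applying submodularity to the pair $(X,E-Y)$ gives
\begin{equation*}
\lambda(X)+\lambda(E-Y)\geq \lambda\bigl((X\cup(E-Y)\bigr)+\lambda\bigl(X\cap(E-Y)\bigr)=\lambda\bigl(E-(Y-X)\bigr)+\lambda(X-Y).
\end{equation*}
Finally, applying symmetry twice — once to replace $\lambda(E-Y)$ by $\lambda(Y)$ on the left, and once to replace $\lambda(E-(Y-X))$ by $\lambda(Y-X)$ on the right — yields exactly the desired inequality $\lambda(X)+\lambda(Y)\geq \lambda(X-Y)+\lambda(Y-X)$.

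For part (i), I would deduce it as a special case or by a short direct argument. The cleanest route is to apply submodularity to the pair $(X,E-X)$: this gives $\lambda(X)+\lambda(E-X)\geq \lambda(E)+\lambda(\emptyset)$, and since $\lambda(X)=\lambda(E-X)$ by symmetry, this reads $2\lambda(X)\geq \lambda(E)+\lambda(\emptyset)$. Applying the same identity with $X$ replaced by $E$ gives $2\lambda(E)\geq\lambda(E)+\lambda(\emptyset)$, so $\lambda(E)\geq\lambda(\emptyset)$; combining the two inequalities yields $2\lambda(X)\geq\lambda(E)+\lambda(\emptyset)\geq 2\lambda(\emptyset)$, whence $\lambda(X)\geq\lambda(\emptyset)$. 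Alternatively, part (i) follows directly from part (ii) by setting $Y=E$, since $\lambda(X)+\lambda(E)\geq\lambda(X-E)+\lambda(E-X)=\lambda(\emptyset)+\lambda(E-X)$, and then using $\lambda(E-X)=\lambda(X)$ together with $\lambda(E)\geq\lambda(\emptyset)$.

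Neither part presents a genuine obstacle, as both are purely formal manipulations of the two defining axioms. The only point requiring care is the bookkeeping of set identities such as $X\cap(E-Y)=X-Y$ and $X\cup(E-Y)=E-(Y-X)$, and the correct placement of the symmetry substitutions; these must be applied to the right terms so that the complements cancel cleanly. Since the result is quoted from \cite{geelen2006obstructions}, the expectation is simply a brief verification rather than a deep argument.
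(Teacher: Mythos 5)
Your main arguments are correct, but note that the paper itself gives no proof of this lemma at all---it is quoted verbatim from \cite[Lemma 2.3]{geelen2006obstructions}---so there is no in-paper argument to compare against; what you have written is the standard verification. For (ii), applying submodularity to the pair $(X,E-Y)$ and using the identities $X\cap(E-Y)=X-Y$ and $X\cup(E-Y)=E-(Y-X)$, then cleaning up with symmetry, is exactly right. For (i), your primary route (submodularity on $(X,E-X)$, giving $2\lambda(X)\geq\lambda(E)+\lambda(\emptyset)$, together with $\lambda(E)\geq\lambda(\emptyset)$) is valid; in fact symmetry gives $\lambda(E)=\lambda(E-E)=\lambda(\emptyset)$ outright, which shortens it.

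One caveat: your ``alternatively'' remark deducing (i) from (ii) by setting $Y=E$ does not work. That substitution yields $\lambda(X)+\lambda(E)\geq\lambda(\emptyset)+\lambda(E-X)=\lambda(\emptyset)+\lambda(X)$, which after cancelling $\lambda(X)$ only proves $\lambda(E)\geq\lambda(\emptyset)$, not the statement (i) about an arbitrary $X$. The substitution that does make (i) a special case of (ii) is $Y=X$: then $2\lambda(X)\geq\lambda(X-X)+\lambda(X-X)=2\lambda(\emptyset)$, and (i) follows immediately. Since this remark is only an aside and your primary derivation of (i) is complete, the proof as a whole stands.
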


The submodularity of a connectivity function $\lambda$ is frequently used in the following form, and we write \textit{by uncrossing $X$ and $Y$} to mean ``by an application of Lemma \ref{uncrossing2}''. 

\begin{lemma}
\label{uncrossing2}
Let $X$ and $Y$ be $k$-separating subsets of $E$.

(i) If $\lambda(X\cap Y)\geq k$, then $X\cup Y$ is $k$-separating.

(ii) If $\lambda(E-(X\cup Y))\geq k$, then $X\cap Y$ is $k$-separating.
\end{lemma}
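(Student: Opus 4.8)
The plan is to prove Lemma~\ref{uncrossing2} directly from submodularity together with the symmetry of $\lambda$, handling the two parts by the standard uncrossing argument. I would begin by recording the hypotheses: $X$ and $Y$ are $k$-separating, so $\lambda(X)\leq k$ and $\lambda(Y)\leq k$. Submodularity gives
\[
\lambda(X)+\lambda(Y)\geq \lambda(X\cup Y)+\lambda(X\cap Y),
\]
which is the single inequality driving both parts.

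For part~(i), I would rearrange this inequality to isolate $\lambda(X\cup Y)$, obtaining
\[
\lambda(X\cup Y)\leq \lambda(X)+\lambda(Y)-\lambda(X\cap Y).
\]
Using $\lambda(X)\leq k$ and $\lambda(Y)\leq k$ on the right, together with the hypothesis $\lambda(X\cap Y)\geq k$, the right-hand side is at most $k+k-k=k$, so $\lambda(X\cup Y)\leq k$ and $X\cup Y$ is $k$-separating, as required.

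For part~(ii), the idea is to reduce to part~(i) by complementation, exploiting symmetry. Since $\lambda$ is symmetric, $E-X$ and $E-Y$ are also $k$-separating, and $\lambda\bigl((E-X)\cap(E-Y)\bigr)=\lambda\bigl(E-(X\cup Y)\bigr)\geq k$ by hypothesis. Applying part~(i) to the pair $E-X$ and $E-Y$ shows that $(E-X)\cup(E-Y)=E-(X\cap Y)$ is $k$-separating, and one more application of symmetry gives $\lambda(X\cap Y)=\lambda\bigl(E-(X\cap Y)\bigr)\leq k$, so $X\cap Y$ is $k$-separating.

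There is no serious obstacle here; the only point requiring a little care is the bookkeeping of complements in part~(ii), where I must correctly translate $E-(X\cup Y)$ into an intersection of the complements so that part~(i) applies cleanly. Everything else is a routine manipulation of the submodular inequality combined with symmetry.
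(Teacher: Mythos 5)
Your proof is correct: part~(i) is exactly the rearrangement of the submodular inequality, and the complementation argument in part~(ii) (using symmetry and De Morgan's laws) is sound. The paper states Lemma~\ref{uncrossing2} without proof, treating it as an immediate consequence of submodularity and symmetry, and your argument is precisely that standard one.
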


Tangles were introduced for graphs by Robertson and Seymour \cite{robertson1991graph}, and were extended to connectivity systems by Geelen, Gerards, Robertson, and Whittle \cite{geelen2006obstructions}. For a positive integer $k$, a \textit{tangle of order $k$} in a connectivity system $(E,\lambda)$ is a collection $\mathcal{T}$ of subsets of $E$ such that the following properties hold:
\begin{enumerate}
 \item[(T1)] $\lambda(A)<k$ for all $A\in \mathcal{T}$.
 \item[(T2)] If $(A,E-A)$ is a $(k-1)$-separation, then $\mathcal{T}$ contains $A$ or $E-A$.
 \item[(T3)] If $A,B,C\in \mathcal{T}$, then $A\cup B\cup C\not= E$.
 \item[(T4)] $E-\{e\}\notin \mathcal{T}$ for each $e\in E$.
\end{enumerate}

Let $\mathcal{T}$ be a tangle of order $k$ in a connectivity system $(E,\lambda)$. 
A subset $X$ of $E$ is \textit{$\mathcal{T}$-strong} if it is not contained in a 
member of $\mathcal{T}$; otherwise $X$ is \textit{$\mathcal{T}$-weak}. 
It is easy to see that supersets of $\mathcal{T}
$-strong sets are $\mathcal{T}$-strong, and that subsets of $\mathcal{T}$-weak sets 
are $\mathcal{T}$-weak. A partition $(X_1,\ldots, X_n)$ of $E$ is \textit{$\mathcal{T}$-strong} 
if $X_i$ is a $\mathcal{T}$-strong set for all $i\in [n]$; otherwise $(X_1,\ldots, X_n)$ 
is \textit{$\mathcal{T}$-weak}. In particular, a $k$-separation $(X,E-X)$ of $\lambda$ 
is \textit{$\mathcal{T}$-strong} if both $X$ and $E-X$ are $\mathcal{T}$-strong sets; 
otherwise $(X,E-X)$ is \textit{$\mathcal{T}$-weak}. A number of uncrossing arguments 
make use of the fact that, if a partition $(X,E-X)$ of $E$ is $\mathcal{T}$-strong, 
then neither $X$ nor $E-X$ is a member of $\mathcal{T}$, so $\lambda(X)\geq k$ by (T2).

In any unexplained context, if we use the phrase ``$\mathcal{T}$-strong 
$k$-separating set'' or ``$\mathcal{T}$-strong $k$-separation'' without 
mention of the order of the tangle $\mathcal{T}$, then it will be implicit 
that $\mathcal{T}$ has order $k$.

\subsection*{Tangles in vertically $k$-connected matroids} 
Recall that, for $k\geq 2$,  a matroid $M$ is {\em loosely vertically $k$-connected} if,
for every $(k-1)$-separation $(A,B)$ of $M$, either $r(A)\leq k-2$ or $r(B)\leq k-2$
and that when we say that a matroid is ``vertically $k$-connected'' we will mean that it is
``loosely vertically $k$-connected.''
Degeneracies can arise because of low rank, but once past these, a vertically
$k$-connected matroid has a unique tangle.

\begin{lemma}
\label{one-tangle}
Let $M$ be a vertically $k$-connected matroid on $E$, where $k\geq 2$
and $r(M)\geq \max\{3k-5,2\}$. Then $M$ has a unique tangle $\mathcal T$ of 
order $k$. Moreover a subset $A$ belongs to $\mathcal T$ if and only if 
$r(A)\leq k-2$.
\end{lemma}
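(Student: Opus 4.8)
The plan is to verify that
\[
\mathcal{T}=\{A\subseteq E: r(A)\le k-2\}
\]
is a tangle of order $k$, and then to show that every tangle of order $k$ must coincide with it. For existence, conditions (T1) and (T4) are direct rank estimates: if $r(A)\le k-2$ then $\lambda_M(A)=r(A)+r(E-A)-r(M)+1\le k-1<k$, giving (T1); and since $r(E-\{e\})\ge r(M)-1\ge k-1>k-2$ (using $r(M)\ge k$, which follows from $r(M)\ge\max\{3k-5,2\}$), no set $E-\{e\}$ has rank at most $k-2$, giving (T4). Condition (T2) is exactly the hypothesis of vertical $k$-connectivity: a $(k-1)$-separation $(A,E-A)$ satisfies $r(A)\le k-2$ or $r(E-A)\le k-2$, so one side lies in $\mathcal{T}$. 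For (T3), subadditivity of the rank function gives $r(A\cup B\cup C)\le r(A)+r(B)+r(C)\le 3(k-2)=3k-6<r(M)$, so $A\cup B\cup C\ne E$; this is precisely where the bound $r(M)\ge 3k-5$ is needed.

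It remains to prove that an arbitrary tangle $\mathcal{T}'$ of order $k$ equals $\mathcal{T}$, i.e. that $A\in\mathcal{T}'$ if and only if $r(A)\le k-2$. I would first record the easy direction modulo the hard one. Suppose every member of $\mathcal{T}'$ has rank at most $k-2$ (the claim proved below). If $r(B)\le k-2$, then $\lambda_M(B)\le k-1$, so $(B,E-B)$ is a $(k-1)$-separation, and by (T2) either $B\in\mathcal{T}'$ or $E-B\in\mathcal{T}'$; but $E-B\in\mathcal{T}'$ would force $r(E-B)\le k-2$ and hence $r(M)\le r(B)+r(E-B)\le 2k-4$, contradicting $r(M)\ge 3k-5$ (as $k\ge 2$). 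Thus $B\in\mathcal{T}'$, and combined with the claim this yields $\mathcal{T}'=\{A:r(A)\le k-2\}=\mathcal{T}$, proving both uniqueness and the characterization.

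The crux, and the step I expect to be the main obstacle, is showing that no member of $\mathcal{T}'$ has rank at least $k-1$. Let $A\in\mathcal{T}'$; by (T1) $\lambda_M(A)\le k-1$, so $(A,E-A)$ is a $(k-1)$-separation and vertical connectivity gives $r(A)\le k-2$ or $r(E-A)\le k-2$. I must rule out the wrong-side possibility that $A$ is a member with $r(E-A)\le k-2$. The idea is a telescoping argument along a chain from $A$ up to $E$. Since $E\notin\mathcal{T}'$ (otherwise $E\cup E\cup E=E$ violates (T3)), we have $E-A\ne\emptyset$; enumerate $E-A=\{e_1,\dots,e_m\}$ and set $A_j=A\cup\{e_1,\dots,e_j\}$, so $A_0=A\in\mathcal{T}'$ and $A_m=E\notin\mathcal{T}'$. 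Each $E-A_j\subseteq E-A$ has rank at most $k-2$, so $\lambda_M(A_j)\le k-1$ and (T2) places $A_j$ or $E-A_j$ in $\mathcal{T}'$. Choosing the least $i$ with $A_i\notin\mathcal{T}'$ gives $A_{i-1}\in\mathcal{T}'$ and, by (T2), $E-A_i\in\mathcal{T}'$. Finally $\{e_i\}\in\mathcal{T}'$: indeed $\lambda_M(\{e_i\})\le 2\le k-1$ when $k\ge 3$, while when $k=2$ the set $E-A$ has rank $0$, so $e_i$ is a loop and $\lambda_M(\{e_i\})=1=k-1$; in either case (T2) and (T4) force $\{e_i\}\in\mathcal{T}'$. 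Since $e_i\notin A_{i-1}$, we have $A_{i-1}\cup(E-A_i)=E-\{e_i\}$, so $A_{i-1}\cup(E-A_i)\cup\{e_i\}=E$ exhibits three members of $\mathcal{T}'$ whose union is $E$, contradicting (T3). This contradiction shows $r(E-A)>k-2$, hence $r(A)\le k-2$, completing the hard step and hence the lemma.
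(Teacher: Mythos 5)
Your proposal is correct and follows essentially the same route as the paper: the same candidate tangle $\{A\subseteq E : r(A)\le k-2\}$ with identical verifications of (T1)--(T4), and uniqueness via a one-element-at-a-time exchange across a $(k-1)$-separation, driven by (T2), by (T4) together with (T2) for singletons, and by (T3). The only difference is bookkeeping: the paper iterates the exchange until it contradicts (T4), whereas you locate the first failure along your chain $A_0\subseteq A_1\subseteq\cdots\subseteq A_m$ and derive the contradiction from (T3) at that point.
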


\begin{proof}
Let ${\mathcal T}=\{X\subseteq E\ |\ r(X)\leq k-2\}$. We first show that
$\mathcal T$ is a tangle of order $k$. Property (T1) follows immediately
while (T2) follows from the definition of vertical $k$-connectivity.
Say $A,B,C\in\mathcal T$. Then $r(A\cup B\cup C)\leq 3(k-2)<r(M)$,
so $A\cup B\cup C\neq E$. Thus (T3) holds.

Consider (T4). Say $E-\{e\}\in\mathcal T$ for some $e\in E$. Then 
$r(E-\{e\})\leq k-2$. If $k\geq 3$, this immediately contradicts the fact that
$r(M)\geq 3k-5$. In the case that $k=2$, we see that $r(E-\{e\})=0$, so that
$r(M)\leq 1$, contradicting the fact that $r(M)\geq 2$. We conclude that (T4)
also holds so that $\mathcal T$ is indeed a tangle in $M$ of order $k$.

It remains to show that $\mathcal T$ is unique. Let ${\mathcal T}'$ be a
tangle of order $k$ in $M$. Say $(A,B)$ is a $(k-1)$-separation.
By the definition of vertical $k$-connectivity we may assume that
$r(A)\leq k-2$. Assume that $B\in{\mathcal T}'$. By (T3) $A\neq \emptyset$. 
Choose $a\in A$. Then $r(A-\{a\})\leq k-2$ and $r(\{a\})\leq k-2$.
By (T4), $\{a\}\in {\mathcal T}'$. If $A-\{a\}\in{\mathcal T}'$,
then we contradict (T3). Thus $B\cup\{a\}\in{\mathcal T}'$. Iterating this 
procedure leads to a contradiction of (T4). Thus $B\notin{\mathcal T}'$, so that
$A\in{\mathcal T}'$ and we conclude that ${\mathcal T}'=\mathcal T$.
\end{proof}

Interpreting Lemma~\ref{one-tangle} for $k$-connected matroids, we obtain

\begin{cor}
\label{one-tangle-cor}
Let $M$ be a $k$-connected matroid on $E$, where $k\geq 2$
and $|E|\geq \max\{3k-5,2\}$. Then $M$ has a unique tangle $\mathcal T$ of 
order $k$. Moreover a subset $A$ belongs to $\mathcal T$ if and only if 
$|A|\leq k-2$.
\end{cor}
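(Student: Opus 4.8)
The plan is to transcribe the proof of Lemma~\ref{one-tangle}, systematically replacing the rank $r(\cdot)$ by the cardinality $|\cdot|$ and replacing ``vertical $k$-connectivity'' by its cardinality counterpart. One point deserves emphasis at the outset: one should not simply quote Lemma~\ref{one-tangle}, because a $k$-connected matroid may have rank well below $3k-5$ --- for instance $U_{2,n}$ is $3$-connected but has rank $2$ --- so the rank hypothesis of that lemma can fail even when the present conclusion still holds. Thus the argument must be rerun in terms of cardinality, and this is really the only subtlety.

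First I would record the engine of the proof: if $M$ is $k$-connected and $(X,E-X)$ is any $(k-1)$-separation, then $\min\{|X|,|E-X|\}\leq k-2$. Indeed, writing $l=\lambda_M(X)\leq k-1<k$, were both sides to have at least $l$ elements then $(X,E-X)$ would be an $l$-separation with $l<k$, contradicting $k$-connectivity; hence $\min\{|X|,|E-X|\}\leq l-1\leq k-2$. This statement plays exactly the role that vertical $k$-connectivity played in Lemma~\ref{one-tangle}.

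Next I would set $\mathcal{T}=\{X\subseteq E\mid |X|\leq k-2\}$ and verify the tangle axioms. For (T1), $|X|\leq k-2$ gives $\lambda_M(X)\leq r(X)+1\leq |X|+1\leq k-1<k$. Property (T2) is immediate from the engine above. For (T3), any $A,B,C\in\mathcal{T}$ satisfy $|A\cup B\cup C|\leq 3(k-2)=3k-6<3k-5\leq |E|$, so $A\cup B\cup C\neq E$; and for (T4), $|E-\{e\}|=|E|-1\geq 3k-6>k-2$ when $k\geq 3$, while for $k=2$ we have $|E-\{e\}|\geq 1>0=k-2$. The edge case $k=2$ (where $\mathcal{T}=\{\emptyset\}$ and $k$-connectivity just means ``connected'') should be checked directly but causes no trouble.

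Finally, for uniqueness I would mirror the iteration in Lemma~\ref{one-tangle}. Let $\mathcal{T}'$ be any tangle of order $k$. The forward inclusion $\mathcal{T}\subseteq\mathcal{T}'$ comes from showing that the small side of a $(k-1)$-separation lies in $\mathcal{T}'$: given such a separation $(A,B)$ with $|A|\leq k-2$, assume $A\notin\mathcal{T}'$, so $B\in\mathcal{T}'$ by (T2); peeling off elements of $A$ one at a time, and using that each successively smaller set is $(k-1)$-separating, (T2) forces the growing complement into $\mathcal{T}'$ while (T3) forbids the removed remainder from lying in $\mathcal{T}'$, until a contradiction with (T4) is forced exactly as in Lemma~\ref{one-tangle}. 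For the reverse inclusion, take $X\in\mathcal{T}'$; by (T1) the pair $(X,E-X)$ is a $(k-1)$-separation, so by the engine one side has at most $k-2$ elements. If $|X|\leq k-2$ then $X\in\mathcal{T}$; otherwise $|E-X|\leq k-2$, so the forward argument gives $E-X\in\mathcal{T}'$, and then $X,E-X\in\mathcal{T}'$ with $X\cup(E-X)\cup X=E$ contradicts (T3). Hence $\mathcal{T}'=\mathcal{T}$. The main thing to get right is the cardinality form of $k$-connectivity together with the bookkeeping in the peeling step; everything else is a direct transcription of the earlier argument.
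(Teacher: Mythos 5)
Your proof is correct and follows the paper's intended approach: the paper obtains this corollary by ``interpreting'' Lemma~\ref{one-tangle} with rank replaced by cardinality, which is exactly the transcription you carry out (the same engine---Tutte $k$-connectivity forcing one side of any $(k-1)$-separation to have at most $k-2$ elements---the same axiom checks, and the same peeling iteration for uniqueness). Your opening observation that one cannot simply quote Lemma~\ref{one-tangle}, since a $k$-connected matroid such as $U_{2,n}$ may have rank below $3k-5$, is a genuine subtlety that the paper glosses over, and your rerun of the argument handles it correctly.
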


For a vertically $k$-connected matroid $M$ with $r(M)\geq \max\{3k-5,2\}$,
we denote the unique tangle of order $k$ in $M$ by ${\mathcal T}_k$.
Thus a subset $A\subseteq E(M)$ is ${\mathcal T}_k$-weak if and only if
$r(A)\leq k-2$ and the partition $(A_1,A_2,\ldots,A_n)$ is 
$\tangle_k$-strong if and only if $r(A_i)\geq k-1$ for all $i\in\{1,2,\ldots,n\}$.
In particular, if $M$ is $k$-connected, then $A\subseteq E(M)$ is $\tangle_k$-weak 
if and only if
$|A|\leq k-2$ and the partition $(A_1,A_2,\ldots,A_n)$ is 
$\tangle_k$-strong if and only if $|A|\geq k-1$ for all $i\in\{1,2,\ldots,n\}$.

\section{Sequential and equivalent $k$-separations}
\label{ksep}

In this section we fix a tangle $\mathcal{T}$ of order $k$ in a connectivity 
system $(E,\lambda)$, and we focus on the $k$-separations of $\lambda$ 
that are $\mathcal{T}$-strong. We then define a natural notion of equivalence 
on the $\mathcal{T}$-strong $k$-separations of $\lambda$, and we define what it 
means for a $\mathcal{T}$-strong $k$-separation to be sequential with respect to $\mathcal{T}$.

Let $\mathcal{T}$ be a tangle of order $k$ 
in a connectivity system $(E,\lambda)$. A $\mathcal{T}$-strong $k$-separating set $X$ 
is \textit{fully closed with respect to 
$\mathcal{T}$} if $\lambda(X\cup Y)>k$ is not $k$-separating for every non-empty 
$\mathcal{T}$-weak set 
$Y\subseteq E-X$. In particular, we observe that if a $k$-separating set $X$ 
is a proper subset of $E$ that is fully closed with respect to $\mathcal{T}$, 
then $E-X$ is a $\mathcal{T}$-strong set because $X\cup (E-X)$ is $k$-separating 
by Lemma \ref{elemconn} (i). We abbreviate ``fully closed with respect to $\mathcal{T}$'' 
to ``fully closed'' when the tangle $\mathcal{T}$ is clear from the context.

We shall show that every $\mathcal{T}$-strong $k$-separating set is contained in a unique minimal $k$-separating set that is fully closed with respect to $\mathcal{T}$. The following lemma is the main step towards this.

\begin{lemma}
\label{intsectfcl}
Let $\mathcal{T}$ be a tangle of order $k$ in a connectivity system $(E,\lambda)$, and let $X$ be a $\mathcal{T}$-strong $k$-separating set in $\lambda$. If $X_1$ and $X_2$ are fully-closed $k$-separating sets that contain $X$, then there is a fully-closed $k$-separating set $Y$ such that $X\subseteq Y\subseteq X_1\cap X_2$.
\end{lemma}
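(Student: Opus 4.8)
The plan is to realise $Y$ as a \emph{full closure} of $X$, built from the inside out, and to show that this construction can never escape $X_1\cap X_2$. Concretely, I would set $W_0=X$ and, as long as the current set $W_i$ is not fully closed, choose a non-empty $\mathcal{T}$-weak set $Y_i\subseteq E-W_i$ with $W_i\cup Y_i$ being $k$-separating, and put $W_{i+1}=W_i\cup Y_i$. Since $E$ is finite this terminates in a $k$-separating set $Y=W_m$ that is fully closed and contains $X$. Note that every $W_i$ contains the $\mathcal{T}$-strong set $X$ and hence is itself $\mathcal{T}$-strong, which is exactly what makes ``fully closed'' meaningful at each stage. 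The whole lemma then reduces to the claim that each $W_i$ remains inside $X_1\cap X_2$.

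The expected main obstacle is that the most naive route---uncrossing $X_1$ and $X_2$ to prove that $X_1\cap X_2$ is itself $k$-separating and fully closed, and taking $Y=X_1\cap X_2$---breaks down: when $\lambda(X_1\cup X_2)<k$, and in particular when $X_1\cup X_2=E$, uncrossing gives no usable bound on $\lambda(X_1\cap X_2)$, and the intersection need not be $k$-separating at all. This is why I would route through the inside-out construction, whose correctness rests on the following containment claim: if $W$ is a $\mathcal{T}$-strong $k$-separating set with $W\subseteq X_1$, and $Y'\subseteq E-W$ is a $\mathcal{T}$-weak set with $W\cup Y'$ being $k$-separating, then $Y'\subseteq X_1$. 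Granting this, I would apply it along the construction with $W=W_i$ and $Y'=Y_i$, using first $X_1$ and then $X_2$, to conclude inductively that $W_{i+1}=W_i\cup Y_i\subseteq X_1\cap X_2$; the base case $W_0=X\subseteq X_1\cap X_2$ is the hypothesis, so $Y=W_m\subseteq X_1\cap X_2$, as required. (The degenerate case $X_1=E$ is immediate, since then $Y'\subseteq E=X_1$ trivially, and similarly for $X_2$.)

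To prove the containment claim I would uncross $X_1$ with $W\cup Y'$, both of which are $k$-separating. The decisive step is to show that their intersection $X_1\cap(W\cup Y')=W\cup(X_1\cap Y')$ has connectivity at least $k$. This set is $\mathcal{T}$-strong because it contains $W$; and, using $W\subseteq X_1$, its complement contains $E-X_1$, which is $\mathcal{T}$-strong because $X_1$ is fully closed and (in the nontrivial case) proper. Thus the partition $\bigl(W\cup(X_1\cap Y'),\,E-(W\cup(X_1\cap Y'))\bigr)$ is $\mathcal{T}$-strong, and the observation following (T2)---that a $\mathcal{T}$-strong partition has connectivity at least $k$---yields $\lambda\bigl(X_1\cap(W\cup Y')\bigr)\geq k$. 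Lemma~\ref{uncrossing2}(i) then makes $X_1\cup(W\cup Y')=X_1\cup Y'$ $k$-separating.

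It remains to extract the containment from full closedness. The set $Y'\cap(E-X_1)$ is a $\mathcal{T}$-weak subset of $E-X_1$ (being a subset of the $\mathcal{T}$-weak set $Y'$), and its union with $X_1$ is exactly $X_1\cup Y'$, which we have just shown to be $k$-separating. Since $X_1$ is fully closed, there is no non-empty $\mathcal{T}$-weak subset of $E-X_1$ whose union with $X_1$ is $k$-separating; hence $Y'\cap(E-X_1)=\emptyset$, that is, $Y'\subseteq X_1$. This establishes the containment claim, and with it the lemma. The only subtlety to watch is that every set to which ``fully closed'' or the strong-partition observation is applied is genuinely $\mathcal{T}$-strong, which the construction guarantees because each $W_i$ is a superset of the $\mathcal{T}$-strong set $X$.
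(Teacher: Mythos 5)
Your proof is correct, and its heart is the same uncrossing computation as the paper's, but the outer scaffolding is dual to what the paper does. The paper chooses $Y$ \emph{maximal} among $k$-separating sets with $X\subseteq Y\subseteq X_1\cap X_2$, so containment is automatic and only full closure needs proof: a witness $Z$ to non-closure either lies in $X_1\cap X_2$ (contradicting maximality) or escapes some $X_i$, in which case the paper uncrosses $X_i$ with $Y\cup Z$ --- using (T2) on the partition whose sides contain $X$ and $E-X_i$ respectively --- to make $X_i\cup Z$ $k$-separating and contradict full closure of $X_i$. You instead build $Y$ constructively so that full closure is automatic at termination, and prove containment by induction via your key claim; the proof of that claim (uncross $X_1$ with $W\cup Y'$, note the intersection $W\cup(X_1\cap Y')$ sits in a $\mathcal{T}$-strong partition since one side contains the strong set $W$ and the other contains $E-X_1$, conclude $X_1\cup Y'$ is $k$-separating, and invoke full closure of $X_1$) is step-for-step the same argument the paper runs in its escape case. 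What your version buys is modularity: the claim is a reusable statement (any $\mathcal{T}$-weak extension of a strong $k$-separating subset of a fully closed set stays inside it, which is close in spirit to Lemma~\ref{fclcontain}), and your construction makes explicit that the resulting $Y$ is essentially $\fcl_{\mathcal{T}}(X)$ built from a partial $k$-sequence. What the paper's version buys is brevity: no induction is needed, and the degenerate case $X_1=E$ never arises separately because the escape hypothesis ($Z$ meets $E-X_1$) already forces $X_1$ to be proper, whereas you correctly had to split that case off by hand. Your diagnosis of why the naive route (taking $Y=X_1\cap X_2$ directly) fails is also accurate and is precisely why both proofs must pass through a set built up from $X$ rather than cut down from $X_1\cap X_2$.
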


\begin{proof}
Let $Y$ be maximal with respect to the properties that $X\subseteq Y\subseteq X_1\cap X_2$ and $\lambda(Y)\leq k$. Assume towards a contradiction that $Y$ is not fully closed. Then there exists a non-empty $\mathcal{T}$-weak set $Z$ contained in $E-Y$ such that $Y\cup Z$ is $k$-separating. If $Z\subseteq X_1\cap X_2$, then $X\subseteq Y\cup Z\subseteq X_1\cap X_2$ and $Y\cup Z$ properly contains $Y$; a contradiction of the maximality of $Y$. Therefore, up to switching $X_1$ and $X_2$, we may assume that $Z$ meets $E-X_1$. Now, the partition $(Y\cup (Z\cap X_1), E-(Y\cup (Z\cap X_1)))$ is $\mathcal{T}$-strong because $Y\cup (Z\cap X_1)$ contains the $\mathcal{T}$-strong set $X$ while $E-(Y\cup (Z\cap X_1))$ contains the $\mathcal{T}$-strong set $E-X_1$, so $\lambda(Y\cup (Z\cap X_1))\geq k$ by (T2). But $Y\cup (Z\cap X_1)$ is the intersection of the $k$-separating sets $Y\cup Z$ and $X_1$, so their union $X_1\cup Z$ is $k$-separating by uncrossing; a contradiction because $X_1$ is fully closed.  
\end{proof}

We omit the proof of the next result, which follows from Lemma \ref{intsectfcl} and a straightforward induction.

\begin{cor}
 \label{fcldefn}
Let $\mathcal{T}$ be a tangle of order $k$ in a connectivity system $(E,\lambda)$. Let $X$ be a $\mathcal{T}$-strong $k$-separating set, and let $\mathcal{F}$ be the set of fully-closed $k$-separating sets that contain $X$. Then $\bigcap \mathcal{F}$ is a fully-closed $k$-separating set that contains $X$.
\end{cor}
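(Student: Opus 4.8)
The plan is to reduce the a priori unbounded intersection to a finite one and then bootstrap Lemma \ref{intsectfcl} from two sets to finitely many. First I would record that $\mathcal{F}$ is non-empty and finite. The set $E$ is vacuously fully closed, since there is no non-empty $\mathcal{T}$-weak set contained in $E-E=\emptyset$, and it is $k$-separating because $\lambda(E)=\lambda(\emptyset)\leq\lambda(X)\leq k$ by symmetry and Lemma \ref{elemconn}(i). As $E$ contains $X$ we have $E\in\mathcal{F}$, and finiteness is immediate because $\mathcal{F}$ is a collection of subsets of the finite set $E$.

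Writing $\mathcal{F}=\{X_1,\dots,X_m\}$, the heart of the argument is an induction showing that for each $i\in\{1,\dots,m\}$ there is a fully-closed $k$-separating set $Y_i$ with $X\subseteq Y_i\subseteq X_1\cap\cdots\cap X_i$. The base case takes $Y_1=X_1$. For the inductive step, I would apply Lemma \ref{intsectfcl} to the two fully-closed $k$-separating sets $Y_i$ and $X_{i+1}$, both of which contain $X$; the lemma delivers a fully-closed $k$-separating set $Y_{i+1}$ with $X\subseteq Y_{i+1}\subseteq Y_i\cap X_{i+1}\subseteq X_1\cap\cdots\cap X_{i+1}$, completing the induction.

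Taking $i=m$ produces a fully-closed $k$-separating set $Y_m$ with $X\subseteq Y_m\subseteq\bigcap\mathcal{F}$. The closing observation, which I expect to be the one genuinely clever point rather than a routine calculation, is that $Y_m$ is itself a fully-closed $k$-separating set containing $X$, so $Y_m\in\mathcal{F}$, whence $\bigcap\mathcal{F}\subseteq Y_m$. Combined with $Y_m\subseteq\bigcap\mathcal{F}$, this forces $Y_m=\bigcap\mathcal{F}$, so $\bigcap\mathcal{F}$ is fully closed and $k$-separating and contains $X$, as required.

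The only real obstacle here is conceptual rather than computational: one must resist the temptation to verify that $\bigcap\mathcal{F}$ is fully closed directly, and instead realise that the induction manufactures a member of $\mathcal{F}$ sitting inside $\bigcap\mathcal{F}$, after which the minimality forced by intersection does the rest.
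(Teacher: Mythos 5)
Your proof is correct and takes essentially the approach the paper intends: the paper omits the proof, saying only that it ``follows from Lemma~\ref{intsectfcl} and a straightforward induction,'' and your argument---establishing $E\in\mathcal{F}$ (vacuously fully closed, $k$-separating since $\lambda(E)=\lambda(\emptyset)\leq\lambda(X)\leq k$, and $\mathcal{T}$-strong as a superset of $X$), inducting with Lemma~\ref{intsectfcl} to obtain a fully-closed $k$-separating set $Y_m$ with $X\subseteq Y_m\subseteq\bigcap\mathcal{F}$, and then observing that $Y_m\in\mathcal{F}$ forces $Y_m=\bigcap\mathcal{F}$---is precisely that induction, carried out cleanly. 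The final step, recognising that the induction manufactures a member of $\mathcal{F}$ inside $\bigcap\mathcal{F}$ rather than verifying full closure of the intersection directly, is exactly the right way to close the argument.
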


Let $\mathcal{T}$ be a tangle of order $k$ in a connectivity system $(E,\lambda)$, and let $X$ be a $\mathcal{T}$-strong $k$-separating set. Then the intersection of all fully-closed $k$-separating sets that contain $X$, which we denote by $\fcl_{\mathcal{T}}(X)$, is called the \textit{full closure of $X$ with respect to $\mathcal{T}$}. By Corollary \ref{fcldefn}, we see that $\fcl_{\mathcal{T}}(X)$ is minimal with respect to being a fully-closed $k$-separating set that contains $X$. We abbreviate ``full closure with respect to $\mathcal{T}$'' to ``full closure'' when the tangle $\mathcal{T}$ is clear from the context.

Let $\mathcal{T}$ be a tangle of order $k$ in a connectivity system $(E,\lambda)$. 
We omit the routine proof of the next lemma, which shows that the full closure 
with respect to $\mathcal{T}$ is a closure operator on the set of $\mathcal{T}$-strong 
$k$-separating sets in $\lambda$.

\begin{lemma}
\label{fcloperator}
Let $\mathcal{T}$ be a tangle of order $k$ in a connectivity system $(E,\lambda)$. Let $X$ and $Y$ be $\mathcal{T}$-strong $k$-separating sets. Then the following hold:  
\begin{enumerate}
 \item[(i)] $X\subseteq \fcl_{\mathcal{T}}(X)$.
 \item[(ii)] If $X\subseteq Y$, then $\fcl_{\mathcal{T}}(X)\subseteq \fcl_{\mathcal{T}}(Y)$.
 \item[(iii)] $\fcl_{\mathcal{T}}(\fcl_{\mathcal{T}}(X))=\fcl_{\mathcal{T}}(X)$. 
\end{enumerate}
\end{lemma}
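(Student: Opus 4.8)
The plan is to obtain all three parts directly from Corollary~\ref{fcldefn}, with no further uncrossing required; once two facts are isolated, the argument is purely set-theoretic. For a $\mathcal{T}$-strong $k$-separating set $Z$, write $\mathcal{F}_Z$ for the family of fully-closed $k$-separating sets that contain $Z$, so that $\fcl_{\mathcal{T}}(Z)=\bigcap \mathcal{F}_Z$ by definition. The two facts I would use repeatedly, both furnished by Corollary~\ref{fcldefn} and the minimality remark following it, are: (a) $\fcl_{\mathcal{T}}(Z)$ is itself a fully-closed $k$-separating set containing $Z$ (so in particular $\mathcal{F}_Z$ is nonempty and $\fcl_{\mathcal{T}}(Z)\in\mathcal{F}_Z$); and (b) $\fcl_{\mathcal{T}}(Z)\subseteq F$ for every fully-closed $k$-separating set $F$ with $Z\subseteq F$, since each such $F$ belongs to $\mathcal{F}_Z$ and an intersection is contained in each of its members.

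With these in hand, parts (i) and (ii) are short. Part (i) is immediate from (a). For (ii), suppose $X\subseteq Y$. Then $\fcl_{\mathcal{T}}(Y)$ is a fully-closed $k$-separating set containing $Y$, hence containing $X$, so applying (b) with $Z=X$ and $F=\fcl_{\mathcal{T}}(Y)$ gives $\fcl_{\mathcal{T}}(X)\subseteq \fcl_{\mathcal{T}}(Y)$.

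For part (iii) I would set $W=\fcl_{\mathcal{T}}(X)$ and first check that the outer full closure is even defined, i.e.\ that $W$ is $\mathcal{T}$-strong: this holds because $X\subseteq W$ by (i), $X$ is $\mathcal{T}$-strong, and supersets of $\mathcal{T}$-strong sets are $\mathcal{T}$-strong. Now $W$ is, by (a), a fully-closed $k$-separating set containing itself, so taking $Z=F=W$ in (b) yields $\fcl_{\mathcal{T}}(W)\subseteq W$; the reverse inclusion $W\subseteq \fcl_{\mathcal{T}}(W)$ is part (i). Hence $\fcl_{\mathcal{T}}(\fcl_{\mathcal{T}}(X))=W=\fcl_{\mathcal{T}}(X)$. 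There is no genuinely hard step here---the argument is the standard verification that ``the intersection of all closed supersets'' defines a closure operator---and the only points requiring care are the two preliminary checks: that $\mathcal{F}_Z$ is nonempty (so that the defining intersection is a bona fide member of the family rather than a vacuous one), which is exactly what Corollary~\ref{fcldefn} provides, and that $\fcl_{\mathcal{T}}(X)$ is $\mathcal{T}$-strong, so that the iterated full closure appearing in (iii) is meaningful.
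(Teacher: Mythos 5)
Your proof is correct, and it is precisely the routine verification the paper has in mind: the paper explicitly omits the proof of this lemma as a straightforward consequence of Corollary~\ref{fcldefn}, which is exactly the corollary you build on. Your two isolated facts (that $\fcl_{\mathcal{T}}(Z)$ is itself a fully-closed $k$-separating set containing $Z$, and that it is contained in every such set), together with the check that $\fcl_{\mathcal{T}}(X)$ is $\mathcal{T}$-strong so the iterated closure in (iii) is defined, constitute the intended argument.
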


Let $\mathcal{T}$ be a tangle of order $k$ in a connectivity system $(E,\lambda)$. 
Let $X$ and $Y$ be $\mathcal{T}$-strong $k$-separating sets in $\lambda$. 
We say that $X$ is $\mathcal{T}$-\textit{equivalent} to $Y$ if 
$\fcl_{\mathcal{T}}(X)= \fcl_{\mathcal{T}}(Y)$. It is easy to see that 
$\mathcal{T}$-equivalence is an equivalence relation on the set of 
$\mathcal{T}$-strong $k$-separating sets in $\lambda$. In what follows, we may suppress the tangle $\mathcal{T}$ and say that $X$ is ``equivalent'' to $Y$ when the tangle $\mathcal{T}$ is clear from the context.

Let $X$ be a $\mathcal{T}$-strong $k$-separating set in $\lambda$. A \textit{partial $k$-sequence for $X$} \index{partial $k$-sequence} is a sequence $(X_i)_{i=1}^m$ of pairwise disjoint, non-empty $\mathcal{T}$-weak subsets of $E-X$ such that $X\cup (\bigcup_{i=1}^j X_i)$ is $k$-separating for all $j\in [m]$.    

\begin{lemma}
\label{fclcontain}
Let $\mathcal{T}$ be a tangle of order $k$ in a connectivity system $(E,\lambda)$, and let $X$ be a $\mathcal{T}$-strong $k$-separating set in $\lambda$. If $(X_i)_{i=1}^m$ is a partial $k$-sequence for $X$, then  $X\cup (\bigcup_{i=1}^m X_i)\subseteq \fcl_{\mathcal{T}}(X)$.%$X\cup X_1\cup \cdots \cup X_m
\end{lemma}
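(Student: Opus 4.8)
The plan is to prove by induction on $j$ that $X\cup(\bigcup_{i=1}^{j}X_i)\subseteq \fcl_{\mathcal{T}}(X)$ for every $j\in\{0,1,\dots,m\}$; the case $j=m$ is the assertion. Write $F=\fcl_{\mathcal{T}}(X)$ and $Y_j=X\cup(\bigcup_{i=1}^{j}X_i)$, recalling that $F$ is a fully-closed $k$-separating set containing $X$ by Corollary \ref{fcldefn}. The base case $j=0$ is just $X\subseteq F$, which is Lemma \ref{fcloperator}(i). For the inductive step I would assume $Y_{j-1}\subseteq F$ and aim to show $X_j\subseteq F$, since this immediately gives $Y_j=Y_{j-1}\cup X_j\subseteq F$. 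If $F=E$ this is trivial, so I may assume $F$ is a proper $k$-separating subset of $E$; being fully closed, its complement $E-F$ is then $\mathcal{T}$-strong, exactly as observed after the definition of full closure.

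Set $Z=X_j\setminus F$ and suppose for contradiction that $Z\neq\emptyset$. Then $Z$ is a non-empty $\mathcal{T}$-weak subset of $E-F$, being a subset of the $\mathcal{T}$-weak set $X_j$, so by the full closure of $F$ the set $F\cup Z$ is not $k$-separating. Since $Y_{j-1}\subseteq F$, we have $F\cup Z=F\cup X_j=F\cup Y_j$, so it would suffice to show that $F\cup Y_j$ is $k$-separating in order to reach the desired contradiction. Both $F$ and $Y_j$ are $k$-separating (the latter by the defining property of a partial $k$-sequence), so I would obtain this by uncrossing $F$ and $Y_j$ via Lemma \ref{uncrossing2}(i), provided I can verify that $\lambda(F\cap Y_j)\geq k$.

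The key step is precisely this inequality, and it is where the structure of the tangle enters. The set $F\cap Y_j$ contains $Y_{j-1}\supseteq X$ and is therefore $\mathcal{T}$-strong, while its complement $E-(F\cap Y_j)$ contains $E-F$ and is therefore also $\mathcal{T}$-strong. Hence the partition $(F\cap Y_j,\,E-(F\cap Y_j))$ is $\mathcal{T}$-strong, and (T2) yields $\lambda(F\cap Y_j)\geq k$, as required. This completes the inductive step and hence the proof. I expect the only real care to lie in the bookkeeping identity $F\cup Z=F\cup Y_j$, which is what guarantees that the set produced by uncrossing is exactly the one forbidden by full closure, together with the use of $E-F$ being $\mathcal{T}$-strong; the uncrossing inequality itself falls straight out of (T2) once both sides of the relevant partition are seen to be $\mathcal{T}$-strong.
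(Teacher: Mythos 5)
Your proof is correct and is essentially the paper's own argument: the paper takes the smallest index $j$ with $X_j\not\subseteq \fcl_{\mathcal{T}}(X)$ (which is exactly your inductive hypothesis $Y_{j-1}\subseteq F$), establishes $\lambda(Y_j\cap F)\geq k$ via the same $\mathcal{T}$-strong partition and (T2), and uncrosses $Y_j$ with $F$ to contradict full closure. Your explicit bookkeeping with $Z=X_j\setminus F$ is a slightly more careful rendering of the paper's final step, but the argument is the same.
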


\begin{proof}
Suppose that $(X_i)_{i=1}^m$ is a partial $k$-sequence for $X$ such that $X\cup (\bigcup_{i=1}^m X_i)$ is not contained in $\fcl_{\mathcal{T}}(X)$. Then $\fcl_{\mathcal{T}}(X)\neq E$, so $E-\fcl_{\mathcal{T}}(X)$ is a $\mathcal{T}$-strong set. Let $j\in [m]$ be the smallest index such that $X_j$ is not contained in $\fcl_{\mathcal{T}}(X)$. Now, the partition $((X\cup (\bigcup_{i=1}^j X_i))\cap \fcl_{\mathcal{T}}(X), E-((X\cup (\bigcup_{i=1}^j X_i))\cap \fcl_{\mathcal{T}}(X)))$ of $E$ is $\mathcal{T}$-strong because $(X\cup (\bigcup_{i=1}^j X_i))\cap \fcl_{\mathcal{T}}(X)$ contains the $\mathcal{T}$-strong set $X$ while $E-((X\cup (\bigcup_{i=1}^j X_i))\cap \fcl_{\mathcal{T}}(X))$ contains the $\mathcal{T}$-strong set $E-\fcl_{\mathcal{T}}(X)$. Thus $\lambda((X\cup (\bigcup_{i=1}^j X_i))\cap \fcl_{\mathcal{T}}(X))\geq k$ by (T2). Then uncrossing the $k$-separating sets $X\cup (\bigcup_{i=1}^j X_i)$ and $\fcl_{\mathcal{T}}(X)$ we see that $\fcl_{\mathcal{T}}(X)\cup X_j$ is $k$-separating; a contradiction because $\fcl_{\mathcal{T}}(X)$ is fully closed.
\end{proof}

We have the following immediate corollary of Lemma \ref{fclcontain} and Lemma \ref{fcloperator}.

\begin{cor}
 \label{fclcontaincor}
Let $\mathcal{T}$ be a tangle of order $k$ in a connectivity system $(E,\lambda)$, and let $X$ be a $\mathcal{T}$-strong $k$-separating set in $\lambda$. If $(X_i)_{i=1}^m$ is a partial $k$-sequence for $X$, then $\fcl_{\mathcal{T}}(X\cup (\bigcup_{i=1}^m X_i))=\fcl_{\mathcal{T}}(X)$. 
\end{cor}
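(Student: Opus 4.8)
The plan is to exploit the fact, recorded in Lemma \ref{fcloperator}, that $\fcl_{\mathcal{T}}$ is a closure operator, and to sandwich $X\cup(\bigcup_{i=1}^m X_i)$ between $X$ and $\fcl_{\mathcal{T}}(X)$ using Lemma \ref{fclcontain}. Write $W=X\cup(\bigcup_{i=1}^m X_i)$. First I would check that $\fcl_{\mathcal{T}}(W)$ is even defined: taking $j=m$ in the definition of a partial $k$-sequence shows that $W$ is $k$-separating, and since $W\supseteq X$ and supersets of $\mathcal{T}$-strong sets are $\mathcal{T}$-strong, $W$ is a $\mathcal{T}$-strong $k$-separating set. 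Hence the full closure $\fcl_{\mathcal{T}}(W)$ makes sense. This verification---confirming that all three sets $X$, $W$, and $\fcl_{\mathcal{T}}(X)$ lie in the domain on which Lemma \ref{fcloperator} operates---is the only point requiring any care; the rest is formal.

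Next I would invoke Lemma \ref{fclcontain} to obtain the chain of inclusions $X\subseteq W\subseteq \fcl_{\mathcal{T}}(X)$. Applying monotonicity (Lemma \ref{fcloperator}(ii)) to the inclusion $X\subseteq W$ gives $\fcl_{\mathcal{T}}(X)\subseteq \fcl_{\mathcal{T}}(W)$, which is one of the two inclusions we need. Note that $\fcl_{\mathcal{T}}(X)$ is itself a fully-closed, and hence $\mathcal{T}$-strong, $k$-separating set by Corollary \ref{fcldefn}, so monotonicity may legitimately be applied with $\fcl_{\mathcal{T}}(X)$ as an argument.

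Finally, applying monotonicity to the inclusion $W\subseteq \fcl_{\mathcal{T}}(X)$ yields $\fcl_{\mathcal{T}}(W)\subseteq \fcl_{\mathcal{T}}(\fcl_{\mathcal{T}}(X))$, and idempotence (Lemma \ref{fcloperator}(iii)) collapses the right-hand side to $\fcl_{\mathcal{T}}(X)$, giving the reverse inclusion $\fcl_{\mathcal{T}}(W)\subseteq \fcl_{\mathcal{T}}(X)$. Combining the two inclusions gives $\fcl_{\mathcal{T}}(W)=\fcl_{\mathcal{T}}(X)$, as required. There is no genuine obstacle here beyond the domain check mentioned above; the argument is a routine ``squeeze'' for a monotone idempotent operator, which is exactly why the result is flagged as an immediate corollary.
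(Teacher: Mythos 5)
Your proof is correct and follows exactly the route the paper intends: the paper states Corollary \ref{fclcontaincor} as an immediate consequence of Lemma \ref{fclcontain} (which gives $X\subseteq X\cup(\bigcup_{i=1}^m X_i)\subseteq \fcl_{\mathcal{T}}(X)$) together with the closure-operator properties of Lemma \ref{fcloperator}, which is precisely your squeeze argument. Your additional check that $X\cup(\bigcup_{i=1}^m X_i)$ is a $\mathcal{T}$-strong $k$-separating set (so that its full closure is defined) is a sound piece of diligence that the paper leaves implicit.
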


Let $X$ be a $\mathcal{T}$-strong $k$-separating set in $\lambda$. Let $$P=\textstyle{\{X\cup (\bigcup_{i=1}^m X_i)\ |\ (X_i)_{i=1}^m \text{ is a partial $k$-sequence for $X$ }\}}.$$ Then it is easy to see that $(P,\subseteq)$ is a poset. A partial $k$-sequence $(X_i)_{i=1}^m$ for $X$ is said to be \textit{maximal} if $X\cup (\bigcup_{i=1}^m X_i)$ is maximal in the poset $(P,\subseteq)$. We next characterise the full closure in terms of partial $k$-sequences.

\begin{lemma}
\label{maxsequence}
Let $\mathcal{T}$ be a tangle of order $k$ in a connectivity system $(E,\lambda)$. Let $X$ be a $\mathcal{T}$-strong $k$-separating set, and let $(X_i)_{i=1}^m$ be a partial $k$-sequence for $X$. Then $\fcl_{\mathcal{T}}(X)=X\cup (\bigcup_{i=1}^m X_i)$ if and only if $(X_i)_{i=1}^m$ is maximal.
\end{lemma}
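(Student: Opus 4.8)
The plan is to deduce both implications from Lemma \ref{fclcontain}, which already guarantees that the union arising from any partial $k$-sequence for $X$ lies inside $\fcl_{\mathcal{T}}(X)$. Throughout, write $W=X\cup(\bigcup_{i=1}^m X_i)$. Note that $W\in P$ by definition, that $W$ is $k$-separating (it is the final initial union of the sequence), and that $W$ is $\mathcal{T}$-strong because it contains the $\mathcal{T}$-strong set $X$; in particular the notion of ``fully closed'' applies to $W$.

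For the ``only if'' direction, suppose $\fcl_{\mathcal{T}}(X)=W$. I would argue directly that $W$ is maximal in $(P,\subseteq)$. Indeed, every member of $P$ has the form $X\cup(\bigcup_i X_i')$ for some partial $k$-sequence $(X_i')$, and is therefore contained in $\fcl_{\mathcal{T}}(X)=W$ by Lemma \ref{fclcontain}. Hence no element of $P$ properly contains $W$, so $(X_i)_{i=1}^m$ is maximal.

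For the ``if'' direction, suppose $(X_i)_{i=1}^m$ is maximal. Since Lemma \ref{fclcontain} gives $W\subseteq\fcl_{\mathcal{T}}(X)$, it suffices to prove the reverse inclusion, and for this I would show that $W$ is itself fully closed. Granting that, $W$ is a fully-closed $k$-separating set containing $X$, so $W$ lies in the family $\mathcal{F}$ of Corollary \ref{fcldefn}, whence $\fcl_{\mathcal{T}}(X)=\bigcap\mathcal{F}\subseteq W$ and equality follows. To see that $W$ is fully closed, I would argue by contradiction: if there were a non-empty $\mathcal{T}$-weak set $Y\subseteq E-W$ with $W\cup Y$ $k$-separating, then appending $Y$ yields the sequence $(X_1,\dots,X_m,Y)$. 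This is again a partial $k$-sequence for $X$, since $Y$ is non-empty and $\mathcal{T}$-weak, and $Y\subseteq E-W$ forces $Y$ to be disjoint from $X$ and from every $X_i$, while the only new initial union $W\cup Y$ is $k$-separating by assumption. But then $W\cup Y\in P$ properly contains $W$, contradicting the maximality of $(X_i)_{i=1}^m$.

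The argument is essentially bookkeeping layered on Lemma \ref{fclcontain}, and I do not expect either implication to present a real obstacle. The one step deserving care is the construction in the ``if'' direction, where one must check that appending the witnessing set $Y$ preserves all three defining conditions of a partial $k$-sequence; because $Y$ lies in the complement of $W$, the pairwise-disjointness and initial-union conditions are immediate, and $\mathcal{T}$-weakness is part of the hypothesis, so the verification is routine.
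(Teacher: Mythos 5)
Your proof is correct and follows essentially the same route as the paper's: the ``only if'' direction is immediate from Lemma \ref{fclcontain}, and the ``if'' direction shows $X\cup(\bigcup_{i=1}^m X_i)$ is fully closed by appending any witnessing $\mathcal{T}$-weak set to the sequence and contradicting maximality, then invokes minimality of the full closure. The extra bookkeeping you supply (checking the appended sequence is a valid partial $k$-sequence, and that $W$ is $\mathcal{T}$-strong so the fully-closed notion applies) is exactly what the paper leaves implicit.
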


\begin{proof}
Assume that $\fcl_{\mathcal{T}}(X)=X\cup (\bigcup_{i=1}^m X_i)$. Then $(X_i)_{i=1}^m$ is maximal by Lemma \ref{fclcontain}. Conversely, assume that $(X_i)_{i=1}^m$ is maximal. Then it follows from Lemma \ref{fclcontain} that $X\cup (\bigcup_{i=1}^m X_i)\subseteq \fcl_{\mathcal{T}}(X)$. We claim that $X\cup (\bigcup_{i=1}^m X_i)$ is fully closed. Suppose that $X\cup (\bigcup_{i=1}^m X_i)$ is not fully closed. Then there is a non-empty $\mathcal{T}$-weak subset $X_{m+1}$ of $E-(X\cup (\bigcup_{i=1}^m X_i))$ such that $X\cup (\bigcup_{i=1}^{m+1} X_i)$ is $k$-separating. Hence $(X_i)_{i=1}^{m+1}$ is a partial $k$-sequence, and $X\cup (\bigcup_{i=1}^m X_i)\subsetneq X\cup (\bigcup_{i=1}^{m+1} X_i)$; a contradiction because $(X_i)_{i=1}^m$ is maximal. Thus $X\cup (\bigcup_{i=1}^m X_i)$ is a fully-closed $k$-separating set that contains $X$, so $\fcl_{\mathcal{T}}(X)\subseteq X\cup (\bigcup_{i=1}^m X_i)$. Therefore $\fcl_{\mathcal{T}}(X)=X\cup (\bigcup_{i=1}^m X_i)$, as required. 
\end{proof}

%We shall abbreviate ``maximal partial $k$-sequence'' to ``$k$-sequence''.

We can extend the relation of $\mathcal{T}$-equivalence to the set of 
$\mathcal{T}$-strong $k$-separations of $\lambda$ in the natural way. 
Let $(X,Y)$ and $(X',Y')$ be $\mathcal{T}$-strong $k$-separations of $\lambda$. 
Then $(X,Y)$ is $\mathcal{T}$-\textit{equivalent} to $(X',Y')$ if 
$\{\fcl_{\mathcal{T}}(X),\fcl_{\mathcal{T}}(Y)\}= 
\{\fcl_{\mathcal{T}}(X'),\fcl_{\mathcal{T}}(Y')\}$. 
It is easy to see that $\mathcal{T}$-equivalence is an equivalence relation 
on the set of $\mathcal{T}$-strong $k$-separations of $\lambda$.  When the tangle 
$\mathcal{T}$ is clear from the context, we shall abbreviate 
``$\mathcal{T}$-equivalent'' to ``equivalent''.

Let $X$ be a $k$-separating set in $\lambda$. We say that $X$ is $\mathcal{T}$-\textit{sequential} if $E-X$ is $\mathcal{T}$-strong and $\fcl_{\mathcal{T}}(E-X)=E$. A $k$-separation $(X,Y)$ is $\mathcal{T}$-\textit{sequential} if $X$ or $Y$ is a $\mathcal{T}$-sequential $k$-separating set. When the tangle $\mathcal{T}$ is clear from the context, we shall use ``sequential'' and ``non-sequential'' instead of ``$\mathcal{T}$-sequential'' and ``not $\mathcal{T}$-sequential'' respectively. It is clear that every non-sequential $k$-separation must be a $\mathcal{T}$-strong $k$-separation. 

The remainder of this section is devoted to developing some useful lemmas about 
$k$-separations of a connectivity function $\lambda$ that are 
$\mathcal{T}$-equivalent with respect to a tangle $\mathcal{T}$ of order $k$.

The following lemma provides an economical test of equivalence for 
non-sequential $k$-separations.

\begin{lemma}
 \label{3.3}
Let $\mathcal{T}$ be a tangle of order $k$ in a connectivity system $(E,\lambda)$, and let $(A, B)$ and $(C, D)$ be two non-sequential $k$-separations of $\lambda$. Then $(A, B)$ is $\mathcal{T}$-equivalent to $(C, D)$ if and only if either $\fcl_{\mathcal{T}}(A)= \fcl_{\mathcal{T}}(C)$ or $\fcl_{\mathcal{T}}(A)= \fcl_{\mathcal{T}}(D)$.
\end{lemma}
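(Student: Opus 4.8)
The plan is to dispose of the forward implication immediately and to concentrate all the work on the converse, which I would reduce to a single structural identity. If $(A,B)$ is $\mathcal{T}$-equivalent to $(C,D)$, then by definition $\{\fcl_{\mathcal{T}}(A),\fcl_{\mathcal{T}}(B)\}=\{\fcl_{\mathcal{T}}(C),\fcl_{\mathcal{T}}(D)\}$, so in particular $\fcl_{\mathcal{T}}(A)$ equals $\fcl_{\mathcal{T}}(C)$ or $\fcl_{\mathcal{T}}(D)$, which is exactly the stated condition. For the converse my plan rests on the identity that, for any non-sequential $k$-separation $(A,B)$, one has $\fcl_{\mathcal{T}}(B)=\fcl_{\mathcal{T}}(E-\fcl_{\mathcal{T}}(A))$. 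Granting this, the converse is quick: assuming without loss of generality (swapping $C$ and $D$ if necessary) that $\fcl_{\mathcal{T}}(A)=\fcl_{\mathcal{T}}(C)=:F$, I would apply the identity to both separations to get $\fcl_{\mathcal{T}}(B)=\fcl_{\mathcal{T}}(E-F)=\fcl_{\mathcal{T}}(D)$, whence $\{\fcl_{\mathcal{T}}(A),\fcl_{\mathcal{T}}(B)\}=\{\fcl_{\mathcal{T}}(C),\fcl_{\mathcal{T}}(D)\}$, as required.

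To prove the identity I would first record that non-sequentiality of $(A,B)$ means precisely that $\fcl_{\mathcal{T}}(A)\neq E$ and $\fcl_{\mathcal{T}}(B)\neq E$. Writing $F=\fcl_{\mathcal{T}}(A)$, the set $E-F$ is then a non-empty $\mathcal{T}$-strong $k$-separating set: it is $k$-separating because $\lambda(E-F)=\lambda(F)$ by symmetry, and it is $\mathcal{T}$-strong because $F$ is a proper fully-closed set (the observation following the definition of full closure). Using Lemma \ref{maxsequence} I would write $F=A\cup(\bigcup_{i=1}^{m}X_i)$ for a maximal partial $k$-sequence $(X_i)_{i=1}^{m}$ for $A$.

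The heart of the argument is a reversal trick exploiting the symmetry of $\lambda$. For each $j$, the set $A\cup(\bigcup_{i=1}^{j}X_i)$ is $k$-separating if and only if its complement $(E-F)\cup(\bigcup_{i=j+1}^{m}X_i)$ is $k$-separating. Reading these complements in order shows that the reversed sequence $(X_m,X_{m-1},\ldots,X_1)$ is a partial $k$-sequence for $E-F$: the $X_i$ are pairwise disjoint non-empty $\mathcal{T}$-weak subsets of $F=E-(E-F)$, and each partial union $(E-F)\cup X_m\cup\cdots\cup X_{m-t+1}$ is the complement of $A\cup X_1\cup\cdots\cup X_{m-t}$ and hence $k$-separating. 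Its total union is $(E-F)\cup(\bigcup_{i=1}^{m}X_i)=(E-F)\cup(F\setminus A)=E-A=B$. Corollary \ref{fclcontaincor}, applied with base set $E-F$, then gives $\fcl_{\mathcal{T}}(B)=\fcl_{\mathcal{T}}(E-F)$, which is the identity.

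I expect the bookkeeping in the reversal step to be the only genuine obstacle: one must verify carefully, using $A\subseteq F$ and the pairwise disjointness of the $X_i$, that $A\cup X_1\cup\cdots\cup X_{m-t}$ and $(E-F)\cup X_m\cup\cdots\cup X_{m-t+1}$ are genuinely complementary, and that $E-F$ meets the hypotheses of Corollary \ref{fclcontaincor}. Everything else is a direct appeal to the symmetry of $\lambda$, to Lemma \ref{maxsequence}, and to Corollary \ref{fclcontaincor}, together with the fact that a proper fully-closed set has $\mathcal{T}$-strong complement.
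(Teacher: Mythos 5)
Your proposal is correct and follows essentially the same route as the paper: both hinge on reversing a maximal partial $k$-sequence for $A$ (obtained via Lemma \ref{maxsequence}) to produce a partial $k$-sequence for $E-\fcl_{\mathcal{T}}(A)$, and then invoking Lemma \ref{fclcontain}/Corollary \ref{fclcontaincor} together with Lemma \ref{fcloperator} to conclude $\fcl_{\mathcal{T}}(B)=\fcl_{\mathcal{T}}(E-\fcl_{\mathcal{T}}(A))=\fcl_{\mathcal{T}}(D)$. Packaging the reversal as the identity $\fcl_{\mathcal{T}}(B)=\fcl_{\mathcal{T}}(E-\fcl_{\mathcal{T}}(A))$ and applying it to each separation is only a cosmetic reorganization of the paper's argument, which runs the same reversal on the sequences for $A$ and $C$ in parallel.
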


\begin{proof}
In one direction the lemma is trivial. For the other direction, assume that $\fcl_{\mathcal{T}}(A)= \fcl_{\mathcal{T}}(C)=Y$. Set $X=E-Y$. Then $X$ is a $\mathcal{T}$-strong $k$-separating set because $(A,B)$ is a non-sequential $k$-separation. Let $(A_i)_{i=1}^m$ be a maximal partial $k$-sequence for $A$ and $(C_i)_{i=1}^n$ be a maximal partial $k$-sequence for $C$. Then it follows from Lemma \ref{maxsequence} that $A\cup (\bigcup_{i=1}^m A_i)=\fcl_{\mathcal{T}}(A)$ and $C\cup (\bigcup_{i=1}^n C_i) =\fcl_{\mathcal{T}}(C)$. Then $(A_{m-i+1})_{i=1}^m$ and $(C_{n-i+1})_{i=1}^n$ are partial $k$-sequences for $X$, so $B=X\cup (\bigcup_{i=1}^m A_{m-i+1})\subseteq \fcl_{\mathcal{T}}(X)$ and $D=X\cup (\bigcup_{i=1}^n C_{n-i+1})\subseteq \fcl_{\mathcal{T}}(X)$ by Lemma \ref{fclcontain}. By Lemma \ref{fcloperator}, and since $X$ is a subset of both $B$ and $D$, we have $\fcl_{\mathcal{T}}(B)= \fcl_{\mathcal{T}}(X)= \fcl_{\mathcal{T}}(D)$, so $(A,B)$ is indeed equivalent to $(C,D)$. 
\end{proof}

The following lemma contains some elementary results about $\mathcal{T}$-equivalence 
of $\mathcal{T}$-strong $k$-separations. It is used frequently.

\begin{lemma}
\label{bigeqlemma}
Let $\mathcal{T}$ be a tangle of order $k$ in a connectivity system $(E,\lambda)$, and let $(R,G)$ be a $\mathcal{T}$-strong $k$-separation of $\lambda$.
\begin{enumerate}
\item[(i)] If $A\subseteq G$ is a non-empty $\mathcal{T}$-weak set such that $R\cup A$ is $k$-separating, and $G-A$ is $\mathcal{T}$-strong, then $(R,G)$ is $\mathcal{T}$-equivalent to $(R\cup A, G-A)$.
\item[(ii)]If $(R,G)$ is a non-sequential $k$-separation, and $A\subseteq G$ is a non-empty $\mathcal{T}$-weak set such that $R\cup A$ is $k$-separating, then $(R\cup A, G-A)$ is $\mathcal{T}$-equivalent to $(R,G)$.
\item[(iii)] If $(R,G)$ is a non-sequential $k$-separation, then $(\fcl_{\mathcal{T}}(R), E-\fcl_{\mathcal{T}}(R))$ is $\mathcal{T}$-equivalent to $(R,G)$.  
\item[(iv)] If $(R,G)$ is a non-sequential $k$-separation, and $X$ is a $k$-separating set such that $E-\fcl_{\mathcal{T}}(G)\subseteq X\subseteq R$, then $(X,E-X)$ is $\mathcal{T}$-equivalent to $(R,G)$.
\end{enumerate}
\end{lemma}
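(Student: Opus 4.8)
The plan is to prove the four parts in order, using (i) as a computational base that (ii)--(iv) bootstrap from via the full-closure machinery. For (i), I would note that the one-term sequence $(A)$ is a partial $k$-sequence for $R$ (since $A$ is a non-empty $\mathcal{T}$-weak subset of $E-R=G$ with $R\cup A$ $k$-separating) and, symmetrically, a partial $k$-sequence for $G-A$ (since $A\subseteq E-(G-A)$ and $(G-A)\cup A=G$ is $k$-separating). Two applications of Corollary \ref{fclcontaincor} then give $\fcl_{\mathcal{T}}(R\cup A)=\fcl_{\mathcal{T}}(R)$ and $\fcl_{\mathcal{T}}(G-A)=\fcl_{\mathcal{T}}(G)$; as $(R\cup A, G-A)$ is a $\mathcal{T}$-strong $k$-separation (both parts being $\mathcal{T}$-strong), this is exactly the required equivalence. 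For (ii) the only extra task is to deduce that $G-A$ is $\mathcal{T}$-strong, after which (i) applies. I would argue by contradiction: if $G-A$ were $\mathcal{T}$-weak then, using $\lambda(E)=\lambda(\emptyset)\leq\lambda(R)\leq k$ from Lemma \ref{elemconn}(i) to see that $E$ is $k$-separating, the pair $(A,G-A)$ (both non-empty, with $G-A\neq\emptyset$ because $G$ is $\mathcal{T}$-strong; disjoint; and $\mathcal{T}$-weak) would be a partial $k$-sequence for $R$ with union $E$. Lemma \ref{fclcontain} then forces $\fcl_{\mathcal{T}}(R)=E$, making $G$ a $\mathcal{T}$-sequential set and contradicting non-sequentiality of $(R,G)$.

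For (iii) I would first record the stability fact that $\mathcal{T}$-equivalence preserves non-sequentiality: for a $\mathcal{T}$-strong $k$-separation, non-sequentiality amounts to $\fcl_{\mathcal{T}}(R)\neq E\neq\fcl_{\mathcal{T}}(G)$, a property determined by the unordered pair $\{\fcl_{\mathcal{T}}(R),\fcl_{\mathcal{T}}(G)\}$. Choosing a maximal partial $k$-sequence $(R_i)_{i=1}^m$ for $R$, Lemma \ref{maxsequence} gives $\fcl_{\mathcal{T}}(R)=R\cup\bigcup_{i=1}^m R_i$, and I would apply (ii) repeatedly along the chain $(R,G)\to(R\cup R_1, G-R_1)\to\cdots\to(\fcl_{\mathcal{T}}(R), E-\fcl_{\mathcal{T}}(R))$. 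At each stage the current separation is non-sequential by the stability fact, the next term lies in the current $G$-side, and the new $R$-side is $k$-separating as an initial segment of the partial $k$-sequence, so (ii) applies; transitivity of equivalence closes the chain.

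For (iv) I would use monotonicity and idempotence of $\fcl_{\mathcal{T}}$ (Lemma \ref{fcloperator}(ii),(iii)). Put $W=E-\fcl_{\mathcal{T}}(G)$; non-sequentiality gives $\fcl_{\mathcal{T}}(G)\neq E$, so $W$ is a $\mathcal{T}$-strong $k$-separating set, and applying (iii) to the (same) non-sequential separation $(G,R)$ yields $\{\fcl_{\mathcal{T}}(G),\fcl_{\mathcal{T}}(W)\}=\{\fcl_{\mathcal{T}}(G),\fcl_{\mathcal{T}}(R)\}$, forcing $\fcl_{\mathcal{T}}(W)=\fcl_{\mathcal{T}}(R)$. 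From $W\subseteq X\subseteq R$ monotonicity squeezes $\fcl_{\mathcal{T}}(X)$ between $\fcl_{\mathcal{T}}(W)=\fcl_{\mathcal{T}}(R)$ and $\fcl_{\mathcal{T}}(R)$, giving $\fcl_{\mathcal{T}}(X)=\fcl_{\mathcal{T}}(R)$; dually $X\subseteq R$ gives $G\subseteq E-X\subseteq\fcl_{\mathcal{T}}(G)$, whence $\fcl_{\mathcal{T}}(E-X)=\fcl_{\mathcal{T}}(G)$, and the equivalence with $(R,G)$ follows.

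I expect the main obstacle to be part (ii): showing that $G-A$ is $\mathcal{T}$-strong is where the non-sequentiality hypothesis does its essential work, and the key realization is that $E$ itself is $k$-separating, so a partial $k$-sequence can exhaust $E$ and collapse $\fcl_{\mathcal{T}}(R)$ to $E$. The only other delicate point is the stability fact used to keep the induction in (iii) running; with these in hand, (iii) and (iv) reduce to bookkeeping with the closure operator.
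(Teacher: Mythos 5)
Your proposal is correct and follows essentially the same route as the paper's own proof: part (i) via the one-term partial $k$-sequence $(A)$ and Corollary \ref{fclcontaincor}, part (ii) by the contradiction that $(A,G-A)$ would be a partial $k$-sequence for $R$ forcing $\fcl_{\mathcal{T}}(R)=E$, part (iii) by iterating (ii) along a maximal partial $k$-sequence with Lemma \ref{maxsequence}, and part (iv) by monotonicity and idempotence of the full closure. The only difference is that you make explicit two points the paper leaves implicit (that $E$ is $k$-separating, and that $\mathcal{T}$-equivalence preserves non-sequentiality, which keeps the induction in (iii) running), which is a welcome tightening rather than a deviation.
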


\begin{proof}
For (i), suppose that $A\subseteq G$ is a non-empty $\mathcal{T}$-weak set such that $R\cup A$ is $k$-separating, and that $G-A$ is $\mathcal{T}$-strong. Then $(A)$ is a partial $k$-sequence for both $R$ and $G-A$, so  $\fcl_{\mathcal{T}}(R)=\fcl_{\mathcal{T}}(R\cup A)$ and $\fcl_{\mathcal{T}}(G-A)=\fcl_{\mathcal{T}}(G)$ by Corollary \ref{fclcontaincor}.

To prove (ii), we show that $G-A$ is $\mathcal{T}$-strong and then apply (i). Assume towards a contradiction that $G-A$ is $\mathcal{T}$-weak. Then $(A,G-A)$ is a partial $k$-sequence for $R$, so it follows from Lemma \ref{fclcontain} that $\fcl_{\mathcal{T}}(R)=E$; a contradiction because $(R,G)$ is non-sequential.

For (iii), suppose that $(R_i)_{i=1}^m$ is a partial $k$-sequence for $R$. Then it follows from (ii) and induction that $(R\cup (\bigcup_{i=1}^m R_i), G-(\bigcup_{i=1}^m R_i))$ is $\mathcal{T}$-equivalent to $(R,G)$. In particular, if $(R_i)_{i=1}^m$ is a maximal partial $k$-sequence, then $\fcl_{\mathcal{T}}(R)= R\cup (\bigcup_{i=1}^m R_i)$ by Lemma \ref{maxsequence}, so $(\fcl_{\mathcal{T}}(R), E-\fcl_{\mathcal{T}}(R))$ is $\mathcal{T}$-equivalent to $(R,G)$. 

To prove (iv), we first observe that $\fcl_{\mathcal{T}}(E-\fcl_{\mathcal{T}}(G))=\fcl_{\mathcal{T}}(R)$ by (iii). Thus from $E-\fcl_{\mathcal{T}}(G)\subseteq X\subseteq R$ and Lemma \ref{fcloperator}, it follows that $\fcl_{\mathcal{T}}(X)=\fcl_{\mathcal{T}}(R)$. Furthermore, from the fact that $E-\fcl_{\mathcal{T}}(G)\subseteq X\subseteq R$ it follows that $G\subseteq E-X\subseteq \fcl_{\mathcal{T}}(G)$, so $\fcl_{\mathcal{T}}(E-X)=\fcl_{\mathcal{T}}(G)$ by Lemma \ref{fcloperator}.
\end{proof}

Our primary aim is to display all non-sequential $k$-separations in a tree structure, but we can work with a more specific collection of non-sequential separations with little additional cost.

Let $\mathcal{S}$ be a set of non-sequential $k$-separating sets in $\lambda$ with $\mathcal{T}$-strong complements, and let $(X,E-X)$ be a $k$-separation of $\lambda$. We say that $(X,E-X)$ is a $(k, \mathcal{S})$-\textit{separation} if $X, E-X\in \mathcal{S}$. The set $\mathcal{S}$ is said to be \textit{tree compatible} if the following hold:

\begin{enumerate}
 \item[(S1)] If $(X,E-X)$ is a $(k, \mathcal{S})$-separation and $(Y,E-Y)$ is a $\mathcal{T}$-strong $k$-separation that is $\mathcal{T}$-equivalent to $(X,E-X)$, then $(Y,E-Y)$ is a $(k, \mathcal{S})$-separation.
 \item[(S2)] If $X\in \mathcal{S}$, and $(Y,E-Y)$ is a $\mathcal{T}$-strong $k$-separation such that $X\subseteq Y$, then $Y\in \mathcal{S}$.
\end{enumerate}

For example, it is not difficult to see that the set of all non-sequential $k$-separating sets in $\lambda$ with $\mathcal{T}$-strong complements is tree compatible. In this case, a $(k, \mathcal{S})$-separation is simply a non-sequential $k$-separation. 

We frequently make use of the fact that a $(k, \mathcal{S})$-separation is, in particular, a non-sequential $k$-separation.

\subsection*{Vertically $k$-connected matroids} Let $M$ be a vertically $k$-connected
matroid whose rank is at least $\max\{3k-5,2\}$. Recall that $\tangle_k$ denotes the
unique tangle of order $k$ in $M$. Recall that a set $X$ is $\tangle_k$ strong if 
$r(X)\geq k-1$. Thus any exactly $k$-separating set is $\tangle_k$-strong. If 
$X$ is exactly $k$-separating, then $X$ is fully closed relative to $\tangle_k$
if and only if $\lambda(X\cup Y)>k$ for every subset $Y$ of $E(M)-X$ with $r(Y)\leq k-2$. 
A partial $k$-sequence for $X$ is a sequence $(X_i)_{i=1}^m$ of pairwise disjoint,
non-empty subsets such that $r(X_j)\leq k-2$ and $X\cup(\bigcup_{i=1}^j X_i)$ is 
$k$-separating for all $j\in[m]$. The same interpretations apply when $M$ is strictly
$k$-connected except that we replace rank by cardinality in the statements. This leads to 
a notion of equivalence for $k$-separating sets and $k$-separations
in vertically $k$-connected matroids
or strictly $k$-connected matroids.  When $k=3$ and $M$ is $3$-connected, 
this notions of equivalence is precisely
the equivalence of $3$-separations 
defined by Oxley, Semple, and Whittle \cite{oxley2004structure}. When $k=4$ and $M$
is 4-connected it is precisely the equivalence 
of $4$-separations defined by Aikin and Oxley \cite{aikin2011structure}.
All of the lemmas of this section now have obvious 
specialisation when interpreted for $\tangle_k$.

\section{Flowers in a tangle}
\label{flowtang}

Let $\mathcal{T}$ be a tangle of order $k$ in a connectivity system $(E,\lambda)$, and let $n$ be a positive integer. A $\mathcal{T}$-strong partition $(P_1,\ldots, P_n)$ of $E$ is a \textit{$k$-flower} in $\mathcal{T}$ with \textit{petals} $P_1,\ldots, P_n$ if, for all $i$, both $P_i$ and $P_i\cup P_{i+1}$ are $k$-separating sets, where all subscripts are interpreted modulo $n$. 

We next define some of the fundamental notions for $k$-flowers in $\mathcal{T}$. Most of these are natural extensions of the analogous notions for flowers in $3$-connected matroids given in \cite{oxley2004structure}.

Let $\mathcal{T}$ be a tangle of order $k$ in a connectivity system $(E,\lambda)$, and let $\Phi=(P_1,\ldots, P_n)$ be a $k$-flower in $\mathcal{T}$. A $k$-separating set $X$ or $k$-separation $(X,E-X)$ is said to be \textit{displayed} by $\Phi$ if $X$ is a union of petals of $\Phi$. For a  non-empty subset $I$ of $[n]$, we write $P_I$ for $\bigcup_{i\in I} P_i$. A $k$-flower $\Phi=(P_1,\ldots, P_n)$ in $\mathcal{T}$ is called a \textit{$k$-anemone} if $P_I$ is $k$-separating for any non-empty subset $I$ of $[n]$, and a \textit{$k$-daisy} if $P_I$ is $k$-separating for precisely those non-empty subsets $I$ of $[n]$ whose members form a consecutive set in the cyclic order $(1,\ldots, n)$. As every $k$-flower in $\mathcal{T}$ is a $k$-flower in the connectivity function $\lambda$ (see \cite{aikin2008structure}), we have the following immediate consequence of \cite[Theorem 1.1]{aikin2008structure}.

\begin{cor}
\label{flowerclass}
Every $k$-flower in $\mathcal{T}$ is either a $k$-anemone or a $k$-daisy.
\end{cor}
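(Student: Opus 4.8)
The plan is to reduce the corollary to the classification theorem for flowers in a connectivity function, namely \cite[Theorem 1.1]{aikin2008structure}, which asserts that every $k$-flower in $\lambda$ is either a $k$-anemone or a $k$-daisy. Since this dichotomy is already available for flowers in $\lambda$, the only substantive work is to confirm that a $k$-flower in $\mathcal{T}$ is a special instance of a $k$-flower in $\lambda$, and that the accompanying definitions of anemone and daisy transfer verbatim.

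First I would recall the definition of a $k$-flower in the connectivity function $\lambda$: a partition $(P_1,\ldots,P_n)$ of $E$ in which each petal $P_i$ and each union $P_i\cup P_{i+1}$ of two consecutive petals is $k$-separating, with subscripts read modulo $n$, but with no requirement that the partition be $\mathcal{T}$-strong. Comparing this with the definition of a $k$-flower in $\mathcal{T}$ given above, the two differ only in that the latter additionally demands the partition be $\mathcal{T}$-strong. Hence, after forgetting the tangle, every $k$-flower in $\mathcal{T}$ is a $k$-flower in $\lambda$; this is precisely the observation attributed to \cite{aikin2008structure} in the sentence preceding the corollary.

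Next I would note that the definitions of $k$-anemone and $k$-daisy used here are phrased purely in terms of which sets $P_I$ are $k$-separating, and so refer only to $\lambda$ and not to $\mathcal{T}$. An anemone is a flower in which $P_I$ is $k$-separating for every non-empty $I\subseteq[n]$, and a daisy is one in which $P_I$ is $k$-separating exactly for the non-empty consecutive sets $I$; these coincide with the corresponding notions for a flower in $\lambda$. Applying \cite[Theorem 1.1]{aikin2008structure} to the flower in $\lambda$ produced in the previous step then immediately yields that it is a $k$-anemone or a $k$-daisy, as required.

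The only point requiring genuine care is the matching of definitions: one must confirm that our notion of a flower really is subsumed by Aikin's (in particular that no auxiliary hypotheses in Aikin's definition are being silently invoked), and that Aikin's classification is an exhaustive dichotomy with no intermediate cases left uncaptured by the anemone/daisy definitions stated here. Once these conventions are reconciled—which is exactly why the result is recorded as a corollary rather than proved afresh—the statement follows with no further argument.
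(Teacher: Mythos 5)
Your proposal is correct and matches the paper's own (implicit) argument exactly: the paper justifies the corollary in the sentence preceding it, observing that every $k$-flower in $\mathcal{T}$ is a $k$-flower in the connectivity function $\lambda$ and then invoking \cite[Theorem 1.1]{aikin2008structure}. Your additional care in checking that the anemone/daisy definitions depend only on $\lambda$ and not on $\mathcal{T}$ is precisely the point that makes the reduction immediate.
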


Let $\mathcal{T}$ be a tangle of order $k$ in a connectivity system $(E,\lambda)$, and let $\mathcal{S}$ be a tree compatible set. We define a relation $\preccurlyeq_{\mathcal{S}}$ on the set of $k$-flowers in $\mathcal{T}$ as follows. Let $\Phi_1$ and $\Phi_2$ be $k$-flowers in $\mathcal{T}$. We say that $\Phi_1 \preccurlyeq_{\mathcal{S}} \Phi_2$ if, for every $(k, \mathcal{S})$-separation displayed by $\Phi_1$, there is some $\mathcal{T}$-equivalent $(k, \mathcal{S})$-separation displayed by $\Phi_2$. It is straightforward to verify that the relation $\preccurlyeq_{\mathcal{S}}$ is a quasi-order on the set of $k$-flowers in $\mathcal{T}$. If $\Phi_1 \preccurlyeq_{\mathcal{S}} \Phi_2$ and $\Phi_2 \preccurlyeq_{\mathcal{S}} \Phi_1$, we say that $\Phi_1$ and $\Phi_2$ are \textit{$\mathcal{T}$-equivalent} $k$-flowers with respect to $\mathcal{S}$. Thus, $k$-flowers that are $\mathcal{T}$-equivalent with respect to $\mathcal{S}$ display, up to $\mathcal{T}$-equivalence of $k$-separations, exactly the same $(k, \mathcal{S})$-separations of $\lambda$. Note that when the tangle $\mathcal{T}$ and the set $\mathcal{S}$ are clear from the context, we shall abbreviate ``$\mathcal{T}$-equivalent with respect to $\mathcal{S}$'' to ``equivalent''.

Let $\Phi=(P_1,\ldots, P_n)$ be a $k$-flower in $\mathcal{T}$, and let $\mathcal{S}$ be a tree compatible set. If $\Phi$ is a $k$-anemone and $\sigma$ is an arbitrary permutation of the set $[n]$, then it is easy to see that $\Phi '=(P_{\sigma(1)},\ldots, P_{\sigma(n)})$ is a $k$-flower in $\mathcal{T}$ that is equivalent to $\Phi$. Similarly, if $\Phi=(P_1,\ldots, P_n)$ is a $k$-daisy and $\sigma$ is a permutation of the set $[n]$ that corresponds to a symmetry of a regular $n$-gon, then it is easy to see that $\Phi '=(P_{\sigma(1)},\ldots, P_{\sigma(n)})$ is a $k$-flower in $\mathcal{T}$ that is equivalent to $\Phi$. We say that $\Phi$ and $\Phi '$ are \textit{equal up to labels}\index{up to labels}. We will often use the phrase ``up to labels'' to mean ``by an appropriate permutation of the petals''.

We now describe a fundamental method of obtaining new $k$-flowers in $\mathcal{T}$ from old. Let $\Phi=(P_1,P_2,\ldots, P_n)$ be a $k$-flower in $\mathcal{T}$. Then the ordered partition $\Phi '=(P_1',P_2',\ldots,P_m')$ is a \textit{concatenation} of $\Phi$ if there are indices $0=j_0<j_1<j_2<\cdots <j_m=m$ such that $P_i'=P_{j_{i-1}+1}\cup \dots \cup P_{j_i}$ for all $i\in [m]$. It is not difficult to prove that, if $\Phi'$ is a concatenation of a $k$-flower $\Phi$ in $\mathcal{T}$, then $\Phi'$ is also a $k$-flower in $\mathcal{T}$. If $\Phi '$ is a concatenation of $\Phi$, then we say that $\Phi$ \textit{refines} $\Phi '$.

The following is an economical way to show that a $\mathcal{T}$-strong partition of $E$ is a $k$-flower in $\mathcal{T}$.

\begin{lemma}
 \label{flowerunion}
Let $\mathcal{T}$ be a tangle of order $k$ in a connectivity system $(E,\lambda)$. Let $n\geq 4$, and let $\Phi=(P_1,\ldots,P_n)$ be a $\mathcal{T}$-strong partition of $E$. If $P_i\cup P_{i+1}$ is $k$-separating for each $i\in [n-1]$, then $\Phi$ is a $k$-flower in $\mathcal{T}$.
\end{lemma}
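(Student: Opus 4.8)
The plan is to verify the two defining conditions of a $k$-flower: that each petal $P_i$ is $k$-separating, and that each consecutive union $P_i\cup P_{i+1}$ (subscripts modulo $n$) is $k$-separating. The partition is assumed $\mathcal{T}$-strong, and every consecutive union \emph{except the wraparound} $P_n\cup P_1$ is given to be $k$-separating, so the two things I must supply are (a) the wraparound $P_n\cup P_1$ and (b) the individual petals. The basic tool throughout is the observation that, for any nonempty proper subset $I$ of $[n]$, the set $P_I$ is $\mathcal{T}$-strong and has $\mathcal{T}$-strong complement $P_{[n]-I}$ (each contains a petal, and supersets of $\mathcal{T}$-strong sets are $\mathcal{T}$-strong), so that $(P_I, E-P_I)$ is a $\mathcal{T}$-strong partition and hence $\lambda(P_I)\geq k$ by (T2). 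Combined with Lemma \ref{uncrossing2}, this yields a \emph{merging principle}: if two unions of consecutive petals $P_i\cup\cdots\cup P_j$ and $P_j\cup\cdots\cup P_\ell$ overlapping in exactly the petal $P_j$ are both $k$-separating, then, since their intersection $P_j$ satisfies $\lambda(P_j)\geq k$, their union $P_i\cup\cdots\cup P_\ell$ is $k$-separating by Lemma \ref{uncrossing2}(i).

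First I would establish the wraparound. Starting from the given $k$-separating pairs $P_2\cup P_3,\, P_3\cup P_4,\ldots, P_{n-2}\cup P_{n-1}$ and merging them successively along their shared interior petals, the merging principle shows that $P_2\cup P_3\cup\cdots\cup P_{n-1}$ is $k$-separating. Its complement is exactly $P_n\cup P_1$, so by symmetry of $\lambda$ the wraparound $P_n\cup P_1$ is $k$-separating, settling (a). (When $n=4$ no merging is needed, since $P_2\cup P_3$ is already the required set.)

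It then remains to show each petal $P_i$ is $k$-separating. With the wraparound now in hand, every consecutive pair $P_{i-1}\cup P_i$ and $P_i\cup P_{i+1}$ (indices modulo $n$) is $k$-separating, and I would uncross these two sets. Their union is $P_{i-1}\cup P_i\cup P_{i+1}$; because $n\geq 4$, the complement $E-(P_{i-1}\cup P_i\cup P_{i+1})$ is a nonempty union of petals, hence $\mathcal{T}$-strong, so it has $\lambda\geq k$. Lemma \ref{uncrossing2}(ii) then gives that the intersection $P_i$ is $k$-separating, establishing (b) for every $i$. Together with the $\mathcal{T}$-strongness of the partition, this verifies both flower conditions, so $\Phi$ is a $k$-flower.

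The main obstacle is the asymmetry in the hypothesis: only the ``path'' of consecutive unions is assumed, not the full ``cycle''. Thus the genuinely new content is deriving the wraparound $P_n\cup P_1$, and this is exactly where the merging principle—and through it the $\mathcal{T}$-strongness of every proper union of petals—does the work. The assumption $n\geq 4$ is essential precisely so that, when uncrossing three consecutive petals to isolate a single petal, at least one further petal remains outside to guarantee that the relevant complement is $\mathcal{T}$-strong.
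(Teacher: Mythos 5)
Your proof is correct and follows essentially the same route as the paper's: first obtaining the wraparound union $P_n\cup P_1$ by repeated uncrossings along shared interior petals (with $n=4$ handled directly by symmetry), then isolating each petal $P_i$ by uncrossing $P_{i-1}\cup P_i$ and $P_i\cup P_{i+1}$, using $n\geq 4$ to guarantee the complement of three consecutive petals is $\mathcal{T}$-strong. The only difference is presentational: you package the recurring (T2)-plus-uncrossing step as an explicit ``merging principle,'' which the paper applies inline.
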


\begin{proof}
Suppose that $P_i\cup P_{i+1}$ is $k$-separating for each $i\in [n-1]$. We first show that $P_n\cup P_1$ is $k$-separating. If $n=4$, then $P_n\cup P_1$ is $k$-separating because $P_2\cup P_3$ is $k$-separating and $\lambda$ is symmetric. For $n>4$, the sets $P_2\cup P_3$ and $P_3\cup P_4$ are $k$-separating, and their intersection is the set $P_3$. Since the partition $(P_3, E-P_3)$ is $\mathcal{T}$-strong, we deduce that $\lambda(P_3)\geq k$ by (T2). Then $P_2\cup P_3\cup P_4$ is $k$-separating by uncrossing $P_2\cup P_3$ and $P_3\cup P_4$. By repeated uncrossings, we deduce that $P_2\cup \cdots \cup P_{n-1}$ is $k$-separating, so the complement $E-(P_2\cup \cdots \cup P_{n-1})=P_n\cup P_1$ is $k$-separating. Thus the union of any two members of $\Phi$ that are consecutive in the cyclic order is $k$-separating. Furthermore, for each $i\in [n]$, the set $P_i$ is the intersection of the $k$-separating sets $P_{i-1}\cup P_i$ and $P_i\cup P_{i+1}$, where all subscripts are interpreted modulo $n$. Since $n\geq 4$, the partition $(P_{i-1}\cup P_i\cup P_{i+1}, E-(P_{i-1}\cup P_i\cup P_{i+1}))$ is $\mathcal{T}$-strong, so $\lambda(E-(P_{i-1}\cup P_i\cup P_{i+1}))\geq k$ by (T2). Thus $P_i$ is $k$-separating by uncrossing $P_{i-1}\cup P_i$ and $P_i\cup P_{i+1}$. Therefore $\Phi$ is a $k$-flower in $\mathcal{T}$.
\end{proof}

Let $\mathcal{T}$ be a tangle of order $k$ in a connectivity system $(E,\lambda)$, and let $\mathcal{S}$ be a tree compatible set. The $\mathcal{S}$-\textit{order} of a $k$-flower $\Phi$ in $\mathcal{T}$ is the minimum number of petals in a $k$-flower that is $\mathcal{T}$-equivalent to $\Phi$ with respect to $\mathcal{S}$. Up to equivalence of $k$-separations, a $k$-flower of $\mathcal{S}$-order one displays no $(k, \mathcal{S})$-separations, a $k$-flower of $\mathcal{S}$-order two displays exactly one $(k, \mathcal{S})$-separation, and a $k$-flower of $\mathcal{S}$-order at least three displays at least two $(k, \mathcal{S})$-separations. A $k$-flower is $\mathcal{S}$-\textit{tight} if it is not equivalent to a $k$-flower with fewer petals.

Let $n\geq 2$, and let $\Phi=(P_1,\ldots,P_n)$ be a $k$-flower in $\mathcal{T}$. For $i\in [n]$, the petal $P_i$ of $\Phi$ is \textit{$\mathcal{T}$-loose} if $P_i\subseteq \fcl_{\mathcal{T}}(P_j)$ for some petal $P_j\neq P_i$ such that, up to labels, $P_i$ and $P_j$ are consecutive in the cyclic order on $\Phi$. The next result shows that $k$-flowers in $\mathcal{T}$ that have $\mathcal{T}$-loose petals are not $\mathcal{S}$-tight. 

\begin{lemma}
\label{flowereq}
Let $\mathcal{T}$ be a tangle of order $k$ in a connectivity system $(E,\lambda)$, and let $\mathcal{S}$ be a tree compatible set. Let $n\geq 2$, and let $\Phi=(P_1,\ldots,P_n)$ be a $k$-flower in $\mathcal{T}$. If $P_1\subseteq \fcl_{\mathcal{T}}(P_2)$, then the concatenation $\Phi '=(P_1\cup P_2,P_3,\ldots, P_n)$ of $\Phi$ is $\mathcal{T}$-equivalent to $\Phi$ with respect to $\mathcal{S}$. 
\end{lemma}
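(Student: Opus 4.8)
The plan is to establish the two relations $\Phi'\preccurlyeq_{\mathcal S}\Phi$ and $\Phi\preccurlyeq_{\mathcal S}\Phi'$, which together give the asserted $\mathcal T$-equivalence. The first is immediate: every petal of $\Phi'$ is a union of petals of $\Phi$, so any $(k,\mathcal S)$-separation displayed by $\Phi'$ is already displayed by $\Phi$ and is $\mathcal T$-equivalent to itself. All the work lies in showing $\Phi\preccurlyeq_{\mathcal S}\Phi'$. For this I would take an arbitrary $(k,\mathcal S)$-separation $(X,E-X)$ displayed by $\Phi$, say $X=P_I$, and split on whether $\Phi$ keeps $P_1$ and $P_2$ on the same side. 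If $\{1,2\}\subseteq I$ or $\{1,2\}\cap I=\emptyset$, then $X$ is a union of petals of $\Phi'$, so $(X,E-X)$ is already displayed by $\Phi'$ and there is nothing to prove.

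The substantive case is when $X$ separates $P_1$ from $P_2$. Regarding $(X,E-X)$ as an unordered separation $(R,G)$, I may name the sides so that $P_1\subseteq R$ and $P_2\subseteq G$; moreover $R\neq P_1$, since if $R=P_1$ then $P_1\subseteq\fcl_{\mathcal T}(P_2)\subseteq\fcl_{\mathcal T}(E-P_1)$ would force $\fcl_{\mathcal T}(E-P_1)=E$, making $(R,G)$ sequential and contradicting that it is a $(k,\mathcal S)$-separation. The key move is to relocate the loose petal $P_1$ onto the side of $P_2$, forming $\hat R=R\setminus P_1$ and $\hat G=G\cup P_1$; each of $\hat R$ and $\hat G$ is then a union of petals of $\Phi'$, so $(\hat R,\hat G)$ is displayed by $\Phi'$. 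I would verify that it is a $(k,\mathcal S)$-separation $\mathcal T$-equivalent to $(R,G)$ in three steps. First, $\hat G$ is $k$-separating: uncrossing the $k$-separating sets $P_1\cup P_2$ and $G$, whose intersection is $P_2$ with $\lambda(P_2)\geq k$ by (T2), shows that their union $\hat G$ is $k$-separating by Lemma \ref{uncrossing2}. Second, since $P_2\subseteq G$ gives $P_1\subseteq\fcl_{\mathcal T}(P_2)\subseteq\fcl_{\mathcal T}(G)$ by Lemma \ref{fcloperator}, we have $G\subseteq\hat G\subseteq\fcl_{\mathcal T}(G)$, whence $\fcl_{\mathcal T}(\hat G)=\fcl_{\mathcal T}(G)$. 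Third, a routine check shows $(\hat R,\hat G)$ is $\mathcal T$-strong and non-sequential, because both sides contain a petal and $\fcl_{\mathcal T}(\hat R)\subseteq\fcl_{\mathcal T}(R)\neq E$ while $\fcl_{\mathcal T}(\hat G)=\fcl_{\mathcal T}(G)\neq E$. Lemma \ref{3.3} then applies to the non-sequential separations $(R,G)$ and $(\hat R,\hat G)$, and since $\fcl_{\mathcal T}(\hat G)=\fcl_{\mathcal T}(G)$ they are $\mathcal T$-equivalent; property (S1) of tree compatibility finally promotes $(\hat R,\hat G)$ to a genuine $(k,\mathcal S)$-separation, completing this case.

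The step I expect to be the main obstacle is producing a target separation that is simultaneously displayed by $\Phi'$, $k$-separating, and $\mathcal T$-equivalent to the original. Relocating $P_1$ leaves the closure on the $P_2$-side unchanged, which is exactly what makes Lemma \ref{3.3} applicable, but it is not obvious a priori that the relocated set stays $k$-separating: $P_1$ is a $\mathcal T$-strong petal rather than a $\mathcal T$-weak set, so the direct relocation lemmas in Lemma \ref{bigeqlemma} do not apply. Resolving this is precisely the uncrossing of $P_1\cup P_2$ against the $P_2$-side, which exploits that $P_1$ and $P_2$ are consecutive (so $P_1\cup P_2$ is $k$-separating) together with $\lambda(P_2)\geq k$. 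Once $k$-separability is secured, the hypothesis $P_1\subseteq\fcl_{\mathcal T}(P_2)$ does the rest via monotonicity and idempotence of the full closure, and the verification of non-sequentiality is routine.
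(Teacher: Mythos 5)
Your proposal is correct and follows essentially the same route as the paper's proof: reduce to the case where the separation splits $P_1$ from $P_2$, relocate $P_1$ onto the side containing $P_2$, show the relocated set is $k$-separating by uncrossing $P_1\cup P_2$ against that side (using $\lambda(P_2)\geq k$ via (T2)), observe the full closure of that side is unchanged, verify $\mathcal{T}$-strength and non-sequentiality, and conclude with Lemma \ref{3.3} and (S1). The only cosmetic differences are the naming of the two sides and that you rule out $R=P_1$ up front where the paper instead argues directly that the shrunken side still contains a petal because the separation is non-sequential.
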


\begin{proof}
Assume that $P_1\subseteq \fcl_{\mathcal{T}}(P_2)$, and let $\Phi '=(P_1\cup P_2,P_3,\ldots, P_n)$. Since $\Phi$ refines $\Phi '$, it follows immediately that $\Phi '\preccurlyeq_{\mathcal{S}} \Phi$. Assume that $(R,G)$ is a $(k, \mathcal{S})$-separation displayed by $\Phi$. If $P_1$ and $P_2$ are both contained in either $R$ or $G$, then $(R,G)$ is displayed by $\Phi '$. Thus we may assume, up to switching $R$ and $G$, that $P_1\subseteq G$ and $P_2\subseteq R$. We claim that the partition $(R\cup P_1,G-P_1)$ is a $(k, \mathcal{S})$-separation that is $\mathcal{T}$-equivalent to $(R,G)$. The set $R\cup P_1$ is the union of the $k$-separating sets $P_1\cup P_2$ and $R$, and their intersection is $P_2$, so $R\cup P_1$ is $k$-separating by uncrossing $P_1\cup P_2$ and $R$. Moreover, $G-P_1$ contains some petal of $\Phi$ because $(R,G)$ is a non-sequential $k$-separation and $P_1\subseteq \fcl_{\mathcal{T}}(P_2)\subseteq \fcl_{\mathcal{T}}(R)$ by Lemma \ref{fcloperator}. Thus $(R\cup P_1,G-P_1)$ is a $\mathcal{T}$-strong $k$-separation. Now from Lemma \ref{fcloperator} it follows that $\fcl_{\mathcal{T}}(R\cup P_1)=\fcl_{\mathcal{T}}(R)$ and $\fcl_{\mathcal{T}}(G-P_1)\subseteq \fcl_{\mathcal{T}}(G)$, so $(R\cup P_1,G-P_1)$ is a non-sequential $k$-separation because $(R,G)$ is a non-sequential $k$-separation. Hence $(R\cup P_1,G-P_1)$ $\mathcal{T}$-equivalent to $(R,G)$ by the fact that $\fcl_{\mathcal{T}}(R\cup P_1)=\fcl_{\mathcal{T}}(R)$ and Lemma \ref{3.3}, so $(R\cup P_1,G-P_1)$ is a $(k,\mathcal{S})$-separation by (S1). Then $\Phi\preccurlyeq_{\mathcal{S}}\Phi '$ because $(R\cup P_1,G-P_1)$ is displayed by $\Phi '$. Thus $\Phi$ and $\Phi '$ are indeed $\mathcal{T}$-equivalent $k$-flowers with respect to $\mathcal{S}$. 
\end{proof}

\begin{lemma}
 \label{nompetalt}
Let $\mathcal{T}$ be a tangle of order $k$ in a connectivity system $(E,\lambda)$, and let $\mathcal{S}$ be a tree compatible set. If $\Phi=(P_1,P_2,\ldots, P_n)$ is a $k$-flower in $\mathcal{T}$ of $\mathcal{S}$-order at least two, then $E-P_i\in \mathcal{S}$ for all $i\in [n]$.
\end{lemma}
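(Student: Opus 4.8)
The plan is to reduce everything to condition (S2) of tree compatibility: once I know that $(E-P_i,P_i)$ is a $\mathcal{T}$-strong $k$-separation and that some member of $\mathcal{S}$ is contained in $E-P_i$, property (S2) delivers $E-P_i\in\mathcal{S}$. The members of $\mathcal{S}$ will come from a $(k,\mathcal{S})$-separation that $\Phi$ actually displays, and the first task is to produce one. Since $\Phi$ has $\mathcal{S}$-order at least two, $\Phi$ cannot be $\mathcal{T}$-equivalent (with respect to $\mathcal{S}$) to the trivial one-petal flower $(E)$; but a flower displaying no $(k,\mathcal{S})$-separation is vacuously equivalent to $(E)$ under $\preccurlyeq_{\mathcal{S}}$ in both directions, so $\Phi$ must display at least one $(k,\mathcal{S})$-separation $(P_I,E-P_I)$. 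Here $I$ is a proper, non-empty subset of $[n]$, since $\emptyset$ is $\mathcal{T}$-sequential and so belongs to no tree-compatible set; in particular both $P_I\in\mathcal{S}$ and $E-P_I=P_{[n]\setminus I}\in\mathcal{S}$.

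Next, fix $i\in[n]$. I would first check that $(E-P_i,P_i)$ is a $\mathcal{T}$-strong $k$-separation: the petal $P_i$ is $k$-separating and $\mathcal{T}$-strong by the definition of a $k$-flower, its complement is $k$-separating by the symmetry of $\lambda$, and $E-P_i$ is $\mathcal{T}$-strong because $n\geq 2$ forces it to contain some petal $P_j$ with $j\neq i$, and supersets of $\mathcal{T}$-strong sets are $\mathcal{T}$-strong. I would then split into two cases according to whether $i\notin I$ or $i\in I$. If $i\notin I$, then $P_I\subseteq E-P_i$, so applying (S2) to the member $P_I\in\mathcal{S}$ and the $\mathcal{T}$-strong $k$-separation $(E-P_i,P_i)$ gives $E-P_i\in\mathcal{S}$. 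If $i\in I$, then $E-P_I=P_{[n]\setminus I}\subseteq E-P_i$, and the same application of (S2) to the member $E-P_I\in\mathcal{S}$ again yields $E-P_i\in\mathcal{S}$. As $i$ was arbitrary, this proves the lemma.

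The only step that takes any thought is the first, namely converting the hypothesis on the $\mathcal{S}$-order into the concrete assertion that $\Phi$ displays a genuine $(k,\mathcal{S})$-separation; after that, the argument is just a containment feeding into (S2), and I anticipate no real difficulty there.
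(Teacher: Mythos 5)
Your proof is correct and follows essentially the same route as the paper's: extract a displayed $(k,\mathcal{S})$-separation from the $\mathcal{S}$-order hypothesis, note that $E-P_i$ contains one side of it, and conclude by (S2). The extra details you supply (why a displayed $(k,\mathcal{S})$-separation exists, and why $(E-P_i,P_i)$ is a $\mathcal{T}$-strong $k$-separation) are exactly the steps the paper leaves implicit, and they check out.
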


\begin{proof}
Assume that $\Phi=(P_1,P_2,\ldots, P_n)$ is a $k$-flower in $\mathcal{T}$ of $\mathcal{S}$-order at least two. Then $\Phi$ displays at least one $(k, \mathcal{S})$-separation $(R,G)$, so either $P_i\in R$ or $P_i\in G$ for each $i\in [n]$. Then $E-P_i$ must contain $R$ or $G$, so $E-P_i\in \mathcal{S}$ by (S2). 
\end{proof}

\begin{lemma}
\label{tightflowerwin}
Let $\mathcal{T}$ be a tangle of order $k$ in a connectivity system $(E,\lambda)$, and let $\mathcal{S}$ be a tree compatible set. Let $\Phi=(P_1,P_2,\ldots, P_n)$ be a $k$-flower in $\mathcal{T}$ of $\mathcal{S}$-order at least two with no $\mathcal{T}$-loose petals. If $X\subseteq E-P_1$ is a non-empty $\mathcal{T}$-weak set such that $P_1\cup X$ is $k$-separating, then $P_i-X$ is $\mathcal{T}$-strong for all $i\in [2,n]$.
\end{lemma}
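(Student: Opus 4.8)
The plan is to prove the statement directly by induction on $i$, establishing that $P_i-X$ is $\mathcal{T}$-strong for $i=2,3,\ldots,n$ in turn, where each step shows that a hypothetically $\mathcal{T}$-weak $P_i-X$ would produce a $\mathcal{T}$-loose petal, which the hypotheses forbid. First I would record three preliminaries. Writing $X_j=X\cap P_j$ and $A=P_1\cup X$, the one-term sequence $(X)$ is a partial $k$-sequence for $P_1$, so Lemma \ref{fclcontain} gives $X\subseteq\fcl_{\mathcal{T}}(P_1)$. Since $\Phi$ has $\mathcal{S}$-order at least two, Lemma \ref{nompetalt} gives $E-P_j\in\mathcal{S}$ for each $j$; as members of $\mathcal{S}$ are non-sequential and each $P_j$ is $\mathcal{T}$-strong, this forces $\fcl_{\mathcal{T}}(P_j)\neq E$, whence $E-\fcl_{\mathcal{T}}(P_j)$ is $\mathcal{T}$-strong. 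Finally, for any arc $I\subseteq[2,n]$ the set $P_I$ is $k$-separating (consecutive petals, in either an anemone or a daisy by Corollary \ref{flowerclass}); uncrossing $A$ with $E-P_I$, whose intersection $A\cap(E-P_I)\supseteq P_1$ and whose complement contains a petal of $P_I$ are both $\mathcal{T}$-strong so that $\lambda(A\cap(E-P_I))\geq k$ by (T2), shows via Lemma \ref{uncrossing2} that $P_I-X$ is $k$-separating. In particular each $P_j-X_j$, and each $(P_{i-1}-X_{i-1})\cup(P_i-X_i)$, is $k$-separating.

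For the base case $i=2$ I would suppose $P_2-X_2$ is $\mathcal{T}$-weak and derive a contradiction. Uncrossing $\fcl_{\mathcal{T}}(P_1)$ with the $k$-separating set $P_1\cup P_2$ (their intersection contains $P_1$ and the complementary set contains the $\mathcal{T}$-strong set $E-\fcl_{\mathcal{T}}(P_1)$) shows that $\fcl_{\mathcal{T}}(P_1)\cup P_2$ is $k$-separating. Since $X_2\subseteq X\subseteq\fcl_{\mathcal{T}}(P_1)$, the set $P_2-\fcl_{\mathcal{T}}(P_1)$ is contained in the $\mathcal{T}$-weak set $P_2-X_2$ and so is $\mathcal{T}$-weak; because $\fcl_{\mathcal{T}}(P_1)$ is fully closed, this forces $P_2-\fcl_{\mathcal{T}}(P_1)=\emptyset$, i.e. $P_2\subseteq\fcl_{\mathcal{T}}(P_1)$, making $P_2$ a $\mathcal{T}$-loose petal.

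For the inductive step I would fix $i\geq 3$, assume $P_2-X_2,\ldots,P_{i-1}-X_{i-1}$ are $\mathcal{T}$-strong, and suppose for contradiction that $P_i-X_i$ is $\mathcal{T}$-weak (it is non-empty, and $X_i\neq\emptyset$, since $P_i$ is $\mathcal{T}$-strong). By the inductive hypothesis together with the arc fact, $P_{i-1}-X_{i-1}$ is a $\mathcal{T}$-strong $k$-separating set, and $(P_i-X_i)$ is a partial $k$-sequence for it, so Lemma \ref{fclcontain} and Lemma \ref{fcloperator} give $P_i-X_i\subseteq\fcl_{\mathcal{T}}(P_{i-1}-X_{i-1})\subseteq\fcl_{\mathcal{T}}(P_{i-1})$. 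Uncrossing $\fcl_{\mathcal{T}}(P_{i-1})$ with $P_{i-1}\cup P_i$ (the intersection contains $P_{i-1}$ and the complement contains $E-\fcl_{\mathcal{T}}(P_{i-1})$) shows $\fcl_{\mathcal{T}}(P_{i-1})\cup P_i$ is $k$-separating. Then $P_i-\fcl_{\mathcal{T}}(P_{i-1})\subseteq P_i-(P_i-X_i)=X_i$ is $\mathcal{T}$-weak, so full closure forces $P_i\subseteq\fcl_{\mathcal{T}}(P_{i-1})$, making $P_i$ a $\mathcal{T}$-loose petal. This contradiction completes the induction.

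The hard part is producing a $k$-separating set that bridges a fully closed set to the target petal $P_i$. The naive plan of working throughout with $\fcl_{\mathcal{T}}(P_1)$ breaks down for a daisy, because $\fcl_{\mathcal{T}}(P_1)\cup P_i$ need not be $k$-separating when $P_i$ is not consecutive with $P_1$: no $k$-separating arc joins them, and every uncrossing that trims $P_i$ using $A$ or $\fcl_{\mathcal{T}}(P_1)$ also trims the neighbouring petal, so one cannot force $P_i$ into $\fcl_{\mathcal{T}}(P_1)$. The device that resolves this is to anchor $P_i$ to its immediate predecessor $P_{i-1}$ rather than to $P_1$: the inductive hypothesis makes $P_{i-1}-X_{i-1}$ $\mathcal{T}$-strong, the weak remainder $P_i-X_i$ then slides into $\fcl_{\mathcal{T}}(P_{i-1}-X_{i-1})\subseteq\fcl_{\mathcal{T}}(P_{i-1})$, and only the consecutive pair $P_{i-1}\cup P_i$—which is $k$-separating in every flower—is needed to reach $P_i$. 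I would take care over the edge behaviour at $i=n$ (where $P_n$ is consecutive with $P_1$ as well, but only $P_{n-1}$ is used) and verify that the arc-uncrossing is valid for the singleton arcs $\{j\}$ and the pair arc $\{i-1,i\}$, which are the only arcs the argument invokes.
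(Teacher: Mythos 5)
Your proof is correct, and it reaches the paper's punchline---a weak remainder forces $P_i\subseteq\fcl_{\mathcal{T}}(P_{i-1})$, i.e.\ a $\mathcal{T}$-loose petal---by a genuinely different route. The paper's argument is not inductive: it first shows the prefix sets $P_1\cup P_2\cup\cdots\cup P_i\cup X$ are $k$-separating, then rules out the case that every $P_i-X$ is $\mathcal{T}$-weak (such remainders would form a partial $k$-sequence for $P_1$, forcing $\fcl_{\mathcal{T}}(P_1)=E$ against the non-sequentiality of $E-P_1$ supplied by Lemma~\ref{nompetalt}), and then, having secured a $\mathcal{T}$-strong remainder $P_j-X$, uses that set as the $\mathcal{T}$-strong anchor in its (T2) applications---reversing the petal order if necessary so that $i<j$---and concludes with the two-term partial $k$-sequence $(P_i\cap X,\,P_i-X)$ for $P_{i-1}$ and Lemma~\ref{fclcontain}. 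You avoid both the case split and the relabelling: your induction anchors every (T2) application on $E-\fcl_{\mathcal{T}}(P_{i-1})$ instead, which costs you an application of Lemma~\ref{nompetalt} to every petal (the paper needs it only for $P_1$) together with the extra machinery of Lemma~\ref{fcloperator}(ii) and the defining property of fully closed sets, but buys a uniform argument that never needs a strong remainder to exist anywhere in the flower. Your ``arc fact'' plays the role of the paper's prefix-set claim, and since you invoke it only for singletons and adjacent pairs---which are consecutive in the cyclic order---it is valid for daisies as well as anemones, so the edge cases you flag ($i=n$, small $n$) are indeed harmless. Both proofs are sound; the paper's is leaner in the lemmas it invokes, while yours has the structurally simpler case-free shape.
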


\begin{proof}
Let $X\subseteq E-P_1$ be a non-empty $\mathcal{T}$-weak set such that $P_1\cup X$ is $k$-separating. 

\begin{claim}
\label{tifw1}
$P_1\cup P_2\cup \cdots \cup P_i\cup X$ is $k$-separating for all $i\in [2,n-1]$. 
\end{claim}

\begin{subproof}
Let $i\in [2,n-1]$. The set $P_1\cup P_2\cup \cdots \cup P_i\cup X$ is the union of the $k$-separating sets $P_1\cup P_2\cup \cdots \cup P_i$ and $P_1\cup X$. The set $P_1\cup (X\cap (P_1\cup P_2\cup \cdots \cup P_i))$ contains the $\mathcal{T}$-strong set $P_1$ and avoids the $\mathcal{T}$-strong set $P_n$, so the partition $$(P_1\cup (X\cap (P_1\cup P_2\cup \cdots \cup P_i)), ((P_2\cup \cdots \cup P_i)-X)\cup P_{i+1}\cup \cdots \cup P_n)$$ is $\mathcal{T}$-strong, and so $\lambda(P_1\cup (X\cap (P_1\cup P_2\cup \cdots \cup P_i)))\geq k$ by (T2). Since the set $P_1\cup (X\cap (P_1\cup P_2\cup \cdots \cup P_i))$ is the intersection of $P_1\cup P_2\cup \cdots \cup P_i$ and $P_1\cup X$, the set $P_1\cup P_2\cup \cdots \cup P_i\cup X$ is $k$-separating by uncrossing $P_1\cup P_2\cup \cdots \cup P_i$ and $P_1\cup X$.  
\end{subproof}

Assume that $P_i-X$ is $\mathcal{T}$-weak for all $i\in [2,n]$. Then, by \ref{tifw1}, the sequence $(X, P_2-X, \ldots, P_n-X)$ is a partial $k$-sequence for $P_1$. Thus $X\cup (P_2-X)\cup \cdots \cup (P_n-X)\subseteq \fcl_{\mathcal{T}}(P_1)$ by Lemma \ref{fclcontain}, and so $\fcl_{\mathcal{T}}(P_1)=E$; a contradiction because $E-P_1$ is a non-sequential $k$-separating set by Lemma \ref{nompetalt}. We may therefore assume that there is some $j\in [2,n]$ such that $P_j-X$ is $\mathcal{T}$-strong. Seeking a contradiction, suppose that $P_i-X$ is $\mathcal{T}$-weak for some $i\in [2,n]$. We may assume, by reversing the order of the petals $P_2,\ldots, P_n$ of $\Phi$ if necessary, that $i\in [2,j-1]$. Now, the set $P_{i-1}\cup (P_i\cap X)$ is the intersection of the $k$-separating sets $P_1\cup \cdots \cup P_{i-1}\cup X$ and $P_{i-1}\cup P_i$. Since $P_1\cup P_2\cup \cdots \cup P_i\cup X$, the union of $P_1\cup \cdots \cup P_{i-1}\cup X$ and $P_{i-1}\cup P_i$, is $\mathcal{T}$-strong and avoids the $\mathcal{T}$-strong set $P_j-X$, the partition $$(P_1\cup P_2\cup \cdots \cup P_i\cup X, E-(P_1\cup P_2\cup \cdots \cup P_i\cup X))$$ is $\mathcal{T}$-strong, and so $\lambda(P_1\cup P_2\cup \cdots \cup P_i\cup X)\geq k$ by (T2). Then $P_{i-1}\cup (P_i\cap X)$ is $k$-separating by uncrossing $P_1\cup \cdots \cup P_{i-1}\cup X$ and $P_{i-1}\cup P_i$. But then $(P_i\cap X, P_i-X)$ is a partial $k$-sequence for $P_{i-1}$, so $P_i\subseteq \fcl_{\mathcal{T}}(P_{i-1})$ by Lemma \ref{fclcontain}; a contradiction because $\Phi$ has no $\mathcal{T}$-loose petals.
\end{proof} 

The next lemma relates equivalence of $k$-separations and equivalence of $k$-flowers in $\mathcal{T}$.

\begin{lemma}
\label{flowereq3}
Let $\mathcal{T}$ be a tangle of order $k$ in a connectivity system $(E,\lambda)$, and let $\mathcal{S}$ be a tree compatible set. Let $\Phi=(P_1,\ldots, P_n)$ be a $k$-flower in $\mathcal{T}$ of $\mathcal{S}$-order at least two with no $\mathcal{T}$-loose petals. If $X\subseteq E-P_1$ is a non-empty $\mathcal{T}$-weak set such that $P_1\cup X$ is $k$-separating, then: 
\begin{enumerate}
 \item[(i)] $\Phi '=(P_1\cup X, P_2-X,\ldots, P_n-X)$ is a $k$-flower in $\mathcal{T}$ that is $\mathcal{T}$-equivalent to $\Phi$ with respect to $\mathcal{S}$.
 \item[(ii)] $\fcl_{\mathcal{T}}(P_1)=\fcl_{\mathcal{T}}(P_1\cup X)$ and $\fcl_{\mathcal{T}}(P_i-X)=\fcl_{\mathcal{T}}(P_i)$ for all $i\in [2,n]$.
\end{enumerate}
\end{lemma}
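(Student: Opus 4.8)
The plan is to write $Q_1 = P_1 \cup X$ and $Q_i = P_i - X$ for $i \in [2,n]$, so that $\Phi' = (Q_1,\ldots,Q_n)$, and to establish the facts in the order: $\Phi'$ is a $\mathcal{T}$-strong partition, then the closure identities of (ii), then that $\Phi'$ is a $k$-flower, and finally the equivalence. By Lemma~\ref{tightflowerwin}, each $Q_i$ with $i \in [2,n]$ is $\mathcal{T}$-strong, and $Q_1$ is $\mathcal{T}$-strong since it contains the $\mathcal{T}$-strong set $P_1$; hence $\Phi'$ is a $\mathcal{T}$-strong partition of $E$. For (ii), both identities are instances of Corollary~\ref{fclcontaincor}. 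Since $X \subseteq E - P_1$ is a non-empty $\mathcal{T}$-weak set with $P_1 \cup X$ $k$-separating, the one-term sequence $(X)$ is a partial $k$-sequence for $P_1$, giving $\fcl_{\mathcal{T}}(P_1 \cup X) = \fcl_{\mathcal{T}}(P_1)$. For $i \in [2,n]$, the identity is trivial if $P_i \cap X = \emptyset$; otherwise $(P_i \cap X)$ is a partial $k$-sequence for the $\mathcal{T}$-strong set $P_i - X$, because $(P_i - X) \cup (P_i \cap X) = P_i$ is $k$-separating, so Corollary~\ref{fclcontaincor} yields $\fcl_{\mathcal{T}}(P_i) = \fcl_{\mathcal{T}}(P_i - X)$.

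To prove that $\Phi'$ is a $k$-flower it suffices to show that each petal $Q_i$ and each consecutive union $Q_i \cup Q_{i+1}$ is $k$-separating, and I would prove the single statement that $Q_J = \bigcup_{j \in J} Q_j$ is $k$-separating whenever $J$ is an index set displayed by $\Phi$, which covers both. When $1 \notin J$, put $P_J = \bigcup_{j \in J} P_j$; then $P_J$ is $k$-separating and $Q_J = P_J - X = P_J - Q_1$. Applying Lemma~\ref{elemconn}(ii) to the $k$-separating sets $P_J$ and $Q_1$ gives $\lambda(Q_J) \leq \lambda(P_J) + \lambda(Q_1) - \lambda(Q_1 - P_J)$. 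Now $Q_1 - P_J$ contains the $\mathcal{T}$-strong set $P_1$, while $E - (Q_1 - P_J)$ contains the $\mathcal{T}$-strong set $Q_J$ (which is disjoint from $Q_1$), so the partition $(Q_1 - P_J, E - (Q_1 - P_J))$ is $\mathcal{T}$-strong and $\lambda(Q_1 - P_J) \geq k$ by (T2); hence $\lambda(Q_J) \leq k$. When $1 \in J$, the set $[n] - J$ avoids $1$ and $Q_{[n]-J} = E - Q_J$, so $Q_J$ is $k$-separating by the previous case and symmetry. Taking $J = \{i\}$ and $J = \{i,i+1\}$ shows $\Phi'$ is a $k$-flower. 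This uncrossing step is the main obstacle: the naive attempt to intersect $P_i \cup P_{i+1}$ with a partial union forces the complement of the relevant union to be empty, and it is precisely the submodular inequality of Lemma~\ref{elemconn}(ii) combined with the $\mathcal{T}$-strongness supplied by Lemma~\ref{tightflowerwin} that makes the bound work uniformly in $n$, avoiding the failure of Lemma~\ref{flowerunion} for $n \in \{2,3\}$.

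It remains to show $\Phi$ and $\Phi'$ are $\mathcal{T}$-equivalent with respect to $\mathcal{S}$. Let $(P_I, P_{[n]-I})$ be a $(k,\mathcal{S})$-separation displayed by $\Phi$; I claim $(Q_I, Q_{[n]-I})$ is an equivalent $(k,\mathcal{S})$-separation displayed by $\Phi'$. By the previous paragraph $Q_I$ is $k$-separating, and $Q_I, Q_{[n]-I}$ are $\mathcal{T}$-strong, so this is a $\mathcal{T}$-strong $k$-separation displayed by $\Phi'$. Exactly as in (ii), moving the $\mathcal{T}$-weak set $X \setminus P_I$ when $1 \in I$, or $X \cap P_I$ when $1 \notin I$, via a one-term partial $k$-sequence and Corollary~\ref{fclcontaincor}, gives $\fcl_{\mathcal{T}}(Q_I) = \fcl_{\mathcal{T}}(P_I)$ and likewise $\fcl_{\mathcal{T}}(Q_{[n]-I}) = \fcl_{\mathcal{T}}(P_{[n]-I})$. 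Since $(P_I, P_{[n]-I})$ is non-sequential these closures are not $E$, so $(Q_I, Q_{[n]-I})$ is non-sequential, and Lemma~\ref{3.3} then gives that it is $\mathcal{T}$-equivalent to $(P_I, P_{[n]-I})$; by (S1) it is a $(k,\mathcal{S})$-separation. Hence $\Phi \preccurlyeq_{\mathcal{S}} \Phi'$, and the symmetric argument gives $\Phi' \preccurlyeq_{\mathcal{S}} \Phi$, so $\Phi$ and $\Phi'$ are $\mathcal{T}$-equivalent $k$-flowers with respect to $\mathcal{S}$.
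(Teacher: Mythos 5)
Most of your argument is correct, and your route to showing that $\Phi'$ is a $k$-flower is genuinely different from the paper's: you prove the single uniform claim that $Q_J$ is $k$-separating whenever $P_J$ is, using Lemma~\ref{elemconn}(ii) together with (T2) and Lemma~\ref{tightflowerwin}, whereas the paper argues by cases ($n=2$ via Lemma~\ref{bigeqlemma}(ii), $n=3$ by direct uncrossing, $n\geq 4$ via Lemma~\ref{flowerunion}). Your version is cleaner, works uniformly in $n$, and also yields the forward inclusion $\Phi\preccurlyeq_{\mathcal{S}}\Phi'$ essentially as the paper obtains it. One small blemish: in (ii) you invoke $\fcl_{\mathcal{T}}(P_i-X)$ and a partial $k$-sequence for $P_i-X$ before you have shown that $P_i-X$ is $k$-separating, which the definitions of $\fcl_{\mathcal{T}}$ and of a partial $k$-sequence require; this is repaired simply by proving your flower claim first.

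The genuine gap is your final sentence: ``the symmetric argument gives $\Phi'\preccurlyeq_{\mathcal{S}}\Phi$.'' There is no symmetric argument. Your key implication runs one way only: from $\lambda(P_J)\leq k$ you deduce $\lambda(Q_J)\leq k$ by writing $Q_J=P_J-Q_1$ and bounding $\lambda(Q_1-P_J)\geq k$. For $\Phi'\preccurlyeq_{\mathcal{S}}\Phi$ you must start from a $(k,\mathcal{S})$-separation $(Q_I,Q_{[n]-I})$ displayed by $\Phi'$, i.e.\ from $\lambda(Q_I)\leq k$, and exhibit an equivalent one displayed by $\Phi$; the natural candidate is $(P_I,P_{[n]-I})$, and nothing you have proved shows that $P_I$ is $k$-separating. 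The mirror-image use of Lemma~\ref{elemconn}(ii) fails: to recover $P_{[n]-I}=Q_{[n]-I}-(X\cap P_I)$ you would need to subtract a $k$-separating set whose part outside $Q_{[n]-I}$ has connectivity at least $k$, and $X\cap P_I$ is merely $\mathcal{T}$-weak. A priori $\Phi'$ could display more $k$-separations than $\Phi$ (if $\Phi$ is a daisy, it is not obvious that $Q_I$ being $k$-separating forces $I$ to be consecutive), so this direction needs its own proof. It can be supplied with the paper's tools: taking $1\notin I$, each $Q_i$ with $i\in I$ is $\mathcal{T}$-strong by Lemma~\ref{tightflowerwin} and lies inside the single petal $P_i$, so $\lambda(Q_i)\geq k$ by Lemma~\ref{strpetal}(i); repeatedly uncrossing $Q_I$ with the petals $P_i$, $i\in I$ (each intersection being some $Q_i$), shows that $P_I=Q_I\cup\bigcup_{i\in I}P_i$ is $k$-separating, after which your closure computation, Lemma~\ref{3.3} and (S1) give that $(P_I,P_{[n]-I})$ is an equivalent $(k,\mathcal{S})$-separation displayed by $\Phi$. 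This asymmetric second half is exactly what the paper's proof does separately (identifying the union of petals $P_I$ and showing it is $k$-separating before applying Lemma~\ref{bigeqlemma}(ii)); without it your proof of (i) is incomplete.
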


\begin{proof}
Suppose that $X\subseteq E-P_1$ is a non-empty $\mathcal{T}$-weak set such that $P_1\cup X$ is $k$-separating. We first show that $\Phi '$ is a $k$-flower in $\mathcal{T}$. It follows from Lemma \ref{tightflowerwin} that the partition $\Phi '$ is $\mathcal{T}$-strong. It remains to show that each member of $\Phi '$ is $k$-separating and that the union of any two consecutive members of $\Phi '$ is $k$-separating. If $n=2$, then this follows from Lemma \ref{bigeqlemma} (ii). Assume that $n=3$. Then, for $i\in \{2,3\}$, it follows from uncrossing $E-P_i$ and $P_1\cup X$ that $(P_1\cup X)\cup (P_j-X)$ is $k$-separating for $j\in \{2,3\}-\{i\}$. Thus $\Phi '$ is a $k$-flower. We may therefore assume that $n\geq 4$. Now, the set $P_1\cup X \cup (P_2-X)$ is the union of the $k$-separating sets $P_1\cup P_2$ and $P_1\cup X$ whose intersection is $P_1\cup (P_2\cap X)$. The partition $(P_1\cup (P_2\cap X), E-(P_1\cup (P_2\cap X)))$ is clearly $\mathcal{T}$-strong, so $\lambda(P_1\cup (P_2\cap X))\geq k$ by (T2). Thus $P_1\cup X \cup (P_2-X)$ is $k$-separating by uncrossing $P_1\cup P_2$ and $P_1\cup X$. Moreover, for each $i\in [2,n-1]$, the set $(P_i-X)\cup (P_{i+1}-X)$ is the intersection of the $k$-separating sets $E-(P_1\cup X)$ and $P_i\cup P_{i+1}$, whose union is a $\mathcal{T}$-strong set that avoids the $\mathcal{T}$-strong set $P_1$, so $\lambda((E-(P_1\cup X))\cup P_i\cup P_{i+1})\geq k$ by (T2). Then $(P_i-X)\cup (P_{i+1}-X)$ is $k$-separating by uncrossing $E-(P_1\cup X)$ and $P_i\cup P_{i+1}$. Thus $\Phi '$ satisfies the hypotheses of Lemma \ref{flowerunion}, so $\Phi '$ is a $k$-flower in $\mathcal{T}$.
 
We now show that $\Phi '$ is $\mathcal{T}$-equivalent to $\Phi$ with respect to $\mathcal{S}$. Suppose that $(R,G)$ is a $(k,\mathcal{S})$-separation displayed by $\Phi$. Then, up to switching $R$ and $G$, we may assume that $P_1\subseteq R$. Then $R\cup X$ is the union of the $k$-separating sets $P_1\cup X$ and $R$, whose intersection is $P_1\cup (R\cap X)$. Since the partition $(P_1\cup (R\cap X), G\cup (R-(P_1\cup X)))$ is $\mathcal{T}$-strong, it follows from (T2) that $\lambda(P_1\cup (R\cap X))\geq k$. Thus $R\cup X$ is $k$-separating by uncrossing $P_1\cup X$ and $R$. Thus $(R\cup X,G-X)$ is a $\mathcal{T}$-equivalent $k$-separation to $(R,G)$ by Lemma \ref{bigeqlemma}(ii), so $(R\cup X,G-X)$ is a $(k,\mathcal{S})$-separation. Moreover, $(R\cup X,G-X)$ is displayed by $\Phi '$. Thus $\Phi \preccurlyeq_{\mathcal{S}} \Phi '$. Now suppose that $(R,G)$ is a $(k,\mathcal{S})$-separation that is displayed by $\Phi '$. Then, up to switching $R$ and $G$, we may assume that $P_1\cup X\subseteq R$. Let $I=\{i\in [2,n]\ |\ P_i-X\subseteq G \}$. Then it is easy to check that $P_I$ is a $k$-separating set. It now follows that $G\cup (X\cap P_I)$ is a $k$-separating set by uncrossing $G$ and $P_I$, so $(G\cup (X\cap P_I), R-(X\cap P_I))$ is a $k$-separation that is $\mathcal{T}$-equivalent to $(R,G)$ by Lemma \ref{bigeqlemma}(ii). Thus $(G\cup (X\cap P_I), R-(X\cap P_I))$ is a $(k,\mathcal{S})$-separation by (S2), and $(G\cup (X\cap P_I), R-(X\cap P_I))$ is displayed by $\Phi$, so $\Phi '\preccurlyeq_{\mathcal{S}}\Phi$. This establishes (i).

For (ii), let $(P_1',\ldots,P_n')=(P_1\cup X, P_2-X,\ldots, P_n-X)$. Then $(X)$ is a partial $k$-sequence for $P_1$, and for all $i\in [2,n]$, we observe that $(P_i\cap X)$ is a partial $k$-sequence for $P_i-X$. Then it follows immediately from Corollary \ref{fclcontaincor} that $\fcl_{\mathcal{T}}(P_i')=\fcl_{\mathcal{T}}(P_i)$ for all $i\in [n]$.
\end{proof}

The next result shows that certain concatenations of tight $k$-flowers in $\mathcal{T}$ have no $\mathcal{T}$-loose petals.

\begin{lemma}
\label{tightconcatcond}
Let $\mathcal{T}$ be a tangle of order $k$ in a connectivity system $(E,\lambda)$, and let $\mathcal{S}$ be a tree compatible set. Let $\Phi=(P_1,\ldots,P_n)$ be an $\mathcal{S}$-tight $k$-flower in $\mathcal{T}$ of $\mathcal{S}$-order at least three, and let $j\in [2,n-1]$. If $(P_1\cup \cdots \cup P_j, P_{j+1}\cup \cdots \cup P_n)$ is a $(k, \mathcal{S})$-separation, then the concatenation $\Phi '=(P_1\cup \cdots \cup P_j,  P_{j+1},\ldots, P_n)$ of $\Phi$ has no $\mathcal{T}$-loose petals.
\end{lemma}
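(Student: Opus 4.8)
The plan is to verify directly that no petal of $\Phi'=(P_1\cup\cdots\cup P_j,\,P_{j+1},\ldots,P_n)$ is $\mathcal{T}$-loose, writing $Q=P_1\cup\cdots\cup P_j$ for the new petal. Since $\Phi$ is $\mathcal{S}$-tight, Lemma~\ref{flowereq} shows $\Phi$ has no $\mathcal{T}$-loose petals. Any two consecutive petals of $\Phi'$ other than the pairs $\{Q,P_{j+1}\}$ and $\{Q,P_n\}$ are consecutive petals of $\Phi$, so the absence of loose petals there is inherited from $\Phi$. Thus it suffices to rule out the four containments $Q\subseteq\fcl_{\mathcal{T}}(P_{j+1})$, $Q\subseteq\fcl_{\mathcal{T}}(P_n)$, $P_{j+1}\subseteq\fcl_{\mathcal{T}}(Q)$ and $P_n\subseteq\fcl_{\mathcal{T}}(Q)$.

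The first two are immediate. If $Q\subseteq\fcl_{\mathcal{T}}(P_{j+1})$ then, since $P_j\subseteq Q$, Lemma~\ref{fcloperator} gives $P_j\subseteq\fcl_{\mathcal{T}}(P_{j+1})$; as $P_j$ and $P_{j+1}$ are consecutive in $\Phi$, this makes $P_j$ a loose petal of $\Phi$, a contradiction. The containment $Q\subseteq\fcl_{\mathcal{T}}(P_n)$ is excluded in the same way, using $P_1\subseteq Q$ and the consecutive pair $P_1,P_n$.

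The real work is to rule out $P_{j+1}\subseteq\fcl_{\mathcal{T}}(Q)$; the case $P_n\subseteq\fcl_{\mathcal{T}}(Q)$ then follows by applying it to the relabelling of $\Phi$ given by the reflection $i\mapsto j+1-i$ of the cyclic order, which fixes the arc $P_1,\ldots,P_j$ setwise, interchanges its two external neighbours $P_{j+1}$ and $P_n$, and produces an equivalent flower up to labels. So suppose $P_{j+1}\subseteq\fcl_{\mathcal{T}}(Q)$. Then $Q\cup P_{j+1}=P_1\cup\cdots\cup P_{j+1}$ is a $k$-separating set (a union of consecutive petals) lying inside $\fcl_{\mathcal{T}}(Q)$, so Lemma~\ref{fcloperator} yields $\fcl_{\mathcal{T}}(P_1\cup\cdots\cup P_{j+1})=\fcl_{\mathcal{T}}(Q)$. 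Writing $(A,B)=(Q,\,P_{j+1}\cup\cdots\cup P_n)$ and $(A',B')=(P_1\cup\cdots\cup P_{j+1},\,P_{j+2}\cup\cdots\cup P_n)$, both are $\mathcal{T}$-strong $k$-separations; moreover $(A,B)$ is a $(k,\mathcal{S})$-separation, hence non-sequential, and $(A',B')$ is non-sequential too, since $\fcl_{\mathcal{T}}(A')=\fcl_{\mathcal{T}}(A)\neq E$ and $\fcl_{\mathcal{T}}(B')\subseteq\fcl_{\mathcal{T}}(B)\neq E$ by Lemma~\ref{fcloperator}. As $\fcl_{\mathcal{T}}(A)=\fcl_{\mathcal{T}}(A')$, Lemma~\ref{3.3} shows $(A,B)$ and $(A',B')$ are $\mathcal{T}$-equivalent, whence $\fcl_{\mathcal{T}}(B)=\fcl_{\mathcal{T}}(B')$ as well, and in particular $P_{j+1}\subseteq\fcl_{\mathcal{T}}(P_{j+2}\cup\cdots\cup P_n)$.

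From here the plan is to contradict the $\mathcal{S}$-tightness of $\Phi$: the strong petal $P_{j+1}$ can be pushed, up to $\mathcal{T}$-equivalence, entirely onto either side of the separation it defines, so it ought to be redundant as a petal, and one aims to exhibit a flower $\mathcal{T}$-equivalent to $\Phi$ with fewer than $n$ petals. I expect this to be the main obstacle, for two reasons. First, $P_{j+1}$ is $\mathcal{T}$-strong, so it cannot be absorbed by a single weak set, and the single-petal tools Lemmas~\ref{tightflowerwin} and~\ref{flowereq3} are not directly available for the multi-petal block $Q$: carrying out the proof of Lemma~\ref{tightflowerwin} with $Q$ in the role of $P_1$ breaks down at exactly the step that would assert $P_{j+1}\subseteq\fcl_{\mathcal{T}}(Q)$. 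Second, repeated application of Lemma~\ref{flowereq3} preserves the full closure of every petal and so cannot manufacture a loose petal, meaning the reduction in the number of petals must come from a genuine use of tightness rather than from the equivalences established so far. The delicate point is therefore to exploit the two containments $P_{j+1}\subseteq\fcl_{\mathcal{T}}(Q)$ and $P_{j+1}\subseteq\fcl_{\mathcal{T}}(P_{j+2}\cup\cdots\cup P_n)$, through a careful partial-$k$-sequence and uncrossing argument inside $\Phi$, to produce the required smaller equivalent flower and thereby contradict tightness.
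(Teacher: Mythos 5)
Your opening reduction is correct and matches the paper's own first step: since the $\mathcal{S}$-tight flower $\Phi$ has no $\mathcal{T}$-loose petals (Lemma~\ref{flowereq}), the containments $P_J\subseteq\fcl_{\mathcal{T}}(P_{j+1})$ and $P_J\subseteq\fcl_{\mathcal{T}}(P_n)$ (writing $P_J=P_1\cup\cdots\cup P_j$) are impossible, as by Lemma~\ref{fcloperator} they would make $P_j$, respectively $P_1$, a loose petal of $\Phi$; and by symmetry only $P_{j+1}\subseteq\fcl_{\mathcal{T}}(P_J)$ need be considered. Your intermediate deductions from that containment are also sound. But at exactly the point where all the real content of the lemma lies, your proof stops: you announce that a ``careful partial-$k$-sequence and uncrossing argument'' should produce a smaller equivalent flower and so contradict $\mathcal{S}$-tightness, but you do not supply that argument, and you say yourself that you expect it to be the main obstacle. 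That is a genuine gap, not a missing detail: without it nothing has been proved.

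The missing idea is to \emph{localise} the hypothesis $P_{j+1}\subseteq\fcl_{\mathcal{T}}(P_J)$ to the single adjacent petal $P_j$, thereby exhibiting a loose petal of $\Phi$ itself rather than trying to build a smaller flower from scratch. Take a maximal partial $k$-sequence $(X_i)_{i=1}^m$ for $P_J$; by Lemma~\ref{maxsequence}, $\fcl_{\mathcal{T}}(P_J)=P_J\cup(\bigcup_{i=1}^m X_i)$, so $P_{j+1}\subseteq\bigcup_{i=1}^m X_i$. Since $P_J\cup P_{j+1}\cup X_1\subseteq\fcl_{\mathcal{T}}(P_J)$ and $E-\fcl_{\mathcal{T}}(P_J)$ is $\mathcal{T}$-strong, (T2) gives $\lambda(P_J\cup P_{j+1}\cup X_1)\geq k$; uncrossing the $k$-separating sets $P_J\cup X_1$ and $P_j\cup P_{j+1}$, whose union is $P_J\cup P_{j+1}\cup X_1$ and whose intersection is $P_j\cup(P_{j+1}\cap X_1)$, shows that $P_j\cup(P_{j+1}\cap X_1)$ is $k$-separating. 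Iterating with $P_J\cup X_1\cup\cdots\cup X_{\ell}$ in place of $P_J\cup X_1$ shows that $P_j\cup(\bigcup_{i=1}^{\ell}(P_{j+1}\cap X_i))$ is $k$-separating for every $\ell\in[m]$, so $(P_{j+1}\cap X_i)_{i=1}^m$, after discarding empty terms, is a partial $k$-sequence for $P_j$, and Lemma~\ref{fclcontain} yields $P_{j+1}\subseteq\fcl_{\mathcal{T}}(P_j)$. Thus $P_{j+1}$ is a $\mathcal{T}$-loose petal of $\Phi$, contradicting what you already established. Note also that your detour through Lemma~\ref{3.3}, which produces $P_{j+1}\subseteq\fcl_{\mathcal{T}}(P_{j+2}\cup\cdots\cup P_n)$, points in an unhelpful direction: containment in the full closure of a multi-petal block is not a loose-petal statement, and none of the paper's machinery converts it into one; the productive direction is containment in the closure of the single neighbouring petal $P_j$.
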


\begin{proof}
Suppose that $(P_1\cup \cdots \cup P_j, P_{j+1}\cup \cdots \cup P_n)$ is a $(k,\mathcal{S})$-separation. Let $J=[j]$, and let $\Phi '=(P_{J},  P_{j+1},\ldots, P_n)$. If $j=n-1$, then the lemma immediately holds, so we may assume that $j<n-1$. Seeking a contradiction, suppose that $\Phi '$ has a $\mathcal{T}$-loose petal. Then $\Phi$ has no $\mathcal{T}$-loose petals by Lemma \ref{flowereq}, so we may assume that $P_{j+1}\subseteq \fcl_{\mathcal{T}}(P_{J})$. Let $(X_i)_{i=1}^m$ be a maximal partial $k$-sequence for $P_{J}$. Then $P_{j+1}\subseteq \bigcup_{i=1}^m X_i$. The partition $(P_{J}\cup P_{j+1}\cup X_1, E-(P_{J}\cup P_{j+1}\cup X_1))$ is $\mathcal{T}$-strong because $P_{J}\cup P_{j+1}\cup X_1$ contains the $\mathcal{T}$-strong set $P_J$ and $E-(P_{J}\cup P_{j+1}\cup X_1)$ contains the $\mathcal{T}$-strong set $E-\fcl_{\mathcal{T}}(P_{J})$. Thus $\lambda(P_{J}\cup P_{j+1}\cup X_1)\geq k$ by (T2). It now follows from uncrossing the sets $P_{J}\cup X_1$ and $P_j\cup P_{j+1}$, whose union is $P_{J}\cup P_{j+1}\cup X_1$, that $P_j\cup (P_{j+1}\cap X_1)$ is $k$-separating. This process can clearly be repeated, so that $P_j\cup (\bigcup_{i=1}^{\ell} (P_{j+1}\cap X_i))$ is $k$-separating for all $\ell\in [m]$. Then, up to removing any empty terms, the sequence $(P_{j+1}\cap X_i)_{i=1}^m$ is a partial $k$-sequence for $P_j$, so $P_{j+1}\subseteq \fcl_{\mathcal{T}}(P_j)$ by Lemma \ref{fclcontain}; a contradiction because $\Phi$ has no $\mathcal{T}$-loose petals. 
\end{proof}

The following lemma is useful for locating $(k,\mathcal{S})$-separations displayed by a $k$-flower in $\mathcal{T}$.

\begin{lemma}
\label{confconcatcond}
Let $\mathcal{T}$ be a tangle of order $k$ in a connectivity system $(E,\lambda)$, and let $\mathcal{S}$ be a tree compatible set. Let $\Phi=(P_1,\ldots,P_n)$ be a $k$-flower in $\mathcal{T}$ of $\mathcal{S}$-order at least three, and let $(A,B)$ and $(C,D)$ be inequivalent $(k,\mathcal{S})$-separations of $\lambda$ that are displayed by $\Phi$. If $(A',B')$ is a $(k,\mathcal{S})$-separation that is $\mathcal{T}$-equivalent to $(A,B)$ with $\fcl_{\mathcal{T}}(A)=\fcl_{\mathcal{T}}(A')$, and $A'\subseteq C$, then there is a $(k,\mathcal{S})$-separation $(A'',B'')$ that is $\mathcal{T}$-equivalent to $(A,B)$ and displayed by $\Phi$ with $A''\subseteq C$. 
\end{lemma}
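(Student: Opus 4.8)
The plan is to take $A'' = A \cap C$ and to verify directly that $(A'', E - A'')$ does the job, using Lemma \ref{bigeqlemma}(iv) to obtain $\mathcal{T}$-equivalence with $(A,B)$ and then (S1) to upgrade it to a $(k,\mathcal{S})$-separation. Since $(A,B)$ and $(C,D)$ are displayed by $\Phi$, the sets $A$ and $C$ are unions of petals, so $A'' = A \cap C$ is a union of petals and is therefore displayed by $\Phi$; moreover $A'' \subseteq C$ is immediate. Thus the real content is to check that $A''$ is a non-empty $k$-separating set and that $(A'', E-A'')$ is $\mathcal{T}$-equivalent to $(A,B)$.

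First I would pin down the full closures. Because $(A,B)$ is non-sequential, $\fcl_{\mathcal{T}}(A) = \fcl_{\mathcal{T}}(B)$ is impossible (it would force $\fcl_{\mathcal{T}}(A) \supseteq A\cup B = E$, hence $\fcl_{\mathcal{T}}(B)=E$, contradicting non-sequentiality), so from $\fcl_{\mathcal{T}}(A)=\fcl_{\mathcal{T}}(A')$ and $(A',B')\equiv(A,B)$ I get $\fcl_{\mathcal{T}}(B')=\fcl_{\mathcal{T}}(B)$. Since $A'\subseteq C$ we have $D = E-C \subseteq E-A' = B'$, so $\fcl_{\mathcal{T}}(D)\subseteq \fcl_{\mathcal{T}}(B')=\fcl_{\mathcal{T}}(B)$ by Lemma \ref{fcloperator}. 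Hence $B\cup D\subseteq \fcl_{\mathcal{T}}(B)$, and taking complements gives $E-\fcl_{\mathcal{T}}(B)\subseteq E-(B\cup D)=A\cap C = A''$. As $(A,B)$ is non-sequential, $\fcl_{\mathcal{T}}(B)\neq E$, so $A''$ is non-empty; and clearly $A''\subseteq A$.

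The hard part will be showing that $A''$ is $k$-separating. By Corollary \ref{flowerclass}, $\Phi$ is a $k$-anemone or a $k$-daisy. In the anemone case there is nothing to prove, since every union of petals is $k$-separating. In the daisy case, $A = P_{I_A}$ and $C = P_{I_C}$ for \emph{consecutive} index sets $I_A, I_C$, and I must show $I_A\cap I_C$ is again consecutive. Here the hypotheses bite: I would show $A\cup C\neq E$, for otherwise $D\subseteq A\subseteq \fcl_{\mathcal{T}}(A)=\fcl_{\mathcal{T}}(A')\subseteq \fcl_{\mathcal{T}}(C)$ (using $A'\subseteq C$ and Lemma \ref{fcloperator}), giving $\fcl_{\mathcal{T}}(C)\supseteq C\cup D = E$ and contradicting the non-sequentiality of $(C,D)$. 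Now $I_A, I_C$ are consecutive, $I_A\cap I_C\neq\emptyset$, and $I_A\cup I_C\neq[n]$; cutting the cyclic order at an index outside $I_A\cup I_C$ realises $I_A$ and $I_C$ as intervals on a line, whose intersection is a single interval. Thus $I_A\cap I_C$ is consecutive and $A'' = P_{I_A\cap I_C}$ is $k$-separating.

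Finally I would assemble the pieces. From $E-\fcl_{\mathcal{T}}(B)\subseteq A''\subseteq A$ together with $A''$ being $k$-separating, Lemma \ref{bigeqlemma}(iv) yields that $(A'', E-A'')$ is a $\mathcal{T}$-strong $k$-separation that is $\mathcal{T}$-equivalent to $(A,B)$. Since $(A,B)$ is a $(k,\mathcal{S})$-separation, (S1) then shows $(A'', E-A'')$ is a $(k,\mathcal{S})$-separation, and by construction it is displayed by $\Phi$ with $A''\subseteq C$, as required. I expect the daisy case of the $k$-separating step to be the only genuinely delicate point; everything else is routine bookkeeping with full closures via the lemmas of Sections \ref{ksep} and \ref{flowtang}.
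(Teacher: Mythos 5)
Your proposal is correct and follows essentially the same route as the paper's proof: the paper likewise takes $A''=A\cap C$ (there written $P_K$ with $K=I\cap J$), shows $E-\fcl_{\mathcal{T}}(B)\subseteq A\cap C$ to get non-emptiness, and concludes via Lemma~\ref{bigeqlemma}(iv) and (S1). The only difference is that the paper dismisses the claim that $A\cap C$ is $k$-separating as ``easily seen,'' whereas you spell out the daisy case, including the necessary observation that $A\cup C\neq E$ --- a worthwhile detail, since without it the intersection of two cyclically consecutive index sets need not be consecutive.
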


\begin{proof}
Suppose that $(A',B')$ is a $(k,\mathcal{S})$-separation that is $\mathcal{T}$-equivalent to $(A,B)$ with $\fcl_{\mathcal{T}}(A)=\fcl_{\mathcal{T}}(A')$, and that $A'\subseteq C$. We may assume, up to labels, that $C=P_1\cup \dots \cup P_j$ for some $j\in [n-1]$. Let $I\subseteq [n]$ be the set of indices such that $P_I=A$, and let $J=[j]$. Since $(A,B)$ and $(A',B')$ are, in particular, non-sequential $k$-separations and $\fcl_{\mathcal{T}}(A)=\fcl_{\mathcal{T}}(A')$, it follows that $A$ meets $A'$. Thus $K=I\cap J\subseteq [n]$ is a non-empty set of indices. Moreover, it is easily seen that $P_K$ is a $k$-separating set with $P_K\subseteq C$. Now from the fact that $E-\fcl_{\mathcal{T}}(B)\subseteq A\cap A'\subseteq P_K\subseteq A$, it follows that $(P_K, E-P_K)$ is a $k$-separation displayed by $\Phi$ that is $\mathcal{T}$-equivalent to $(A,B)$ Lemma \ref{bigeqlemma} (iv), and thus $(P_K, E-P_K)$ is a $(k,\mathcal{S})$-separation by (S1). 
\end{proof}

\subsection*{Flowers in vertically $k$-connected matroids}
Let $M$ be a vertically $k$-connected matroids where $r(M)\geq \max\{3k-5,2\}$. Then a 
flower relative to $\tangle_k$ is a partition $(P_1,\ldots,P_n)$ such that $r(P_i)\geq k-1$,
and both $P_i$ and $P_i\cup P_{i+1}$ are $k$-separating for all $i$. In particular,
if $M$ is strictly $k$-connected we can replace the condition $r(P_i)\geq k-1$ by the
condition that $|P_i|\geq k-1$. Via these interpretations, all of the lemmas of this 
section have straightforward specialisations for flowers in vertically $k$-connected
matroids.

\section{Conformity}
\label{conf}

Let $\mathcal{T}$ be a tangle of order $k$ in a connectivity system $(E,\lambda)$, and let $\mathcal{S}$ be a tree compatible set. A $k$-flower in $\mathcal{T}$ is $\mathcal{S}$-\textit{maximal} if it is maximal in the quasi-order $\preccurlyeq_{\mathcal{S}}$. The main goal of this section is to prove Theorem \ref{TMkflowerconf}, which, loosely stated, is to show that if $\mathcal{T}$ is a tangle of order $k$ that satisfies a certain robustness condition, then every $(k,\mathcal{S})$-separation ``conforms'' with an $\mathcal{S}$-tight $\mathcal{S}$-maximal flower in $\mathcal{T}$. We first study how the $(k,\mathcal{S})$-separations interact with $k$-flowers in $\mathcal{T}$, and we develop the necessary lemmas to prove Theorem \ref{TMkflowerconf}.

To avoid cumbersome statements we assume for the remainder of this section that $\mathcal{T}$ is a tangle of order $k$ in a connectivity system $(E,\lambda)$, and that $\mathcal{S}$ is a tree compatible set. 

We begin with the following easy lemma about certain subsets of petals of a $k$-flower $\Phi$ in $\mathcal{T}$.

\begin{lemma}
\label{strpetal}
Let $n\geq 2$, and let $\Phi=(P_1,\ldots,P_n)$ be a $k$-flower in $\mathcal{T}$. Let $I$ be a proper non-empty subset of $[n]$. 
\begin{enumerate}
 \item[(i)] If $X\subseteq P_I$ is a $\mathcal{T}$-strong set, then $\lambda(X)\geq k$.
 \item[(ii)] If $X\subseteq P_I$ and $\lambda(X)<k$, then $X\in \mathcal{T}$. 
\end{enumerate}
\end{lemma}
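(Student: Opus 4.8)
The statement to prove is Lemma~\ref{strpetal}, which asserts that for a $k$-flower $\Phi=(P_1,\ldots,P_n)$ with $n\geq 2$ and a proper non-empty subset $I$ of $[n]$: (i) any $\mathcal{T}$-strong set $X\subseteq P_I$ satisfies $\lambda(X)\geq k$, and (ii) any $X\subseteq P_I$ with $\lambda(X)<k$ lies in $\mathcal{T}$. The plan is to prove (i) directly using property (T2) of the tangle together with the fact that $P_I$ is a $k$-separating set, and then to deduce (ii) as an immediate contrapositive of (i).

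\textbf{Proof of (i).} First I would observe that, since $I$ is a proper non-empty subset of $[n]$ and each petal $P_i$ is $\mathcal{T}$-strong, the complement $E-X$ contains the $\mathcal{T}$-strong set $P_j$ for any $j\in [n]-I$ (such a $j$ exists because $I$ is proper); hence $E-X$ is $\mathcal{T}$-strong, as supersets of $\mathcal{T}$-strong sets are $\mathcal{T}$-strong. Since $X$ is $\mathcal{T}$-strong by hypothesis, the partition $(X, E-X)$ is $\mathcal{T}$-strong. As noted in Section~\ref{basics} (the observation following the definition of $\mathcal{T}$-strong partitions), a $\mathcal{T}$-strong partition $(X,E-X)$ forces $\lambda(X)\geq k$ by (T2): neither $X$ nor $E-X$ can be a member of $\mathcal{T}$, so neither side of the partition gives a $(k-1)$-separation, whence $\lambda(X)\geq k$. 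This is exactly the claim.

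\textbf{Proof of (ii).} This follows by contraposition from (i). Suppose $X\subseteq P_I$ with $\lambda(X)<k$. If $X$ were $\mathcal{T}$-strong, then (i) would give $\lambda(X)\geq k$, a contradiction. Hence $X$ is $\mathcal{T}$-weak, so $X$ is contained in some member of $\mathcal{T}$. To conclude $X\in\mathcal{T}$ itself, I would note that $\lambda(X)<k$ means $X$ satisfies (T1), and that a $\mathcal{T}$-weak set $X$ with $\lambda(X)<k$ must actually belong to $\mathcal{T}$: indeed $(X,E-X)$ is at worst a $(k-1)$-separation, and since $E-X$ is $\mathcal{T}$-strong (again because it contains a petal $P_j$ with $j\notin I$), we have $E-X\notin\mathcal{T}$, so (T2) forces $X\in\mathcal{T}$.

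\textbf{Anticipated obstacle.} The only real subtlety is the very last step of (ii): concluding $X\in\mathcal{T}$ rather than merely ``$X$ is $\mathcal{T}$-weak.'' The delicate point is whether $\lambda(X)<k$ genuinely yields a $(k-1)$-separation $(X,E-X)$ so that (T2) applies. Since $\lambda$ is integer-valued, $\lambda(X)<k$ means $\lambda(X)\leq k-1$, so $(X,E-X)$ is indeed a $(k-1)$-separation and (T2) places $X$ or $E-X$ in $\mathcal{T}$; as $E-X$ is $\mathcal{T}$-strong it cannot be in $\mathcal{T}$, forcing $X\in\mathcal{T}$. The key input throughout is that $I$ is a \emph{proper} subset, guaranteeing a petal outside $I$ whose $\mathcal{T}$-strength propagates to $E-X$; I expect this to be the one place where the hypotheses must be invoked carefully.
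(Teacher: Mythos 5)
Your proof is correct and follows essentially the same argument as the paper: in both parts the key step is that $E-X$ contains a petal $P_j$ with $j\in[n]-I$, making $E-X$ a $\mathcal{T}$-strong set, after which (T2) yields $\lambda(X)\geq k$ in (i) and forces $X\in\mathcal{T}$ in (ii). The contrapositive detour at the start of your part (ii) is harmless but unnecessary, since the direct (T2) argument you give afterwards is all that is needed.
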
  

\begin{proof}
For (i), suppose that $X\subseteq P_I$ is a $\mathcal{T}$-strong set. Then there is some $i\in [n]-I$, so $P_i\subseteq E-X$. Thus $(X,E-X)$ is a $\mathcal{T}$-strong partition, so $\lambda(X)\geq k$ by (T2).

For (ii), suppose that $X\subseteq P_I$ and $\lambda(X)<k$. Then $X$ or $E-X$ belongs to $\mathcal{T}$ by (T2), and $E-X$ is $\mathcal{T}$-strong, so $X\in \mathcal{T}$. 
\end{proof}

Let $n\geq 2$, and let $\Phi=(P_1,\ldots, P_n)$ be a $k$-flower in $\mathcal{T}$. Let $I\subseteq [n]$ be a proper non-empty set. A $k$-separation $(R,G)$ of $\lambda$ \textit{crosses} a union of petals $P_I$ of $\Phi$ if both $P_I\cap R$ and $P_I\cap G$ are non-empty sets. We say that $P_I$ is \textit{strongly crossed} by $(R,G)$ if both $P_I\cap R$ and $P_I\cap G$ are $\mathcal{T}$-strong sets, and that $P_I$ is \textit{weakly crossed} by $(R,G)$ if both $P_I\cap R$ and $P_I\cap G$ are $\mathcal{T}$-weak sets. 

A $\mathcal{T}$-strong $k$-separation $(R,G)$ is said to \textit{conform} with a $k$-flower $\Phi$ in $\mathcal{T}$ if either $(R,G)$ is $\mathcal{T}$-equivalent to a $k$-separation that is displayed by $\Phi$ or $(R,G)$ is $\mathcal{T}$-equivalent to a $k$-separation $(R',G')$ with the property that $R'$ or $G'$ is contained in a petal of $\Phi$.

Let $\Phi$ be a $k$-flower in $\mathcal{T}$, and let $(R,G)$ be a $(k,\mathcal{S})$-separation that does not conform with $\Phi$. Then it is easy to see that $(R,G)$ crosses some petal of $\Phi$. We would like show that there is a $k$-flower that both refines $\Phi$ and displays a $(k,\mathcal{S})$-separation that is $\mathcal{T}$-equivalent to $(R,G)$. 

A $\mathcal{T}$-strong $k$-separation $(R,G)$ called \textit{$\Phi$-minimum} if, among the $k$-separations that are $\mathcal{T}$-equivalent to $(R,G)$, it crosses a minimum number of petals of $\Phi$.

\begin{lemma}
\label{cross2}
Let $n\geq 2$, and let $\Phi=(P_1,\ldots, P_n)$ be a $k$-flower in $\mathcal{T}$. Let $I$ be a proper non-empty subset of $[n]$ such that $P_I$ is a $k$-separating, and let $(R,G)$ be a $\Phi$-minimum $(k,\mathcal{S})$-separation that crosses $P_I$. 
\begin{enumerate}
 \item[(i)] If $\lambda(P_I\cap R)\geq k$, then $P_I\cap G$ is $\mathcal{T}$-strong. 
 \item[(ii)] If $P_I$ is weakly crossed by $(R,G)$, then $P_I\cap G$ and $P_I\cap R$ are both members of $\mathcal{T}$. 
 \item[(iii)] If $P_I$ is weakly crossed by $(R,G)$, then $P_I$ is a sequential $k$-separating set.
\end{enumerate}
\end{lemma}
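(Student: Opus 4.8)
Looking at Lemma \ref{cross2}, I need to prove three statements about how a $\Phi$-minimum $(k,\mathcal{S})$-separation $(R,G)$ interacts with a $k$-separating union of petals $P_I$ that it crosses. Let me sketch the approach.

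\textbf{The plan for (i).} I would argue by contradiction, assuming $\lambda(P_I \cap R) \geq k$ but $P_I \cap G$ is $\mathcal{T}$-weak, hence a member of $\mathcal{T}$ by Lemma \ref{strpetal}(ii) (since $P_I \cap G \subseteq P_I$ with $\lambda(P_I \cap G) < k$, as a $\mathcal{T}$-weak subset of $P_I$). The natural move is to try to absorb $P_I \cap G$ into $R$, producing a $\mathcal{T}$-equivalent separation that crosses fewer petals, contradicting $\Phi$-minimality. First I would show $R \cup (P_I \cap G)$ is $k$-separating: it is the union of the $k$-separating sets $R$ and $P_I$, whose intersection is $P_I \cap R$ with $\lambda(P_I \cap R) \geq k$, so by uncrossing $R$ and $P_I$ the union $R \cup P_I$ is $k$-separating; then intersecting appropriately, or directly uncrossing, gives that $R \cup (P_I \cap G)$ is $k$-separating. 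Since $P_I \cap G$ is $\mathcal{T}$-weak, I would apply Lemma \ref{bigeqlemma}(ii) to conclude $(R \cup (P_I \cap G), G - (P_I \cap G))$ is $\mathcal{T}$-equivalent to $(R,G)$. But this new separation no longer crosses $P_I$ (all of $P_I$ now lies in $R \cup (P_I \cap G)$), and it crosses no petal that $(R,G)$ did not already cross, so it crosses strictly fewer petals---contradicting $\Phi$-minimality.

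\textbf{The plan for (ii).} Suppose $P_I$ is weakly crossed, so both $P_I \cap R$ and $P_I \cap G$ are $\mathcal{T}$-weak. By Lemma \ref{strpetal}(ii), each is a member of $\mathcal{T}$ provided I can show each has $\lambda < k$. A $\mathcal{T}$-weak set is contained in a member of $\mathcal{T}$, but to invoke Lemma \ref{strpetal}(ii) I need $\lambda(P_I \cap R) < k$ directly. I expect this is the crux: by part (i), if $\lambda(P_I \cap R) \geq k$ then $P_I \cap G$ would be $\mathcal{T}$-strong, contradicting weak crossing; symmetrically for $\lambda(P_I \cap G)$. Hence both $\lambda(P_I \cap R) < k$ and $\lambda(P_I \cap G) < k$, and Lemma \ref{strpetal}(ii) gives $P_I \cap R, P_I \cap G \in \mathcal{T}$ directly.

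\textbf{The plan for (iii).} Here I want to show $P_I$ is a sequential $k$-separating set, i.e. $E - P_I$ is $\mathcal{T}$-strong and $\fcl_{\mathcal{T}}(E - P_I) = E$. The idea is to build a partial $k$-sequence for $E - P_I$ that fills up all of $E$, using the two $\mathcal{T}$-weak halves $P_I \cap R$ and $P_I \cap G$ from part (ii). I would first verify $E - P_I$ is $k$-separating (it equals $P_{[n] \setminus I}$, which is $k$-separating by symmetry since $P_I$ is) and $\mathcal{T}$-strong (it contains a full petal, as $I$ is proper). Then I would show that adjoining $P_I \cap R$ (a nonempty $\mathcal{T}$-weak set, nonempty since $(R,G)$ crosses $P_I$) to $E - P_I$ yields a $k$-separating set: $(E - P_I) \cup (P_I \cap R) = E - (P_I \cap G)$ is $k$-separating because $P_I \cap G \in \mathcal{T}$ has $\lambda < k$ and $\lambda$ is symmetric. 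Then adjoining the remaining piece $P_I \cap G$ exhausts $E$. Thus $(P_I \cap R, P_I \cap G)$ is a partial $k$-sequence for $E - P_I$ with union $E$, so $\fcl_{\mathcal{T}}(E - P_I) = E$ by Lemma \ref{fclcontain}, making $P_I$ sequential. The main obstacle throughout is correctly tracking which uncrossings are licensed---that is, verifying at each step that the relevant intersection or complement is $\mathcal{T}$-strong (hence has $\lambda \geq k$ by (T2)) so that Lemma \ref{uncrossing2} applies; part (i) is where $\Phi$-minimality does the essential work and is the step I expect to require the most care.
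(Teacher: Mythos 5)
Your proposal is correct and follows essentially the same route as the paper's proof: part (i) by uncrossing $P_I$ and $R$ (licensed by $\lambda(P_I\cap R)\geq k$) and then applying Lemma \ref{bigeqlemma}(ii) to get an equivalent separation $(R\cup P_I, G-P_I)$ crossing fewer petals, contradicting $\Phi$-minimality; part (ii) exactly as the paper does, using the contrapositive of (i) to force $\lambda(P_I\cap R),\lambda(P_I\cap G)<k$ and then Lemma \ref{strpetal}(ii); and part (iii) via the same two-term partial $k$-sequence for $E-P_I$ (just in the opposite order) together with Lemma \ref{fclcontain}. One harmless slip: the parenthetical in your plan for (i) asserting that a $\mathcal{T}$-weak subset of $P_I$ automatically has $\lambda<k$ (hence lies in $\mathcal{T}$) is unjustified---as you yourself note in part (ii), weakness does not bound $\lambda$---but that remark is never used in your argument for (i), which only needs $\mathcal{T}$-weakness itself.
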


\begin{proof}
For (i), assume that $\lambda(P_I\cap R)\geq k$. Seeking a contradiction, suppose that $P_I\cap G$ is $\mathcal{T}$-weak. Then $P_I\cup R$ is $k$-separating by uncrossing $P_I$ and $R$, so $(R,G)$ is $\mathcal{T}$-equivalent to $(R\cup P_I,G-P_I)$ by Lemma \ref{bigeqlemma}. But $(R\cup P_I,G-P_I)$ crosses fewer petals of $\Phi$ than $(R,G)$; a contradiction because $(R,G)$ is $\Phi$-minimum.

For (ii), we first show that $\lambda(P_I\cap R)<k$ and $\lambda(P_I\cap G)<k$. Seeking a contradiction, assume, up to switching $R$ and $G$, that $\lambda(P_I\cap R)\geq k$. Then $P_I\cap G$ is $\mathcal{T}$-strong by (i); a contradiction because $P_I$ is weakly crossed by $(R,G)$. Thus $\lambda(P_I\cap R)<k$ and $\lambda(P_I\cap G)<k$. It now follows from Lemma \ref{strpetal} (ii) that $P_I\cap G, P_I\cap R\in \mathcal{T}$. 

For (iii), observe that $E-P_I$ is a $\mathcal{T}$-strong $k$-separating set because $I$ is a proper non-empty subset of $[n]$ such that $P_I$ is $k$-separating. Then $P_I\cap G$ and $P_I\cap R$ are $\mathcal{T}$-weak sets by (ii), and moreover $\lambda((E-P_I)\cup (P_I\cap G))=\lambda(P_I\cap R)<k$ by (ii), so $(P_I\cap G, P_I\cap R)$ is a partial $k$-sequence for $E-P_I$. Thus $P_I\subseteq \fcl_{\mathcal{T}}(E-P_I)$ by Lemma \ref{fclcontain}, and so $\fcl_{\mathcal{T}}(E-P_I)=E$.
\end{proof}

The next lemma shows that a $k$-separating proper non-empty union of petals of $\Phi$ is either strongly or weakly crossed by a $\Phi$-minimum $(k,\mathcal{S})$-separation $(R,G)$.

\begin{lemma}
\label{cross4}
Let $n\geq 2$, and let $\Phi=(P_1,\ldots, P_n)$ be a $k$-flower in $\mathcal{T}$. Let $I$ be a proper non-empty subset of $[n]$ such that $P_I$ is $k$-separating. If $(R,G)$ is a $\Phi$-minimum $(k,\mathcal{S})$-separation that crosses $P_I$, then $P_I$ is either strongly or weakly crossed. 
\end{lemma}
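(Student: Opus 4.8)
The goal is to rule out the "mixed" case: one of $P_I\cap R, P_I\cap G$ being $\mathcal{T}$-strong and the other $\mathcal{T}$-weak. The plan is to suppose for contradiction that $P_I$ is neither strongly nor weakly crossed, so that, up to switching $R$ and $G$, the set $P_I\cap R$ is $\mathcal{T}$-strong while $P_I\cap G$ is $\mathcal{T}$-weak. The first step is to record what strongness of $P_I\cap R$ buys us: since $P_I\cap R\subseteq P_I$ and $I$ is a proper non-empty subset of $[n]$, Lemma \ref{strpetal}(i) gives $\lambda(P_I\cap R)\geq k$.

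Having $\lambda(P_I\cap R)\geq k$ puts us exactly in the hypothesis of Lemma \ref{cross2}(i), which then forces $P_I\cap G$ to be $\mathcal{T}$-strong. This directly contradicts our assumption that $P_I\cap G$ is $\mathcal{T}$-weak. Hence the mixed case cannot occur, and the only remaining possibilities are that both $P_I\cap R$ and $P_I\cap G$ are $\mathcal{T}$-strong (strongly crossed) or both are $\mathcal{T}$-weak (weakly crossed). I would also note at the outset that because $(R,G)$ crosses $P_I$, both $P_I\cap R$ and $P_I\cap G$ are non-empty, so each of "$\mathcal{T}$-strong" and "$\mathcal{T}$-weak" is a meaningful dichotomy on these non-empty sets and the trichotomy (strong/weak/mixed) is exhaustive.

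The argument is therefore essentially a one-line reduction to the two lemmas already proved, and there is no real obstacle: the work has been front-loaded into Lemma \ref{strpetal}(i) (strong subsets of a proper union of petals have $\lambda\geq k$) and Lemma \ref{cross2}(i) (which uses $\Phi$-minimality via an uncrossing and Lemma \ref{bigeqlemma} to preclude $\lambda(P_I\cap R)\geq k$ coexisting with $P_I\cap G$ being $\mathcal{T}$-weak). The only subtlety to flag is the symmetric treatment of $R$ and $G$: since a $k$-separation is unordered, I should phrase the contradiction hypothesis as "up to switching $R$ and $G$" so that the single application of Lemma \ref{cross2}(i) covers both orientations of the mixed case.
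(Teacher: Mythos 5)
Your proof is correct and takes essentially the same route as the paper: both hinge on combining Lemma \ref{strpetal}(i) with Lemma \ref{cross2}(i) to show that a $\mathcal{T}$-strong side of the crossing forces $\lambda \geq k$, which in turn forces the other side to be $\mathcal{T}$-strong. The paper organizes this as a direct case analysis on whether $\lambda(P_I\cap R)$ and $\lambda(P_I\cap G)$ are below $k$ (invoking Lemma \ref{strpetal}(ii) for the weakly crossed case), whereas you rule out the mixed case by contradiction, but the mathematical content is identical.
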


\begin{proof}
Assume that $(R,G)$ is a $\Phi$-minimum $(k,\mathcal{S})$-separation that crosses $P_I$. If $\lambda(P_I\cap R)<k$ and $\lambda(P_I\cap G)<k$, then $(R,G)$ weakly crosses $P_I$ by Lemma \ref{strpetal} (ii). Thus we may assume, up to switching $R$ and $G$, that $\lambda(P_I\cap R)\geq k$. Then $P_I\cap G$ is $\mathcal{T}$-strong by Lemma \ref{cross2} (i). Thus $\lambda(P_I\cap G)\geq k$ by Lemma \ref{strpetal} (i). Then $P_I\cap R$ is also $\mathcal{T}$-strong by Lemma \ref{cross2}(i). Therefore $(R,G)$ strongly crosses $P_I$. 
\end{proof}

The next lemma shows that if $(R,G)$ is a $(k,\mathcal{S})$-separation does not conform with an $\mathcal{S}$-tight $k$-flower $\Phi$ in $\mathcal{T}$ of $\mathcal{S}$-order two, then there is a $k$-flower in $\mathcal{T}$ that refines $\Phi$ and displays a $(k,\mathcal{S})$-separation that is $\mathcal{T}$-equivalent to $(R,G)$.

\begin{lemma}
\label{ref1}
 Let $\Phi=(P_1,P_2)$ be an $\mathcal{S}$-tight $k$-flower in $\mathcal{T}$. If $(R,G)$ is a $(k,\mathcal{S})$-separation that does not conform with $\Phi$, then there is a $k$-flower $\Phi '$ that refines $\Phi$ and displays a $(k,\mathcal{S})$-separation that is $\mathcal{T}$-equivalent to $(R,G)$.
\end{lemma}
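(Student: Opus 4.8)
The plan is to reduce to a $\Phi$-minimum representative of the $\mathcal{T}$-equivalence class of $(R,G)$ and then write down the forced four-petal refinement. First I would note that, since $\Phi$ is $\mathcal{S}$-tight, it is not $\mathcal{T}$-equivalent to a single-petal flower, so $(P_1,P_2)$ must be a $(k,\mathcal{S})$-separation; in particular $(P_1,P_2)$ is non-sequential, so $\fcl_{\mathcal{T}}(P_1)\neq E$ and $\fcl_{\mathcal{T}}(P_2)\neq E$. By (S1), every $\mathcal{T}$-strong $k$-separation that is $\mathcal{T}$-equivalent to $(R,G)$ is again a $(k,\mathcal{S})$-separation, and non-conformity depends only on the $\mathcal{T}$-equivalence class; so I may assume without loss of generality that $(R,G)$ is $\Phi$-minimum. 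As $(R,G)$ does not conform with $\Phi$, it must cross both petals: if some petal were not crossed, say $P_1\subseteq R$, then $G\subseteq P_2$ and $(R,G)$ would conform. In particular all four sets $R\cap P_1$, $G\cap P_1$, $R\cap P_2$, $G\cap P_2$ are non-empty.

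Next I would show that both petals are strongly crossed. Applying Lemma \ref{cross4} with $I=\{1\}$ and again with $I=\{2\}$, each of $P_1,P_2$ is either strongly or weakly crossed by $(R,G)$. If $P_1$ were weakly crossed, then Lemma \ref{cross2}(iii) would give $\fcl_{\mathcal{T}}(E-P_1)=\fcl_{\mathcal{T}}(P_2)=E$, contradicting the non-sequentiality of $(P_1,P_2)$; the symmetric argument rules out weak crossing of $P_2$. Hence both petals are strongly crossed, so each of $R\cap P_1$, $G\cap P_1$, $R\cap P_2$, $G\cap P_2$ is a $\mathcal{T}$-strong set.

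Finally I would take $\Phi'=(R\cap P_1,\,G\cap P_1,\,G\cap P_2,\,R\cap P_2)$. This is a $\mathcal{T}$-strong partition of $E$ into four non-empty parts, and its three consecutive unions $(R\cap P_1)\cup(G\cap P_1)=P_1$, $(G\cap P_1)\cup(G\cap P_2)=G$, and $(G\cap P_2)\cup(R\cap P_2)=P_2$ are all $k$-separating. Since $4\geq 4$, Lemma \ref{flowerunion} shows that $\Phi'$ is a $k$-flower in $\mathcal{T}$. Concatenating its first two petals into $P_1$ and its last two into $P_2$ recovers $\Phi$, so $\Phi'$ refines $\Phi$; moreover the cyclically consecutive union of the two outer petals is $R$ and that of the two inner petals is $G$, so $\Phi'$ displays $(R,G)$. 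As $(R,G)$ is a $(k,\mathcal{S})$-separation that is $\mathcal{T}$-equivalent to the original one, this yields the required flower.

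The crux of the argument, and the only step needing real care, is the exclusion of weak crossing: this is precisely where the non-sequentiality of $(P_1,P_2)$ (guaranteed by $\mathcal{S}$-tightness) is deployed, via Lemma \ref{cross2}(iii). Once both petals are known to be strongly crossed, the four-petal partition is forced and checking that it is a refining flower displaying $(R,G)$ is a routine application of Lemma \ref{flowerunion}.
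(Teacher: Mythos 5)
Your proposal is correct and follows essentially the same route as the paper: reduce to a $\Phi$-minimum representative, use $\mathcal{S}$-tightness to see $(P_1,P_2)$ is a $(k,\mathcal{S})$-separation and hence non-sequential, rule out weak crossing via Lemma \ref{cross2}(iii) (with Lemma \ref{cross4}), and verify that the four-set partition $(P_1\cap R, P_1\cap G, P_2\cap G, P_2\cap R)$ is a refining $k$-flower displaying $(R,G)$ via Lemma \ref{flowerunion}. The only differences are cosmetic: you make the appeal to Lemma \ref{cross4} explicit where the paper leaves it implicit, and you check only the three consecutive unions that Lemma \ref{flowerunion} requires rather than all four.
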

 
\begin{proof}
Suppose that $(R,G)$ is a $(k,\mathcal{S})$-separation that does not conform with $\Phi$. We may assume, by possibly replacing $(R,G)$ by a $\mathcal{T}$-equivalent $(k,\mathcal{S})$-separation, that $(R,G)$ is $\Phi$-minimum. Clearly both $P_1$ and $P_2$ are crossed by $(R,G)$ because it does not conform with $\Phi$. We claim that $\Phi '=(P_1\cap G,P_1\cap R,P_2\cap R, P_2\cap G)$ is a $k$-flower in $\mathcal{T}$. Since $\Phi$ is $\mathcal{S}$-tight, it follows that $(P_1,P_2)$ is a $(k,\mathcal{S})$-separation, so by Lemma \ref{cross2} (iii) and the fact that $(P_1,P_2)$ is non-sequential both $P_1$ and $P_2$ are strongly crossed by $(R,G)$. Thus $\Phi '$ is a $\mathcal{T}$-strong partition. Furthermore, the union of any two consecutive petals of $\Phi'$ is a member of $\{R, G, P_1, P_2\}$, and so $k$-separating. Thus $\Phi'$ a $k$-flower in $\mathcal{T}$ by Lemma \ref{flowerunion}, and $\Phi \preccurlyeq_{\mathcal{S}}\Phi'$.
\end{proof}

Let $n\geq 2$, and let $\Phi=(P_1,\ldots, P_n)$ be a $k$-flower in $\mathcal{T}$. Suppose that $(R,G)$ is a $\Phi$-minimum $(k,\mathcal{S})$-separation that does not conform with $\Phi$. Let $I$ be a proper non-empty subset of $[n]$ such that $P_I$ is $k$-separating. We say that $P_I$ is \textit{$(R,G)$-strong} if either $P_I$ is not crossed by $(R,G)$ or $P_I$ is strongly crossed by $(R,G)$, and that $P_I$ is \textit{$(R,G)$-weak} if $P_I$ is weakly crossed by $(R,G)$. By Lemma \ref{cross4}, $P_I$ is either $(R,G)$-weak or $(R,G)$-strong. Evidently, if a petal $P_i$ of $\Phi$ is $(R,G)$-strong, then $P_i\cap R$ or $P_i\cap G$ is $\mathcal{T}$-strong. 

The next lemma shows that $(R,G)$-weak petals of $\Phi$ are the only obstacles to finding a $k$-flower that refines $\Phi$ and displays a $(k,\mathcal{S})$-separation that is $\mathcal{T}$-equivalent to $(R,G)$.

\begin{lemma}
 \label{ref2}
 Let $n\geq 3$, and let $\Phi=(P_1,P_2,\ldots,P_n)$ be a $k$-flower in $\mathcal{T}$. Let $(R,G)$ be a $\Phi$-minimum $(k,\mathcal{S})$-separation that does not conform with $\Phi$. If every petal of $\Phi$ is $(R,G)$-strong, then there is a $k$-flower that refines $\Phi$ and displays $(R,G)$.
\end{lemma}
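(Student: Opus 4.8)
The plan is to build the required refinement by cutting every strongly crossed petal along the separation $(R,G)$. Concretely, I would form the partition $\Phi^\ast$ obtained from $\Phi$ by replacing each petal $P_i$ that is crossed by $(R,G)$ with the two sets $R\cap P_i$ and $G\cap P_i$, while leaving each uncrossed petal intact; the two parts of each split petal are kept consecutive, so that $\Phi$ is a concatenation of $\Phi^\ast$ and hence $\Phi^\ast$ refines $\Phi$. Since every part of $\Phi^\ast$ lies wholly in $R$ or wholly in $G$, the set $R$ is a union of parts, and so $\Phi^\ast$ displays $(R,G)$ as soon as we know $\Phi^\ast$ is a $k$-flower. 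Thus the entire content of the lemma is the verification that $\Phi^\ast$ is a $k$-flower in $\mathcal{T}$.

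I would carry this out through Lemma \ref{flowerunion}. First, because $(R,G)$ does not conform with $\Phi$ it crosses some petal, and by hypothesis that petal is $(R,G)$-strong, hence strongly crossed; so at least one petal is split and $\Phi^\ast$ has at least $n+1\geq 4$ parts. Next, $\Phi^\ast$ is a $\mathcal{T}$-strong partition: an unsplit part is a whole petal, which is $\mathcal{T}$-strong since $\Phi$ is a flower, and a split part $R\cap P_i$ or $G\cap P_i$ is $\mathcal{T}$-strong by the definition of strongly crossed. In particular each part, being a $\mathcal{T}$-strong subset of a proper union of petals, satisfies $\lambda(\,\cdot\,)\geq k$ by Lemma \ref{strpetal}(i), which is exactly the ingredient needed to uncross freely. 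It then remains to show that the union of any two consecutive parts of $\Phi^\ast$ is $k$-separating. A union of the two parts of a single split petal is just $P_i$, which is $k$-separating. For two parts coming from consecutive petals $P_i$ and $P_{i+1}$, I would choose the orientation of the split petals so that the two facing parts lie on a common side of $(R,G)$; such a union is then $(P_i\cup P_{i+1})\cap R$ or $(P_i\cup P_{i+1})\cap G$, and this is $k$-separating by uncrossing $P_i\cup P_{i+1}$ with $R$ (respectively $G$) via Lemma \ref{uncrossing2}(ii), the required complement being $\mathcal{T}$-strong because some other petal contributes a $\mathcal{T}$-strong part on the opposite side. Lemma \ref{flowerunion} then finishes the argument.

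The hard part is the orientation bookkeeping in the previous step, together with the degenerate configurations it exposes. Choosing a global cyclic orientation so that every junction between distinct petals is ``same-sided'' is a constraint-satisfaction problem around the cycle---each split petal must present opposite sides to its two neighbours, while an unsplit type-$R$ (type-$G$) petal forces both of its junctions onto the $R$ (respectively $G$) side---and it need not have a solution, for instance when a split petal is flanked by two petals lying on the same side of $(R,G)$, or when the complement called for in the uncrossing step fails to contain a $\mathcal{T}$-strong part (one side of $(R,G)$ being confined to $P_i\cup P_{i+1}$). I expect these to be exactly the points where the full strength of the hypotheses must be used: that $(R,G)$ is $\Phi$-minimum and does not conform with $\Phi$. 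Minimality should prevent a whole side of $(R,G)$ from collapsing into one or two petals (which would make $(R,G)$ conform, by having a side inside a single petal or being $\mathcal{T}$-equivalent to a displayed separation), and it is via this route---rather than a naive uniform orientation---that I would rule out the bad junctions and complete the verification.
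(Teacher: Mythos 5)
Your reduction to Lemma \ref{flowerunion} is the right instinct, but the all-at-once splitting cannot be made to work, and you have correctly identified---without resolving---the fatal point. The cyclic orientation problem genuinely has no solution in configurations that the hypotheses do not exclude: consider an anemone $(P_1,\ldots,P_5)$ in which $P_1$ and $P_3$ are strongly crossed while $P_2,P_4,P_5\subseteq R$. Each of $P_2$, $P_4$, $P_5$ forces both of its junctions onto the $R$ side, while each split petal must show $R$ to one neighbour and $G$ to the other; going around the cycle this is inconsistent, so some junction of $\Phi^\ast$ has the form $(G\cap P_i)\cup P_{i+1}$ with $P_{i+1}\subseteq R$, and your uncrossing argument says nothing about it. Your proposed escape---that $\Phi$-minimality and non-conformity forbid such configurations---is not correct: non-conformity only forbids a side of $(R,G)$ from being contained in a \emph{single} petal (or being equivalent to a displayed separation), and here $G=(G\cap P_1)\cup(G\cap P_3)$ is spread over two crossed petals, which neither makes $(R,G)$ conform nor contradicts minimality. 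So the gap is real and the argument as proposed fails.

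The paper sidesteps the global constraint entirely by splitting \emph{one} crossed petal per step and inducting on the number of crossed petals. The point is that Lemma \ref{flowerunion} is asymmetric: it only requires the unions $P_i\cup P_{i+1}$ for $i\in[n-1]$, i.e.\ along a path, with the wrap-around union coming for free. Hence after splitting a single petal $P_1$, giving $(P_1\cap G, P_1\cap R, P_2,\ldots,P_n)$, there is exactly one new junction to verify, namely $(P_1\cap R)\cup P_2$, and its orientation is a single binary choice. The paper fixes that choice by first proving that, up to switching $R$ and $G$, both $P_2\cap R$ and $(P_3\cup\cdots\cup P_n)\cap G$ are $\mathcal{T}$-strong (non-conformity is used exactly here, to ensure $G$ is not trapped inside $P_1$ when $P_2\subseteq R$); then $\lambda(P_1\cup P_2\cup R)=\lambda((P_3\cup\cdots\cup P_n)\cap G)\geq k$ allows uncrossing $P_1\cup P_2$ with $R$, and a second uncrossing against $P_2$ (valid since $\lambda(P_2\cap R)\geq k$) shows $(P_1\cap R)\cup P_2$ is $k$-separating. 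If you want to repair your write-up, replace the simultaneous split by this one-petal-at-a-time induction; your remaining observations (refinement, $\mathcal{T}$-strength of the parts, and display of $(R,G)$ once no petal is crossed) then go through.
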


\begin{proof}
Suppose that every petal of $\Phi$ is $(R,G)$-strong. Then, up to labels, we may assume that $(R,G)$ crosses $P_1$. Let $P_3'=P_3\cup \cdots \cup P_n$.

\begin{claim}
 Up to switching $R$ and $G$, both $P_2\cap R$ and $P_3'\cap G$ are $\mathcal{T}$-strong.
\end{claim}
 
\begin{subproof}
  If $(R,G)$ crosses $P_2$, then both $P_2\cap R$ and $P_2\cap G$ are $\mathcal{T}$-strong. Up to switching $R$ and $G$, we may assume that $P_3'\cap G$ is $\mathcal{T}$-strong, so both $P_2\cap R$ and $P_3'\cap G$ are $\mathcal{T}$-strong. Thus we may assume that $(R,G)$ does not cross $P_2$. Then, up to switching $R$ and $G$, we can assume that $P_2\subseteq R$, so $P_2\cap R$ is $\mathcal{T}$-strong. Now if $G$ avoids $P_3'$, then $G\subseteq P_1$; a contradiction because $(R,G)$ does not conform with $\Phi$. Thus $G$ meets $P_3'$, so $P_i\cap G$ is $\mathcal{T}$-strong for some $i\in [3,n]$. Hence $P_3'\cap G$ is $\mathcal{T}$-strong. 
\end{subproof}

Assume that labels are chosen such that $P_2\cap R$ and $P_3'\cap G$ are $\mathcal{T}$-strong.

\begin{claim}
\label{ref2.2}
 $\Phi'=(P_1\cap G, P_1\cap R, P_2,\ldots, P_n)$ is a $k$-flower in $\mathcal{T}$.
\end{claim}

\begin{subproof}
  The members of the partition $\Phi '$ are $\mathcal{T}$-strong. Furthermore, $(P_1\cap G)\cup (P_1\cap R)$ is $k$-separating, and $P_i\cup P_{i+1}$ is $k$-separating for all $i\in [2,n-1]$. Thus, by Lemma \ref{flowerunion}, it suffices to show that $(P_1\cap R)\cup P_2$ is $k$-separating. Now $\lambda(P_1\cup P_2\cup R)=\lambda(P_3'\cap G)\geq k$ by Lemma \ref{strpetal} (i), so, by uncrossing $P_1\cup P_2$ and $R$, we see that $(P_1\cup P_2)\cap R$ is $k$-separating. Furthermore $\lambda(P_2\cap R)\geq k$ by Lemma \ref{strpetal} (i), so $(P_1\cap R)\cup P_2$ is $k$-separating by uncrossing $P_2$ and $(P_1\cup P_2)\cap R$. 
\end{subproof}

It now follows from \ref{ref2.2} and an induction on the number of petals of $\Phi$ crossed by $(R,G)$ that there is a $k$-flower $\Phi '$ that refines $\Phi$ and displays $(R,G)$.
\end{proof}

For $\mathcal{S}$-tight $k$-flowers we only need two $(R,G)$-strong petals to guarantee that every petal is $(R,G)$-strong. To show this we first need the following lemma.

\begin{lemma}
 \label{phiparti}
Let $n\geq 3$, and let $\Phi=(P_1,\ldots, P_n)$ be a $k$-flower in $\mathcal{T}$. Let $(R,G)$ be a $\Phi$-minimum $(k,\mathcal{S})$-separation that does not conform with $\Phi$. If $P_1$ is $(R,G)$-weak and there is a concatenation $(P_1,A,B)$ of $\Phi$ such that both $A$ and $B$ are $(R,G)$-strong, then $\Phi$ is equivalent to the $k$-flower $\Phi' =(P_1\cup P_2, P_3,\ldots, P_n)$. 
\end{lemma}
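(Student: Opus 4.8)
The plan is to reduce the statement to Lemma \ref{flowereq}: I will show that $P_1 \subseteq \fcl_{\mathcal{T}}(P_2)$, from which the asserted equivalence of $\Phi$ and $\Phi' = (P_1 \cup P_2, P_3, \ldots, P_n)$ follows immediately. Write $r_1 = P_1 \cap R$ and $g_1 = P_1 \cap G$. Since $(R,G)$ is $\Phi$-minimum and $P_1$ is $(R,G)$-weak, Lemma \ref{cross2}(ii) tells me that $r_1$ and $g_1$ are non-empty members of $\mathcal{T}$. As $P_1 \cup P_2$ is $k$-separating, it then suffices to prove that one of $P_2 \cup r_1$ or $P_2 \cup g_1$ is $k$-separating: in the first case $(r_1, g_1)$ is a partial $k$-sequence for $P_2$, in the second $(g_1, r_1)$ is, and either way Lemma \ref{fclcontain} yields $P_1 = r_1 \cup g_1 \subseteq \fcl_{\mathcal{T}}(P_2)$.

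Because $(P_1, A, B)$ is a concatenation of $\Phi$, I may write $A = P_2 \cup \cdots \cup P_j$ and $B = P_{j+1} \cup \cdots \cup P_n$ for some $j \in [2, n-1]$; in particular $P_2 \subseteq A$, and each of $A$, $B$, $P_1 \cup A$, and $P_1 \cup P_2$ is $k$-separating. The heart of the argument is the following chain of three uncrossings, which I expect to be valid whenever $A \cap R$ and $B \cap G$ are both $\mathcal{T}$-strong. First, since $A \cap R \subseteq A$ is $\mathcal{T}$-strong, Lemma \ref{strpetal}(i) gives $\lambda(A \cap R) \geq k$, so $A \cup R$ is $k$-separating by uncrossing $A$ and $R$. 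Next, $A \cup r_1 = (P_1 \cup A) \cap (A \cup R)$, and the complement of the union $(P_1 \cup A) \cup (A \cup R)$ is exactly $B \cap G$, which satisfies $\lambda(B \cap G) \geq k$ by Lemma \ref{strpetal}(i); hence $A \cup r_1$ is $k$-separating by uncrossing. Finally, $P_2 \cup r_1 = (A \cup r_1) \cap (P_1 \cup P_2)$, and the complement of $(A \cup r_1) \cup (P_1 \cup P_2) = P_1 \cup A$ is $B$, with $\lambda(B) \geq k$ since $B$ is a $\mathcal{T}$-strong union of petals; so $P_2 \cup r_1$ is $k$-separating. Interchanging the roles of $R$ and $G$ throughout gives the symmetric conclusion that $P_2 \cup g_1$ is $k$-separating whenever $A \cap G$ and $B \cap R$ are both $\mathcal{T}$-strong.

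It remains to verify that the hypotheses of one of these two chains can always be arranged, and this is where the assumptions that $A$ and $B$ are $(R,G)$-strong and that $(R,G)$ does not conform with $\Phi$ enter. Each of $A$ and $B$, being $(R,G)$-strong, is either strongly crossed—so both its $R$-part and $G$-part are $\mathcal{T}$-strong—or lies entirely inside $R$ or entirely inside $G$. If $A$ is strongly crossed, then both $A \cap R$ and $A \cap G$ are $\mathcal{T}$-strong, and I can pair off against whichever of $B \cap R$, $B \cap G$ is $\mathcal{T}$-strong. If instead $A \subseteq R$, then only $A \cap R$ is $\mathcal{T}$-strong, so I need $B \cap G$ to be $\mathcal{T}$-strong; should this fail, then $B$ (being $(R,G)$-strong) must satisfy $B \subseteq R$, whence $R \supseteq A \cup B = E - P_1$ and $G \subseteq P_1$, so $(R,G)$ has a side contained in a petal and therefore conforms with $\Phi$, a contradiction. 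The case $A \subseteq G$ is symmetric and forces $R \subseteq P_1$. Thus in every case one of the two chains applies, giving $P_1 \subseteq \fcl_{\mathcal{T}}(P_2)$ and completing the proof. I expect the case analysis of this final paragraph—together with the observation that the degenerate configurations are exactly those excluded by non-conformance—to be the main obstacle; the uncrossing chain itself, though it requires care with the set identities for the intersections and the complements of the unions, is routine.
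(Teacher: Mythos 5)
Your proposal is correct and follows essentially the same route as the paper: a case analysis using non-conformance to arrange (up to swapping $R$ and $G$) that $A\cap R$ and $B\cap G$ are $\mathcal{T}$-strong, a chain of uncrossings culminating in $P_2\cup(P_1\cap R)$ being $k$-separating, and then the observation that $(P_1\cap R, P_1\cap G)$ is a partial $k$-sequence for $P_2$, so that $P_1\subseteq\fcl_{\mathcal{T}}(P_2)$ and Lemma~\ref{flowereq} applies. The only cosmetic difference is the order of the first two uncrossings (you uncross $A$ with $R$ and then $P_1\cup A$ with $A\cup R$, whereas the paper uncrosses $P_1\cup A$ with $R$ and then the result with $A$), but both reach the same intermediate set $A\cup(P_1\cap R)$.
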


\begin{proof}
Suppose that $P_1$ is $(R,G)$-weak, and that $(P_1,A,B)$ is a concatenation of $\Phi$ such that both $A$ and $B$ are $(R,G)$-strong. 

\begin{claim}
 Up to switching $R$ and $G$, both $A\cap R$ and $B\cap G$ are $\mathcal{T}$-strong. 
\end{claim}
 
\begin{subproof}
  Assume first that $(R,G)$ crosses $A$, so both $A\cap R$ and $A\cap G$ are $\mathcal{T}$-strong. Then, up to switching $R$ and $G$, we may assume that $B\cap G$ is $\mathcal{T}$-strong. Thus both $A\cap R$ and $B\cap G$ are $\mathcal{T}$-strong. Now assume that $(R,G)$ does not cross $A$. Then, up to switching $R$ and $G$, we may assume that $A\subseteq R$, so $A\cap R$ is $\mathcal{T}$-strong. If $B\subseteq R$, then $G\subseteq P_1$; a contradiction because $(R,G)$ does not conform with $\Phi$. Thus either $(R,G)$ crosses $B$ or $B\subseteq G$, so $B\cap G$ is $\mathcal{T}$-strong.
\end{subproof}

Assume that $R$ and $G$ are labelled such that both $A\cap R$ and $B\cap G$ are $\mathcal{T}$-strong. Then $\lambda(A\cap R)\geq k$ and $\lambda(B\cap G)\geq k$ by Lemma \ref{strpetal} (i). Since $P_1\cup A\cup R$ avoids $B\cap G$, the set $(P_1\cup A)\cap R$ is $k$-separating by uncrossing $P_1\cup A$ and $R$. Another uncrossing argument with $(P_1\cup A)\cap R$ and $A$, whose intersection is $A\cap R$, shows that their union $A\cup (P_1\cap R)$ is $k$-separating. Finally, $P_2\cup (P_1\cap R)$ is the intersection of the $k$-separating sets $P_1\cup P_2$ and $A\cup (P_1\cap R)$, whose union is $A\cup P_1$, so $P_2\cup (P_1\cap R)$ is $k$-separating by uncrossing $P_1\cup P_2$ and $A\cup (P_1\cap R)$. Then $(P_1\cap R, P_1\cap G)$ is a partial $k$-sequence for $P_2$, so $P_1\subseteq \fcl_{\mathcal{T}}(P_2)$ by Lemma \ref{fclcontain}. Thus $\Phi$ is equivalent to the $k$-flower $(P_1\cup P_2,P_3,\ldots,P_n)$ by Lemma \ref{flowereq}.
\end{proof}

\begin{lemma}
 \label{tight1}
Let $n\geq 3$, and let $\Phi=(P_1,\ldots,P_n)$ be an $\mathcal{S}$-tight $k$-flower in $\mathcal{T}$. Let $(R,G)$ be a $\Phi$-minimal $(k,\mathcal{S})$-separation that does not conform with $\Phi$. If $\Phi$ has two $(R,G)$-strong petals, then every petal of $\Phi$ is $(R,G)$-strong. 
\end{lemma}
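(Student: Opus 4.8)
The plan is to argue by contradiction, reducing everything to Lemma \ref{phiparti} and exploiting the $\mathcal{S}$-tightness of $\Phi$. Suppose that $\Phi$ has an $(R,G)$-weak petal; up to labels I would assume that $P_1$ is $(R,G)$-weak. Since $\Phi$ has at least two $(R,G)$-strong petals and $P_1$ is weak, both of these strong petals lie among $P_2,\ldots,P_n$. The aim is to produce a concatenation $(P_1,A,B)$ of $\Phi$ in which both $A$ and $B$ are $(R,G)$-strong. Lemma \ref{phiparti} would then show that $\Phi$ is equivalent to the $k$-flower $(P_1\cup P_2,P_3,\ldots,P_n)$, which has only $n-1$ petals, contradicting the $\mathcal{S}$-tightness of $\Phi$.

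The key observation I would establish first is that \emph{any $k$-separating union of petals that contains an $(R,G)$-strong petal is itself $(R,G)$-strong}. Indeed, if $P_i$ is an $(R,G)$-strong petal contained in a $k$-separating union $P_J$, then one of $P_i\cap R$, $P_i\cap G$ is $\mathcal{T}$-strong (by the remark preceding Lemma \ref{ref2}), say $P_i\cap R$. Since supersets of $\mathcal{T}$-strong sets are $\mathcal{T}$-strong, $P_J\cap R\supseteq P_i\cap R$ is $\mathcal{T}$-strong, and so $P_J$ is not weakly crossed by $(R,G)$. If $P_J$ is not crossed at all, it is $(R,G)$-strong by definition; if it is crossed, then Lemma \ref{cross4} forces it to be strongly crossed, hence again $(R,G)$-strong.

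It then remains to build the required concatenation. I would take the two $(R,G)$-strong petals to be $P_a$ and $P_b$ with $2\leq a<b\leq n$, and set $A=P_2\cup\cdots\cup P_a$ and $B=P_{a+1}\cup\cdots\cup P_n$. Both $A$ and $B$ are unions of consecutive petals, hence $k$-separating, and each is a proper non-empty union; moreover $P_a\in A$ and $P_b\in B$ (as $b\geq a+1$), so by the observation above both $A$ and $B$ are $(R,G)$-strong. Thus $(P_1,A,B)$ is a concatenation of $\Phi$ satisfying the hypotheses of Lemma \ref{phiparti}, which delivers the contradiction with $\mathcal{S}$-tightness and completes the argument.

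I expect the main content to be the observation in the second paragraph that a $k$-separating union of petals inheriting even a single $(R,G)$-strong petal is $(R,G)$-strong; once this is in hand the combinatorial choice of the split point and the appeal to Lemma \ref{phiparti} are routine. The only point that needs mild care is the relabeling used to treat an arbitrary weak petal as $P_1$, which must respect the dihedral symmetry of $\Phi$ in the daisy case (and is unrestricted for anemones), so that $P_2,\ldots,P_n$ really do occur in cyclic order and the concatenation $(P_1,A,B)$ is legitimate.
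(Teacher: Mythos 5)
Your proof is correct and takes essentially the same route as the paper: both arguments reduce to Lemma \ref{phiparti} by exhibiting a concatenation $(P_1,A,B)$ with $A$ and $B$ both $(R,G)$-strong, using the observation (implicit in the paper, spelled out by you) that a $k$-separating union of petals containing an $(R,G)$-strong petal is itself $(R,G)$-strong by Lemma \ref{cross4}, and then contradicting $\mathcal{S}$-tightness. The only difference is cosmetic: the paper rotates labels so that the weak petal $P_1$ is adjacent to a strong petal and takes $A=P_2$, $B=E-(P_1\cup P_2)$, whereas you split between the two strong petals $P_a$ and $P_b$.
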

 
\begin{proof}
Assume that $\Phi$ has two $(R,G)$-strong petals. Seeking a contradiction, suppose that $\Phi$ has an $(R,G)$-weak petal. Then, up to labels, we may assume that $P_1$ is $(R,G)$-weak, and that $P_2$ is $(R,G)$-strong. Then $P_j$ is $(R,G)$-strong for some $j\notin [1,2]$, so $E-(P_1\cup P_2)$ is $(R,G)$-strong. Now $(P_1,P_2,E-(P_1\cup P_2))$ is a concatenation of $\Phi$ such that $P_1$ is $(R,G)$-weak, and both $P_2$ and $E-(P_1\cup P_2)$ are $(R,G)$-strong. Thus $\Phi$ is equivalent to the $k$-flower $\Phi '=(P_1\cup P_2,P_3,\ldots, P_n)$ by Lemma \ref{phiparti}; a contradiction because $\Phi$ is $\mathcal{S}$-tight. Thus every petal of $\Phi$ is $(R,G)$-strong.
\end{proof}

As a consequence of Lemma \ref{tight1} and Lemma \ref{cross2}(iii) we can refine $\mathcal{S}$-tight $k$-flowers of $\mathcal{S}$-order 3.

\begin{lemma}
 \label{ref3pet}
Let $\Phi=(P_1,P_2,P_3)$ be an $\mathcal{S}$-tight $k$-flower in $\mathcal{T}$. If $(R,G)$ is a $(k,\mathcal{S})$-separation that does not conform with $\Phi$, then there is a $k$-flower that refines $\Phi$ and displays a $(k, \mathcal{S})$-separation that is $\mathcal{T}$-equivalent to $(R,G)$.
\end{lemma}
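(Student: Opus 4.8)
The plan is to reduce to the refinement lemma \ref{ref2}, which already handles flowers all of whose petals are $(R,G)$-strong, and to use Lemma \ref{tight1} to upgrade ``two strong petals'' to ``all petals strong''. First I would replace $(R,G)$ by a $\mathcal{T}$-equivalent $(k,\mathcal{S})$-separation that is $\Phi$-minimum: choose, within the $\mathcal{T}$-equivalence class of $(R,G)$, a $k$-separation crossing the fewest petals of $\Phi$. Every member of this class is $\mathcal{T}$-strong, so by (S1) the chosen representative is again a $(k,\mathcal{S})$-separation, and it still fails to conform with $\Phi$ because conformity is invariant under $\mathcal{T}$-equivalence. Hence I may assume from the outset that $(R,G)$ is $\Phi$-minimum, which is exactly the hypothesis required by Lemmas \ref{tight1} and \ref{ref2}.

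The crux is to exhibit two $(R,G)$-strong petals. Since $\Phi$ is $\mathcal{S}$-tight with three petals, its $\mathcal{S}$-order equals $3$, so $\Phi$ displays at least two inequivalent $(k,\mathcal{S})$-separations. With only three petals, every $k$-separating proper non-empty union of petals has the form $P_i$ or $P_i\cup P_j=E-P_l$, so each displayed separation is of the form $(P_l, E-P_l)$. Thus two distinct petals, say $P_i$ and $P_j$, satisfy $P_i,P_j\in\mathcal{S}$ and are therefore non-sequential. By the contrapositive of Lemma \ref{cross2}(iii), a non-sequential petal cannot be weakly crossed by $(R,G)$; since by Lemma \ref{cross4} a crossed petal is either strongly or weakly crossed, such a petal is either uncrossed or strongly crossed, and so is $(R,G)$-strong by definition. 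Hence $P_i$ and $P_j$ are two $(R,G)$-strong petals.

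Now Lemma \ref{tight1} applies, since $n=3\geq 3$ and $\Phi$ is $\mathcal{S}$-tight, and yields that every petal of $\Phi$ is $(R,G)$-strong. Lemma \ref{ref2} then produces a $k$-flower that refines $\Phi$ and displays $(R,G)$. As this $(R,G)$ is $\mathcal{T}$-equivalent to the original separation, the resulting flower refines $\Phi$ and displays a $(k,\mathcal{S})$-separation that is $\mathcal{T}$-equivalent to the original $(R,G)$, which is what the statement requires.

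The main obstacle I anticipate is the middle paragraph: correctly translating $\mathcal{S}$-tightness into the existence of two non-sequential petals, using the interpretation of $\mathcal{S}$-order together with the observation that all separations displayed by a three-petal flower have the shape $(P_l,E-P_l)$, and then cleanly converting ``non-sequential'' into ``$(R,G)$-strong'' via the contrapositive of Lemma \ref{cross2}(iii) and the dichotomy of Lemma \ref{cross4}. Once those two strong petals are in hand, the remainder is a direct invocation of Lemmas \ref{tight1} and \ref{ref2}.
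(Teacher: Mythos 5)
Your proposal is correct and follows essentially the same route as the paper: reduce to a $\Phi$-minimum representative, use $\mathcal{S}$-order three to obtain two displayed inequivalent $(k,\mathcal{S})$-separations whose petals are $(R,G)$-strong via Lemma \ref{cross2}(iii) (and the dichotomy of Lemma \ref{cross4}), then apply Lemma \ref{tight1} followed by Lemma \ref{ref2}. Your write-up is in fact slightly more careful than the paper's on two points the paper leaves implicit: the justification via (S1) that the $\Phi$-minimum representative is again a $(k,\mathcal{S})$-separation, and the distinction between ``strongly crossed'' and ``uncrossed or strongly crossed'' when converting non-sequentiality into $(R,G)$-strength.
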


\begin{proof}
Assume that $(R,G)$ is a $\Phi$-minimum $(k,\mathcal{S})$-separation that does not conform with $\Phi$. As $\Phi$ is an $\mathcal{S}$-tight $k$-flower in $\mathcal{T}$ of $\mathcal{S}$-order three, it displays at least two inequivalent $(k,\mathcal{S})$-separations. By Lemma \ref{cross2} (iii) $(k,\mathcal{S})$-separations displayed by $\Phi$ are strongly crossed by $(R,G)$, so we may assume that $P_1$ and $P_2$ are $(R,G)$-strong. Then all petals of $\Phi$ are $(R,G)$-strong by Lemma \ref{tight1}. It follows from Lemma \ref{ref2} that there is a $k$-flower that refines $\Phi$ and displays a $(k,\mathcal{S})$-separation that is $\mathcal{T}$-equivalent to $(R,G)$.  
\end{proof}

We can do one better than Lemma \ref{tight1}.

\begin{lemma}
 \label{1strongpetal}
Let $n\geq 3$, and let $\Phi=(P_1,\ldots, P_n)$ be an $\mathcal{S}$-tight $k$-flower in $\mathcal{T}$. Let $(R,G)$ be a $\Phi$-minimum $(k,\mathcal{S})$-separation that does not conform with $\Phi$. If $\Phi$ has one $(R,G)$-strong petal, then every petal of $\Phi$ is $(R,G)$-strong. 
\end{lemma}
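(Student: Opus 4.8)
The plan is to deduce this from Lemma \ref{tight1} by showing that a single $(R,G)$-strong petal already forces a second one. So I would assume $\Phi$ has at least one $(R,G)$-strong petal and, arguing by contradiction against Lemma \ref{tight1}, suppose it has exactly one, say $P_s$, every other petal being $(R,G)$-weak. The goal is then to produce a configuration contradicting $\mathcal{S}$-tightness.

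First I would isolate the two ``strongness'' facts that power the earlier lemmas in this section. The first is that any $k$-separating proper non-empty union of petals $P_K$ with $s\in K$ is $(R,G)$-strong: if $(R,G)$ does not cross $P_K$ this holds by definition, and otherwise one of $P_s\cap R$, $P_s\cap G$ is $\mathcal{T}$-strong (as noted after the definition of $(R,G)$-strong petals), so the corresponding part of $P_K$ contains a $\mathcal{T}$-strong set and is $\mathcal{T}$-strong; thus $P_K$ is not weakly crossed and is strongly crossed by Lemma \ref{cross4}. The second fact is that each side of a displayed $(k,\mathcal{S})$-separation is $(R,G)$-strong, since such a side is non-sequential and hence, by Lemma \ref{cross2}(iii) together with Lemma \ref{cross4}, cannot be weakly crossed.

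Next I would use tightness to manufacture a strong union of petals avoiding $P_s$. Because $\Phi$ is $\mathcal{S}$-tight of $\mathcal{S}$-order $n\geq 3$, it displays at least two inequivalent $(k,\mathcal{S})$-separations, and at most one of these is $(P_s,E-P_s)$; so I may fix a displayed $(k,\mathcal{S})$-separation $(P_J,E-P_J)$ different from $(P_s,E-P_s)$. The side of this separation containing $P_s$ then has at least two petals, so up to labels I may write it as $P_J=P_1\cup\cdots\cup P_j$ with $j\geq 2$ and $s\in[1,j]$. Since $P_s$ is the unique $(R,G)$-strong petal and the two endpoints $P_1,P_j$ of the arc $J$ are distinct, at least one endpoint, call it $P_t$, is $(R,G)$-weak. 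Now consider the concatenation of $\Phi$ with the three blocks $P_t$, the arc $P_{J\setminus\{t\}}$, and the arc $E-P_J$. Both arcs are consecutive, hence $k$-separating; the first is $(R,G)$-strong because it contains $P_s$, and the second is $(R,G)$-strong as a side of a displayed $(k,\mathcal{S})$-separation. This is precisely the hypothesis of Lemma \ref{phiparti} (a $(R,G)$-weak petal flanked by two $(R,G)$-strong blocks), so $\Phi$ is equivalent to a $k$-flower with fewer petals, contradicting $\mathcal{S}$-tightness. Hence $\Phi$ has at least two $(R,G)$-strong petals, and Lemma \ref{tight1} finishes the proof.

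The main obstacle is the step that distinguishes this lemma from Lemma \ref{tight1}: with only one strong single petal available, I cannot directly point to two strong petals flanking a weak one. The key realisation is that $\mathcal{S}$-tightness still supplies a strong \emph{union} of petals disjoint from $P_s$, namely the far side $E-P_J$ of a displayed separation other than $(P_s,E-P_s)$, and this can play the role of the second strong block. The remaining care is purely combinatorial: to seat the weak petal $P_t$ between two \emph{consecutive} strong blocks so that Lemma \ref{phiparti} genuinely applies, one must respect the daisy constraint that only consecutive unions are $k$-separating, which is exactly why $P_t$ is chosen to be an endpoint of the arc $J$.
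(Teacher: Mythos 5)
Your proof is correct and follows essentially the same route as the paper's: assume exactly one $(R,G)$-strong petal, use $\mathcal{S}$-tightness to obtain a second displayed $(k,\mathcal{S})$-separation whose sides are $(R,G)$-strong via Lemma \ref{cross2}(iii) and Lemma \ref{cross4}, sandwich a weak petal between two $(R,G)$-strong consecutive blocks, apply Lemma \ref{phiparti} to contradict $\mathcal{S}$-tightness, and finish with Lemma \ref{tight1}. The only cosmetic difference is the choice of blocks: the paper takes the weak petal $P_1$ adjacent to the strong petal inside the displayed side, giving the concatenation $(P_1,P_2,E-(P_1\cup P_2))$, whereas you take a weak endpoint $P_t$ of the arc $P_J$ containing the strong petal, giving $(P_t,P_{J\setminus\{t\}},E-P_J)$; both work for the same reason.
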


\begin{proof}
If $\Phi$ has two $(R,G)$-strong petals, then the conclusion follows from Lemma \ref{tight1}. Assume towards a contradiction that $\Phi$ has exactly one $(R,G)$-strong petal. Then, up to labels, we can assume that $P_2$ is $(R,G)$-strong. Since $\Phi$ is an $\mathcal{S}$-tight $k$-flower of $\mathcal{S}$-order at least three, it displays some $(k,\mathcal{S})$-separation $(X,Y)$ that is not $\mathcal{T}$-equivalent to $(P_2,E-P_2)$. By Lemma \ref{cross2} (iii) both $X$ and $Y$ are $(R,G)$-strong, so they must contain at least two petals of $\Phi$. Then we can assume that $(X,Y)$ and the petals of $\Phi$ are labelled such that $P_2$ is $(R,G)$-strong and $P_1,P_2\subseteq X$. Now $Y\subseteq E-(P_1\cup P_2)$, so $E-(P_1\cup P_2)$ is also $(R,G)$-strong. Thus $(P_1,P_2,E-(P_1\cup P_2))$ is a concatenation of $\Phi$ such that $P_1$ is $(R,G)$-weak, and both $P_2$ and $E-(P_1\cup P_2)$ are $(R,G)$-strong. Then it follows from Lemma \ref{phiparti} that $\Phi '=(P_1\cup P_2,P_3,\ldots, P_n)$ is a $k$-flower in $\mathcal{T}$ that is equivalent to $\Phi$; a contradiction because $\Phi$ is $\mathcal{S}$-tight. Thus $\Phi$ has two $(R,G)$-strong petals, so by Lemma \ref{tight1} every petal of $\Phi$ is $(R,G)$-strong.
\end{proof}

Unfortunately, Lemma \ref{1strongpetal} is as much as we can say for arbitrary tangles in a connectivity system. Consider the $8$-element rank-$4$ matroid $R_8$ that is represented geometrically by a cube (see, for example, \cite[pp. 646]{oxley2011matroid}). The $4$-point planes of $R_8$ are the six faces of the cube and the six diagonal planes. Let $E=[8]$ be the ground set of $R_8$, and let $r$ be the rank function of $R_8$. For each positive integer $\ell$, define a function $f_{\ell}$ on the subsets $X$ of $E$ by

\[ f_{\ell}(X)=\begin{cases}
 0 & X=\emptyset. \\
 r(X)+\ell & \text{ otherwise. }
\end{cases} \]

It is straightforward to prove that $f_{\ell}$ is a polymatroid on $E=[8]$. Let $\lambda_{\ell}$ be the connectivity function of $f_{\ell}$, that is, $\lambda_{\ell}(X)=f_{\ell}(X)+f_{\ell}(E-X)-f_{\ell}(E)+1$ for all $X\subseteq E$. Then $\mathcal{T}=\{\{i\}\ |\ i\in [8]\}\cup \{\emptyset\}$ is the unique tangle in $(E, \lambda_{\ell})$ of order $\ell+3$. Let $\mathcal{S}$ be the set of all non-sequential $(\ell+3)$-separating sets in $\lambda_{\ell}$ with $\mathcal{T}$-strong complements. With notation as in Figure \ref{R82}, the partition $\Phi= (\{1,2\},\{3,4\},\{5,6\},\{7,8\})$ is an $\mathcal{S}$-tight $\mathcal{S}$-maximal $(\ell+3)$-flower in $\mathcal{T}$. However, the non-sequential $(\ell+3)$-separation $(\{1,3,5,7\},\{2,4,6,8\})$ does not conform with $\Phi$.

\begin{figure}[hbt]
 \begin{center}
 \begin{tikzpicture}[thick,line join=round]
                \coordinate[label=180:$1$] (a) at (0,0);
                \coordinate[label=180:$5$] (b) at (0,2.2);
                \coordinate[label=0:$2$] (c) at (2,0);
                \coordinate[label=-25:$6$] (d) at (2,2.2);
                \coordinate[label=180:$4$] (e) at ($(a) + (50:1.3)$);
                \coordinate[label=180:$8$] (f) at ($(b) + (50:1.3)$);
                \coordinate[label=0:$3$] (g) at ($(c) + (50:1.3)$);
                \coordinate[label=0:$7$] (h) at ($(d) + (50:1.3)$);
                \draw (a) -- (b) -- (d) -- (c) -- (g) -- (h) -- (f) -- (e) -- (a) -- (c);
                \draw (b) -- (f);
                \draw (d) -- (h);
                \draw (e) -- (g);
                \foreach \pt in {a,b,c,d,e,f,g,h} \fill[black] (\pt) circle (3pt);
        \end{tikzpicture}

\end{center}
\caption{The matroid, $R_8$.}\label{R82} 
\end{figure} 

We can obtain analogous matroid examples by a standard construction where matroid elements are freely added to each polymatroid element.

Let $k$ be a positive integer, and let $\mathcal{T}$ be a collection of subsets of $E$ satisfying the axioms (T1), (T2), and (T4). Then $\mathcal{T}$ is a \textit{robust tangle of order $k$} in a connectivity system $(E,\lambda)$ if the following property holds:
\begin{enumerate}
 \item[(RT3)] If $A_1,A_2,\ldots,A_8\in \mathcal{T}$, then $A_1\cup A_2\cup \cdots \cup A_8\neq E$
\end{enumerate}

Note that every robust tangle of order $k$ in $(E,\lambda)$ is certainly a tangle of order $k$ in $(E,\lambda)$. 

We are now in position to achieve the main goal of this section.

\begin{thm}
 \label{TMkflowerconf}
Let $\mathcal{T}$ be a robust tangle of order $k$ in a connectivity system $(E,\lambda)$, and let $\mathcal{S}$ be a tree compatible set. If $\Phi$ is an $\mathcal{S}$-tight $\mathcal{S}$-maximal $k$-flower in $\mathcal{T}$, then every $(k,\mathcal{S})$-separation conforms with $\Phi$. 
\end{thm}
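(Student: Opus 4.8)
The plan is to argue by contradiction from the $\mathcal{S}$-maximality of $\Phi$. First I would dispose of the degenerate case: we may assume the $\mathcal{S}$-order $n$ of $\Phi$ is at least two, since if $\Phi$ displayed no $(k,\mathcal{S})$-separation at all then any $(k,\mathcal{S})$-separation $(R,G)$ that existed would make the two-petal flower $(R,G)$ satisfy $\Phi\prec_{\mathcal{S}}(R,G)$, contradicting maximality. So suppose some $(k,\mathcal{S})$-separation fails to conform with $\Phi$, and, ranging over all such separations and over all separations $\mathcal{T}$-equivalent to them, choose $(R,G)$ crossing the fewest petals; thus $(R,G)$ is $\Phi$-minimum and non-conforming. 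The engine of the contradiction is this: \emph{if every petal of $\Phi$ is $(R,G)$-strong, we are done.} Indeed, Lemma~\ref{ref2} then yields a $k$-flower $\Phi'$ refining $\Phi$ and displaying $(R,G)$; since $(R,G)$ does not conform it is $\mathcal{T}$-equivalent to no separation displayed by $\Phi$, so $\Phi'\not\preccurlyeq_{\mathcal{S}}\Phi$, while $\Phi\preccurlyeq_{\mathcal{S}}\Phi'$ because $\Phi$ is a concatenation of $\Phi'$. This gives $\Phi\prec_{\mathcal{S}}\Phi'$, contradicting maximality. By Lemma~\ref{1strongpetal} (valid for $n\ge 3$), to force every petal strong it suffices to exhibit a \emph{single} $(R,G)$-strong petal. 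The cases $n=2$ and $n=3$ need no strong petal: Lemmas~\ref{ref1} and~\ref{ref3pet} directly manufacture a refinement of $\Phi$ displaying a separation $\mathcal{T}$-equivalent to $(R,G)$, which contradicts maximality exactly as above.

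It remains to handle $n\ge 4$ by showing that not every petal can be $(R,G)$-weak, and this is the sole point at which the robustness hypothesis (RT3) is used. Suppose, for contradiction, that every petal is $(R,G)$-weak. Then by Lemma~\ref{cross2}(ii) each of the sets $P_i\cap R$ and $P_i\cap G$ lies in $\mathcal{T}$. By Corollary~\ref{flowerclass}, $\Phi$ is either an anemone or a daisy, so any union of petals (any union at all in the anemone case, any consecutive arc in the daisy case) is $k$-separating. I would coarsen $\Phi$ into exactly four non-empty parts $Q_1,Q_2,Q_3,Q_4$, taking consecutive arcs in the daisy case. Since every petal meets both $R$ and $G$, so does every $Q_t$, so by Lemma~\ref{cross4} each $Q_t$ is either strongly or weakly crossed. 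If all four are weakly crossed, then $Q_1\cap R,\,Q_1\cap G,\,\ldots,\,Q_4\cap R,\,Q_4\cap G$ are eight members of $\mathcal{T}$ whose union is $E$, directly contradicting (RT3). This is precisely the boundary realised by the matroid $R_8$ with its non-robust tangle: there $n=4$, every petal is weakly crossed, and the eight singletons $\{i\}$ do cover $[8]$, so the argument must fail exactly when (RT3) fails.

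The remaining, and hardest, possibility is that some coarse part $Q_t$ is strongly crossed even though every individual petal is weak. Here I would use that $\Phi$, being $\mathcal{S}$-tight, has no $\mathcal{T}$-loose petals (Lemma~\ref{flowereq}), together with the fact that any $(k,\mathcal{S})$-separation displayed by $\Phi$ is strongly crossed by $(R,G)$ (Lemma~\ref{cross2}(iii), since such a separation is non-sequential and a weakly crossed $k$-separating union would be sequential). As $\Phi$ has $\mathcal{S}$-order at least four it displays inequivalent $(k,\mathcal{S})$-separations, and these furnish genuinely strongly crossed arcs. From a minimal strongly crossed arc $Q$ I would peel off an end petal $P_1$ (necessarily weak): if the truncated arc $Q-P_1$ is again strongly crossed, then $(P_1,\,Q-P_1,\,E-Q)$ is a concatenation flanking the weak petal $P_1$ by two $(R,G)$-strong arcs, so Lemma~\ref{phiparti} makes $\Phi$ $\mathcal{T}$-equivalent to a coarser flower, contradicting $\mathcal{S}$-tightness; while if $Q-P_1$ is weakly crossed, the two $\mathcal{T}$-members $P_1\cap R,\,P_1\cap G$ can be assembled, via the uncrossing arguments of Lemmas~\ref{tightflowerwin} and~\ref{cross2}(iii), into a partial $k$-sequence showing $P_1\subseteq\fcl_{\mathcal{T}}$ of a neighbouring petal, i.e.\ $P_1$ is $\mathcal{T}$-loose, again contradicting tightness.

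I expect this last step — the bookkeeping that converts a strongly crossed coarse part into either a pair of strong flanking arcs or a loose petal — to be the main obstacle, since it is where one must reconcile the strong/weak dichotomy of Lemma~\ref{cross4} with the rigidity of a tight loose-free flower and with $\Phi$-minimality. It is also precisely where the strengthening of the three-set axiom (T3) to the eight-set axiom (RT3) is forced: the coarsening produces up to four two-sided parts, hence up to eight tangle members, before the covering obstruction can be invoked, so no weaker bound than eight would close the argument.
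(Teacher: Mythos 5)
Your scaffolding coincides with the paper's proof: the reduction to a $\Phi$-minimum non-conforming $(R,G)$, the disposal of the cases $n=2,3$ via Lemmas~\ref{ref1} and~\ref{ref3pet}, the observation that a single $(R,G)$-strong petal suffices by Lemmas~\ref{1strongpetal} and~\ref{ref2}, and the use of (RT3) on four-part concatenations (eight tangle members covering $E$) to show that not all four parts can be weakly crossed. The gap is exactly in the step you flag as the main obstacle: deriving a contradiction when every petal of $\Phi$ is $(R,G)$-weak but some union of petals is strongly crossed. Your device --- take a minimal strongly crossed arc $Q$ and peel off an end petal $P_1$ --- does not close this case. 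First, if $Q$ is genuinely minimal, the branch ``$Q-P_1$ strongly crossed'' is vacuous, so everything rests on the other branch. Second, in that branch the looseness conclusion $P_1\subseteq\fcl_{\mathcal{T}}(P_2)$ is not obtainable from the cited lemmas: the uncrossing chain underlying Lemma~\ref{phiparti} needs a concatenation $(P_1,A,B)$ of $\Phi$ in which, after relabelling $R$ and $G$, both $A\cap R$ and $B\cap G$ are $\mathcal{T}$-strong, i.e.\ two \emph{complementary} $(R,G)$-strong arcs flanking $P_1$. In your configuration the two arcs flanking $P_1$ are $Q-P_1$, which is weakly crossed (so both $(Q-P_1)\cap R$ and $(Q-P_1)\cap G$ lie in $\mathcal{T}$ by Lemma~\ref{cross2}(ii)), and $E-Q$. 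If $E-Q$ is also weakly crossed you do get a direct (RT3) contradiction (six tangle members cover $E$), but if $E-Q$ is strongly crossed then neither choice of $(A,B)$ from $\{Q-P_1,\,E-Q\}$ satisfies the hypotheses, and no uncrossing is available to start the partial $k$-sequence $(P_1\cap R,\,P_1\cap G)$ for a neighbouring petal. The case ``$Q$ strong, $Q-P_1$ weak, $E-Q$ strong'' is left open, and it is the hard one.

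The paper closes it with a different combinatorial device: a one-parameter family of four-part concatenations anchored at a \emph{fixed} petal. Assuming every petal weak, set $A_i=P_2\cup\cdots\cup P_{n-i-1}$ and $B_i=P_{n-i+1}\cup\cdots\cup P_n$, and consider $\Phi_i=(P_1,A_i,P_{n-i},B_i)$ for $i\in[n-3]$. Since $P_1$ and $P_{n-i}$ are weak, (RT3) forces $A_i$ or $B_i$ to be strong for every $i$; since $B_1=P_n$ and $A_{n-3}=P_2$ are petals and hence weak, $A_1$ and $B_{n-3}$ are strong. Taking the smallest $j\geq 2$ with $B_j$ strong, minimality gives $B_{j-1}$ weak, whence $A_{j-1}$ is strong by (RT3) applied to $\Phi_{j-1}$. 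The point your approach misses is that $A_{j-1}$ and $B_j$ are complementary with respect to $P_1$: $A_{j-1}\cup B_j=E-P_1$, so $(P_1,A_{j-1},B_j)$ is a concatenation with both flanking parts strong, Lemma~\ref{phiparti} yields $P_1\subseteq\fcl_{\mathcal{T}}(P_2)$, and $\mathcal{S}$-tightness is contradicted. A minimal strongly crossed arc cannot supply such a complementary pair; the transition index in this sliding family can, and that is the missing idea.
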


\begin{proof}
Let $\Phi=(P_1,P_2,\ldots, P_n)$ be an $\mathcal{S}$-tight $\mathcal{S}$-maximal $k$-flower in $\mathcal{T}$. Assume that the theorem fails, and that $(R,G)$ is a $\Phi$-minimum $(k,\mathcal{S})$-separation that does not conform with $\Phi$. Then clearly $n\geq 2$, and by Lemma \ref{ref1} and Lemma \ref{ref3pet}, we may assume that $n\geq 4$. Assume towards a contradiction that every petal of $\Phi$ is $(R,G)$-weak. For each $i\in [n-3]$, let $A_i=P_2\cup P_3\cup \cdots \cup P_{n-i-1}$ and let $B_i=P_n\cup \cdots \cup P_{n-i+1}$, and consider the concatenation $\Phi_{i}=(P_1,A_i,P_{n-i},B_i)$ of $\Phi$. The petals of $\Phi_{i}$ are either $(R,G)$-weak or $(R,G)$-strong by Lemma \ref{cross4}, and they cannot all be $(R,G)$-weak because $\mathcal{T}$ satisfies (RT3). For each $i\in [n-3]$, the petals $P_1$ and $P_{n-i}$ are $(R,G)$-weak, so $A_i$ or $B_i$ must be $(R,G)$-strong. Moreover, both $A_1$ and $B_{n-3}$ are $(R,G)$-strong because both $B_1=P_n$ and $A_{n-3}=P_2$ are $(R,G)$-weak. Thus there is a smallest index $j\geq 2$ such that $B_j$ is $(R,G)$-strong. Then $B_{j-1}$ is $(R,G)$-weak, so $A_{j-1}$ is $(R,G)$-strong by (RT3). Now $(P_1,A_{j-1},B_j)$ is a concatenation of $\Phi$ such that both $A_{j-1}$ and $B_j$ are $(R,G)$-strong, so it follows from Lemma \ref{phiparti} that $\Phi '=(P_1\cup P_2,P_3,\ldots, P_n)$ is a $k$-flower equivalent to $\Phi$; a contradiction because $\Phi$ is $\mathcal{S}$-tight. Thus $\Phi$ has an $(R,G)$-strong petal. It now follows from Lemma \ref{1strongpetal} that every petal of $\Phi$ is $(R,G)$-strong. Then, by Lemma \ref{ref2}, there is a $k$-flower that refines $\Phi$ and displays a $(k,\mathcal{S})$-separation that is $\mathcal{T}$-equivalent to $(R,G)$; a contradiction of the $\mathcal{S}$-maximality of $\Phi$.
\end{proof}

\section{Partial k-trees}
\label{ktree}

The tree used to obtain the tree decomposition of $3$-connected matroids in \cite{oxley2004structure} was a $\pi$-labelled tree called a maximal partial $3$-tree. We will use an analogous $\pi$-labelled tree to obtain the tree decomposition in Theorem \ref{bigone}. The exposition given here will therefore closely follow that of Oxley, Semple, and Whittle \cite{oxley2004structure}.

Let $\mathcal{T}$ be a tangle of order $k$ in a connectivity system $(E,\lambda)$, and let $\pi$ be a partition of $E$. Note that we allow members of $\pi$ to be empty. Let $T$ be a tree such that every member of $\pi$ labels a vertex of $T$. Some vertices may be unlabelled and no vertex is multiply labelled. We say that $T$ is a \textit{$\pi$-labelled tree} for $\mathcal{T}$. The vertices of $T$ labelled by the members of $\pi$ are called \textit{bag vertices}, and the members of $\pi$ are called \textit{bags}. A \textit{terminal bag} is a bag that labels a leaf of $T$.

Let $T$ be a $\pi$-labelled tree for $\mathcal{T}$. We now define some partitions of $E$ that are induced by certain subgraphs of $T$. Let $T'$ be a subtree of $T$. The union of those bags that label vertices of $T'$ is the subset of $E$ \textit{displayed} by $T'$. Let $e$ be an edge of $T$. The \textit{partition of $E$ displayed by $e$} is the partition displayed by the connected components of $T\del e$. Let $v$ be a vertex of $T$ that is not a bag vertex. Then the \textit{partition of $E$ displayed by $v$} is the partition displayed by the connected components of $T-v$. The edges incident with $v$ are in natural one-to-one correspondence with the connected components of $T-v$, and hence with the members of the partition of $E$ displayed by $v$. In what follows, if a cyclic ordering is imposed on the edges incident with $v$, then we cyclically order the members of the partition of $E$ displayed by $v$ in the corresponding order. 

Let $v$ be a vertex of a $\pi$-labelled tree $T$ that is not a bag vertex, and let $(e_1,\ldots,e_n)$ be a cyclic ordering of the edges incident with $v$. Then $v$ is a \textit{$k$-flower vertex} if the partition $(P_1,\ldots,P_n)$ of $E$ displayed by $v$, in the cyclic order corresponding to $(e_1,\ldots,e_n)$, is a $k$-flower in $\mathcal{T}$. The $k$-separations displayed by the $k$-flower corresponding to a $k$-flower vertex are called the $k$-separations \textit{displayed by $v$}. A $k$-separation is \textit{displayed by $T$} if it is displayed by some edge or some $k$-flower vertex of $T$. A $\mathcal{T}$-strong $k$-separation $(X,Y)$ \textit{conforms} with $T$ if either $(X,Y)$ is $\mathcal{T}$-equivalent to a $k$-separation displayed by $T$, or $(X,Y)$ is $\mathcal{T}$-equivalent to a $k$-separation $(X',Y')$ with the property that $X'$ or $Y'$ is contained in a bag of $T$. 

Let $\mathcal{T}$ be a tangle of order $k$ in a connectivity system $(E,\lambda)$, and let $\mathcal{S}$ be a tree compatible set. A \textit{partial $(k,\mathcal{S})$-tree} for $\mathcal{T}$ is a $\pi$-labelled tree for $\mathcal{T}$, where $\pi$ is a partition of $E$ such that the following properties hold:
\begin{enumerate}
 \item[(P1)] For each edge $e$ of $T$, the partition $(X,Y)$ of $E$ displayed by $e$ is a $\mathcal{T}$-strong $k$-separation, and, if $e$ is incident with two bag vertices, then $(X,Y)$ is a $(k,\mathcal{S})$-separation.
 \item[(P2)] Each non-bag vertex $v$ of $T$ is labelled either $D$ or $A$. Moreover, if $v$ is labelled by $D$, then there is a cyclic ordering on the edges incident with $v$.
 \item[(P3)] If a vertex $v$ if labelled by $A$, then the partition of $E$ displayed by $v$ is a $k$-anemone of $\mathcal{S}$-order at least three with no $\mathcal{T}$-loose petals.
 \item[(P4)] If a vertex $v$ is labelled by $D$, then the partition of $E$ displayed by $v$, in the cyclic order induced by the cyclic ordering on the edges incident with $v$, is a $k$-daisy of $\mathcal{S}$-order at least three with no $\mathcal{T}$-loose petals.
 \item[(P5)] Every $(k,\mathcal{S})$-separation conforms with $T$.
\end{enumerate} 

Note that if $(X,Y)$ is displayed by an edge $e$ of a partial $(k,\mathcal{S})$-tree, then $X\in \mathcal{S}$ or $Y\in \mathcal{S}$. This follows from (P1) if $e$ is incident with two bag vertices, and from Lemma \ref{nompetalt} if $e$ is incident with a $k$-flower vertex. %We may suppress the labels $D$ or $A$ on non-bag vertices when they are clear from the context.

We now define a relation $\preccurlyeq_{\mathcal{S}}$ on the set of partial $(k,\mathcal{S})$-trees for $\mathcal{T}$. Let $T$ and $T'$ be partial $(k,\mathcal{S})$-trees for $\mathcal{T}$. If, for each $(k,\mathcal{S})$-separation displayed by $T$, there is some $\mathcal{T}$-equivalent $(k,\mathcal{S})$-separation displayed by $T'$, then $T\preccurlyeq_{\mathcal{S}} T'$. It is straightforward to check that $\preccurlyeq_{\mathcal{S}}$ is a quasi-order on the set of partial $(k,\mathcal{S})$-trees for $\mathcal{T}$. If $T\preccurlyeq_{\mathcal{S}} T'$ and $T'\preccurlyeq_{\mathcal{S}} T$, then $T$ is \textit{$\mathcal{T}$-equivalent} to $T'$ with respect to $\mathcal{S}$. As with the other notions of equivalence we have developed, when the tangle $\mathcal{T}$ and the set $\mathcal{S}$ are clear from the context, we shall abbreviate ``$\mathcal{T}$-equivalent with respect to $\mathcal{S}$'' to ``equivalent''. A partial $(k,\mathcal{S})$-tree is $\mathcal{S}$-\textit{maximal} if it is maximal in the quasi-order $\preccurlyeq_{\mathcal{S}}$. A partial $(k,\mathcal{S})$-tree for $\mathcal{T}$ is \textit{trivial} if it does not display any $(k,\mathcal{S})$-separations. 

Let $\Phi=(P_1,\ldots, P_n)$ be a $k$-flower in $\mathcal{T}$, and let $\mathcal{S}$ be a tree compatible set. There is a $\Phi$-labelled tree for $\mathcal{T}$ that we can associate with $\Phi$. If $n=1$, then $T$ consists of a single bag-vertex labelled by the bag $P_1$. If $n=2$, then $T$ consists of two adjacent bag vertices labelled by $P_1$ and $P_2$ respectively. Assume that $n\geq 3$. Then we let $T$ be the tree with vertex set $\{v, v_1,\ldots, v_n \}$, where $v$ is adjacent to each $v_i$, and each $v_i$ is labelled by the bag $P_i$. Finally, if $\Phi$ is a $k$-daisy, then the edges incident with the non-bag vertex $v$ are given the cyclic ordering $(vv_1,\ldots,vv_n)$. If $\Phi$ is an $\mathcal{S}$-tight $k$-flower in $\mathcal{T}$, then it is easily seen that the associated $\Phi$-labelled tree for $\mathcal{T}$ satisfies the first four partial $(k,\mathcal{S})$-tree axioms. Moreover, we have the following immediate consequence of Theorem \ref{TMkflowerconf}.

\begin{cor}
 \label{TMkflower}
Let $\mathcal{T}$ be a robust tangle of order $k$ in a connectivity system $(E,\lambda)$, and let $\mathcal{S}$ be a tree compatible set. If $\Phi$ is an $\mathcal{S}$-tight $\mathcal{S}$-maximal $k$-flower in $\mathcal{T}$, then the $\Phi$-labelled tree associated with $\Phi$ is a partial $(k,\mathcal{S})$-tree for $\mathcal{T}$. 
\end{cor}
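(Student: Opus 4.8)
The plan is to check the five axioms (P1)--(P5) for the $\Phi$-labelled tree $T$, with essentially all the work going into (P5). Since $\Phi$ is $\mathcal{S}$-tight, axioms (P1)--(P4) are routine from the construction, as remarked just before the statement: the bags of $T$ are the petals $P_1,\ldots,P_n$; each edge $vv_i$ displays $(P_i,E-P_i)$, which is a $\mathcal{T}$-strong $k$-separation with $P_i$ or $E-P_i$ in $\mathcal{S}$ by Lemma \ref{nompetalt}, giving (P1); and when $n\geq 3$ the central vertex $v$ is labelled $A$ or $D$ according as $\Phi$ is a $k$-anemone or $k$-daisy (Corollary \ref{flowerclass}), its incident edges receiving the cyclic ordering matching that of $\Phi$ in the daisy case. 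Because $\Phi$ is $\mathcal{S}$-tight with $n\geq 3$ petals, its $\mathcal{S}$-order is $n\geq 3$ and, by Lemma \ref{flowereq}, it has no $\mathcal{T}$-loose petals, so (P3) and (P4) hold.

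For (P5) the key point I would isolate is that, for this particular tree, ``conforms with $T$'' is the same condition as ``conforms with $\Phi$''. This rests on two observations. First, the bags of $T$ are exactly the petals of $\Phi$, so a $k$-separation $(R',G')$ has $R'$ or $G'$ contained in a petal of $\Phi$ precisely when it has a side contained in a bag of $T$. Second, every $k$-separation displayed by $\Phi$ is displayed by $T$: for $n\geq 3$ the components of $T-v$ are the singleton leaves carrying $P_1,\ldots,P_n$, so the partition displayed by $v$ is exactly $\Phi$ (with the cyclic order reproduced in the daisy case), making $v$ a $k$-flower vertex whose displayed separations are precisely those of $\Phi$; for $n=2$ the unique edge displays $(P_1,P_2)$, the only separation of $\Phi$; and for $n=1$ there is nothing to display.

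With these two facts recorded, the argument is immediate. By Theorem \ref{TMkflowerconf}, every $(k,\mathcal{S})$-separation $(R,G)$ conforms with $\Phi$, so $(R,G)$ is $\mathcal{T}$-equivalent either to a separation displayed by $\Phi$ or to a separation $(R',G')$ with a side contained in a petal. In the first case that separation is also displayed by $T$; in the second, the relevant side lies in a bag of $T$. Hence $(R,G)$ conforms with $T$, establishing (P5) and completing the verification that $T$ is a partial $(k,\mathcal{S})$-tree.

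The hard work is entirely contained in Theorem \ref{TMkflowerconf}; the only mild obstacle here is the bookkeeping in the degenerate cases $n=1$ and $n=2$, where $T$ has no $k$-flower vertex and ``conforms'' must be read directly off the bags and, when $n=2$, the single displayed separation $(P_1,P_2)$.
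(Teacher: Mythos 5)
Your proposal is correct and follows exactly the route the paper intends: the paper itself only remarks that an $\mathcal{S}$-tight flower's associated tree ``easily'' satisfies (P1)--(P4) and then declares the corollary an immediate consequence of Theorem \ref{TMkflowerconf}, which is precisely your reduction of conformity with $T$ to conformity with $\Phi$. Your verification of (P1)--(P4) (via Lemma \ref{nompetalt} for the edge separations, tightness for the $\mathcal{S}$-order, and Lemma \ref{flowereq} for the absence of $\mathcal{T}$-loose petals) correctly fills in the details the paper leaves implicit.
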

  
The next result is used in the proof of Theorem \ref{bigone}.

\begin{lemma}
 \label{TMflow}
Let $\mathcal{T}$ be a tangle of order $k$ in a connectivity system $(E,\lambda)$, and let $\mathcal{S}$ be a tree compatible set. If $(R,G)$ is a $(k,\mathcal{S})$-separation, then there is an $\mathcal{S}$-tight $\mathcal{S}$-maximal $k$-flower in $\mathcal{T}$ that displays a $(k,\mathcal{S})$-separation that is $\mathcal{T}$-equivalent to $(R,G)$. 
\end{lemma}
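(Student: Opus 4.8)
The plan is to start from the trivial flower $(R,G)$ itself, enlarge it within the quasi-order $\preccurlyeq_{\mathcal{S}}$ to an $\mathcal{S}$-maximal flower, and then pass to an equivalent flower with as few petals as possible. First I would check that $(R,G)$ is a $k$-flower of order two. Since $(R,G)$ is a $(k,\mathcal{S})$-separation, both $R$ and $G$ lie in $\mathcal{S}$, so $R$ and $G=E-R$ are $\mathcal{T}$-strong $k$-separating sets and $(R,G)$ is a $\mathcal{T}$-strong partition of $E$ into two non-empty parts. The only remaining flower condition for $n=2$ is that $E=R\cup G$ be $k$-separating, and this holds because $\lambda(E)=\lambda(\emptyset)\leq\lambda(A)<k$ for any $A\in\mathcal{T}$ by Lemma~\ref{elemconn}(i). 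Thus $(R,G)$ is a $k$-flower in $\mathcal{T}$ that displays the $(k,\mathcal{S})$-separation $(R,G)$.

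The key observation is that the property ``displays some $(k,\mathcal{S})$-separation that is $\mathcal{T}$-equivalent to $(R,G)$'' is upward closed under $\preccurlyeq_{\mathcal{S}}$: if a $k$-flower $\Psi$ has this property and $\Psi\preccurlyeq_{\mathcal{S}}\Psi'$, then the $(k,\mathcal{S})$-separation displayed by $\Psi$ that is equivalent to $(R,G)$ has, by the definition of $\preccurlyeq_{\mathcal{S}}$, a $\mathcal{T}$-equivalent $(k,\mathcal{S})$-separation displayed by $\Psi'$, and transitivity of $\mathcal{T}$-equivalence gives that $\Psi'$ displays one equivalent to $(R,G)$. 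Since $E$ is finite there are only finitely many ordered partitions of $E$, hence finitely many $k$-flowers in $\mathcal{T}$, so the quasi-order $\preccurlyeq_{\mathcal{S}}$ has a maximal element $\Phi$ with $(R,G)\preccurlyeq_{\mathcal{S}}\Phi$. By the upward-closure just noted, this $\mathcal{S}$-maximal flower $\Phi$ displays a $(k,\mathcal{S})$-separation that is $\mathcal{T}$-equivalent to $(R,G)$.

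Finally I would tighten $\Phi$. Among all $k$-flowers in $\mathcal{T}$ that are $\mathcal{T}$-equivalent to $\Phi$ with respect to $\mathcal{S}$, choose one, say $\Phi'$, with the smallest number of petals. Then $\Phi'$ is $\mathcal{S}$-tight, since any flower equivalent to $\Phi'$ is also equivalent to $\Phi$, and so by the minimality of $\Phi'$ it cannot have fewer petals. Moreover $\Phi'$ is still $\mathcal{S}$-maximal, because $\mathcal{S}$-maximality is invariant under equivalence: if $\Phi'\preccurlyeq_{\mathcal{S}}\Psi$ then $\Phi\preccurlyeq_{\mathcal{S}}\Phi'\preccurlyeq_{\mathcal{S}}\Psi$, and maximality of $\Phi$ forces $\Psi$ to be equivalent to $\Phi$, hence to $\Phi'$. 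Since equivalent flowers display the same $(k,\mathcal{S})$-separations up to $\mathcal{T}$-equivalence, $\Phi'$ still displays a $(k,\mathcal{S})$-separation $\mathcal{T}$-equivalent to $(R,G)$, which is exactly what is required.

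This is essentially a finiteness and maximal-element argument, so I do not anticipate a serious obstacle; the only points requiring care are the verification that $(R,G)$ is a genuine order-two flower (in particular that $E$ is $k$-separating and that both parts are non-empty and $\mathcal{T}$-strong) and the two invariance facts, namely upward-closure of the displaying property and invariance of $\mathcal{S}$-maximality under $\mathcal{T}$-equivalence. Both invariance facts follow directly from the definitions of $\preccurlyeq_{\mathcal{S}}$ and of $\mathcal{T}$-equivalence of $k$-flowers, so no appeal to the robustness of $\mathcal{T}$ or to Theorem~\ref{TMkflowerconf} is needed here.
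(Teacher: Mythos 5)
Your proof is correct and follows essentially the same route as the paper: start from the two-petal flower $(R,G)$, pass to an $\mathcal{S}$-maximal flower above it in $\preccurlyeq_{\mathcal{S}}$, and then replace it by an equivalent $\mathcal{S}$-tight flower, noting that tightness, maximality, and the displayed separation survive these replacements. The paper's proof is just a terser version of this argument, leaving implicit the finiteness and invariance checks that you spell out.
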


\begin{proof}
Assume that $(R,G)$ is a $(k,\mathcal{S})$-separation of $\lambda$. Then $\Phi=(R,G)$ is an $\mathcal{S}$-tight $k$-flower in $\mathcal{T}$, and evidently $\Phi$ displays $(R,G)$. Let $\Phi '$ be an $\mathcal{S}$-maximal $k$-flower in $\mathcal{T}$ such that $\Phi '\succcurlyeq_{\mathcal{S}} \Phi$, and let $\Phi ''$ be an $\mathcal{S}$-tight $k$-flower in $\mathcal{T}$ that is equivalent to $\Phi '$. Then $\Phi ''$ is an $\mathcal{S}$-tight $\mathcal{S}$-maximal $k$-flower in $\mathcal{T}$, and $\Phi '' \succcurlyeq_{\mathcal{S}}\Phi$, so $\Phi ''$ displays a $(k,\mathcal{S})$-separation $(R',G')$ that is $\mathcal{T}$-equivalent to $(R,G)$.
\end{proof}

The remainder of this section is devoted to developing the preliminary lemmas needed to prove the following lemma, which is the main component in the proof of Theorem \ref{bigone}. 

\begin{lemma}
 \label{tree}
Let $\mathcal{T}$ be a robust tangle of order $k$ in a connectivity system $(E,\lambda)$, and let $\mathcal{S}$ be a tree compatible set. Let $T$ be a non-trivial partial $(k,\mathcal{S})$-tree for $\mathcal{T}$. If there is a $(k,\mathcal{S})$-separation $(R,G)$ that is not $\mathcal{T}$-equivalent to any $(k,\mathcal{S})$-separation displayed by $T$, then there is a partial $(k,\mathcal{S})$-tree $T'$ such that $T'\succcurlyeq_{\mathcal{S}} T$ and $T'$ displays some $(k,\mathcal{S})$-separation that is not displayed by $T$.
\end{lemma}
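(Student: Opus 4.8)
The plan is to pin $(R,G)$ down to a single bag of $T$ and then refine that bag, using a tight $\mathcal{S}$-maximal flower equivalent to $(R,G)$ both to organise the refinement and, crucially, to re-establish conformity. First I would exploit (P5): since $T$ is a partial $(k,\mathcal{S})$-tree, $(R,G)$ conforms with $T$, and since by hypothesis $(R,G)$ is not $\mathcal{T}$-equivalent to any $(k,\mathcal{S})$-separation displayed by $T$, the only surviving possibility is that $(R,G)$ is $\mathcal{T}$-equivalent to some $(R',G')$ with $R'$ or $G'$ contained in a bag $B$ of $T$. Replacing $(R,G)$ by this equivalent separation and relabelling, I may assume $G\subseteq B$, where $B$ is the bag at a bag vertex $b$. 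Then $E-B\subseteq R$, so the whole of $T$ outside $b$ lies on the $R$-side: the components of $T-b$ partition $E-B$ into $\mathcal{T}$-strong $k$-separating sets, each contained in $R$.

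Next I would produce the flower to be grafted. By Lemma \ref{TMflow} there is an $\mathcal{S}$-tight $\mathcal{S}$-maximal $k$-flower $\Psi$ in $\mathcal{T}$ displaying a $(k,\mathcal{S})$-separation $\mathcal{T}$-equivalent to $(R,G)$, and by Theorem \ref{TMkflowerconf}---the one point at which robustness of $\mathcal{T}$ is essential---every $(k,\mathcal{S})$-separation conforms with $\Psi$. I would then align $\Psi$ with the bag $B$. Writing $(\bar R,\bar G)$ for the separation displayed by $\Psi$ that is equivalent to $(R,G)$, the symmetric difference of $\bar R$ and $R$ lies in the full-closure slack and so consists of $\mathcal{T}$-weak sets; shifting these between petals via Lemma \ref{flowereq3}, I can pass to an equivalent tight flower whose petals on the $R$-side union to exactly $R$ and whose remaining petals partition $G$. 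Concatenating the $R$-side petals into a single petal equal to $R$ then yields a flower $\Psi^{*}=(R,G_1,\dots,G_s)$ with $G_1\cup\cdots\cup G_s=G$; by Lemma \ref{tightconcatcond} this concatenation has no $\mathcal{T}$-loose petals, and it is a $k$-anemone or $k$-daisy by Corollary \ref{flowerclass}. If the $G$-side carries only one petal (so $\Psi$ has $\mathcal{S}$-order two), this step degenerates and I treat $G$ as a single new bag rather than a flower vertex.

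I would then build $T'$ by grafting $\Psi^{*}$ at $b$: shrink the bag at $b$ to $B-G$, retain all of its edges to the external subtrees (which all lie in $R$), and attach through a new edge either a single bag vertex carrying $G$ (the degenerate case) or a new flower vertex $w$ with petals $G_1,\dots,G_s$, the ambient $R$ serving as its remaining petal. The new edge displays $(R,G)$, which is a $(k,\mathcal{S})$-separation as $R,G\in\mathcal{S}$. Axioms (P1)--(P4) follow routinely: the external edges still display their old $\mathcal{T}$-strong $k$-separations, the new edge satisfies (P1), and $w$ is a flower vertex of $\mathcal{S}$-order at least three with no $\mathcal{T}$-loose petals by the previous paragraph, so it meets (P3) or (P4). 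That $T'\succcurlyeq_{\mathcal{S}}T$ is immediate, since only the bag $B$ was subdivided---on the side away from every external edge and every flower vertex of $T$---so every $(k,\mathcal{S})$-separation displayed by $T$ is still displayed by $T'$, while $T'$ additionally displays a separation equivalent to $(R,G)$, which is not displayed by $T$ by hypothesis.

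The main obstacle is verifying (P5) for $T'$, namely that every $(k,\mathcal{S})$-separation $(X,Y)$ conforms with $T'$. I would argue by cases on how $(X,Y)$ conforms with $T$. If $(X,Y)$ is equivalent to a separation displayed by $T$, that separation is still displayed by $T'$; and if it is equivalent to one contained in a bag other than $B$, that bag survives in $T'$. The genuinely hard case is when $(X,Y)$ is equivalent to a separation inside the old bag $B$, since $B$ has been refined and $(X,Y)$ may cross the newly displayed $(R,G)$. Here I would invoke that $(X,Y)$ conforms with the tight maximal flower $\Psi$: either $(X,Y)$ is equivalent to a separation displayed by $\Psi$---in which case, after translating through the alignment, it is displayed at the flower vertex $w$ of $T'$, or it folds back into the external structure, where the conformity of $(X,Y)$ with $T$ finishes the job after reconciling representatives by Lemma \ref{3.3}---or $(X,Y)$ is equivalent to a separation inside a petal of $\Psi$, which after alignment lands inside one of the new bags $G_i$ or back inside $R$. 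Making this reconciliation airtight, in particular ruling out any $(k,\mathcal{S})$-separation left crossing both the grafted flower and the external tree, is the crux of the argument, and is exactly where robustness (through Theorem \ref{TMkflowerconf}) together with the $\mathcal{S}$-tightness and $\mathcal{S}$-maximality of $\Psi$ are all needed.
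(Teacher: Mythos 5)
Your proposal contains a genuine gap, and it sits exactly where you acknowledge the ``crux'' lies. The fatal step is the alignment of the flower to the tree: you claim that, starting from a tight $\mathcal{S}$-maximal flower $\Psi$ displaying $(\bar R,\bar G)$ equivalent to $(R,G)$, you can shift the symmetric difference of $\bar R$ and $R$ between petals via Lemma \ref{flowereq3} so as to obtain an equivalent tight flower whose petals union to \emph{exactly} $R$ and exactly $G$. This is unjustified. Lemma \ref{flowereq3} moves a single non-empty $\mathcal{T}$-weak set $X$ with $P_1\cup X$ $k$-separating; but $R-\bar R\subseteq \fcl_{\mathcal{T}}(\bar R)-\bar R$ and $\bar R-R\subseteq \fcl_{\mathcal{T}}(\bar G)-\bar G$ are merely unions of pieces of partial $k$-sequences, need not themselves be $\mathcal{T}$-weak, need not decompose into pieces that can be moved one at a time while keeping the intermediate sets $k$-separating, and nothing guarantees the two transfers (into the $R$-side and out of it) can be interleaved consistently. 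No lemma in the paper supports this exact-alignment claim, and the paper's own proof conspicuously avoids ever attempting it. Consequently the grafted flower $\Psi^{*}=(R,G_1,\dots,G_s)$, on which your entire construction of $T'$ and your (P1)--(P4) verification rest, is not known to exist. On top of this, your (P5) verification is left as a sketch of cases with the hardest one (``folds back into the external structure \dots reconciling representatives by Lemma \ref{3.3}'') unresolved, so the proposal does not close.

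The paper resolves the alignment problem in the opposite direction: it aligns the \emph{tree} to the flower, which is what the available machinery supports. Concretely, it first reduces to the case where the bag $B$ containing $R$ is terminal (case (II) splits off a maximal $k$-separating $Z$ with $R\subseteq Z\subseteq B$ as a new leaf), then normalises $T$ so that $E-B$ is fully closed (Corollary \ref{treeecor}), chooses $Z$ maximal with $R\subseteq Z\subsetneq B$, and splits into two cases according to whether $B\cap W$ (where $W=E-Z$) is $k$-separating. If it is not, a direct leaf-adjunction works; if it is, $(Z,B\cap W,E-B)$ is a $k$-flower, one passes to a tight $\mathcal{S}$-maximal flower $\Phi'$ above it, observes that full closure of $E-B$ forces $B\subseteq C$ for the displayed separation $(C,E-C)$ equivalent to $(B,E-B)$, and then invokes Lemma \ref{openpath} to replace $T$ by an equivalent tree whose terminal bag \emph{is} $C$, a union of petals of $\Phi'$. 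The flower is then grafted without any need to deform it. Note also that the maximality of $Z$ is used repeatedly in the paper's (P5) check to derive contradictions when a separation crosses the new structure, and that the new separation displayed is $(W,Z)$ rather than one equivalent to $(R,G)$ --- the lemma only demands \emph{some} newly displayed $(k,\mathcal{S})$-separation. Your proposal has no analogue of either the full-closure normalisation, the maximal $Z$, or Lemma \ref{openpath}, and these are precisely the tools that make the argument go through.
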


Let $T$ be a partial $(k,\mathcal{S})$-tree for $\mathcal{T}$. If $B$ is a terminal bag of $T$ such that the partition $(B,E-B)$ is a $(k,\mathcal{S})$-separation, then $B$ is called an \textit{$\mathcal{S}$-terminal-bag} of $T$. The main step towards a proof of Lemma \ref{tree} is to show that if $T$ has an $\mathcal{S}$-terminal-bag $B$, and $(C,E-C)$ is a $(k,\mathcal{S})$-separation such that $\fcl_{\mathcal{T}}(B)=\fcl_{\mathcal{T}}(C)$, then there is some partial $(k,\mathcal{S})$-tree $T'$ that is equivalent to $T$ such that $C$ is a terminal bag of $T'$.

We say that two $k$-separations $(A,B)$ and $(C,D)$ of $\mathcal{T}$ \textit{cross} if the intersections $A\cap C$, $A\cap D$, $B\cap C$, and $B\cap D$ are all non-empty. A set $\mathcal{S}$ of $k$-separations of $\mathcal{T}$ is \textit{laminar} if no two separations in $\mathcal{S}$ cross. We have the following straightforward lemma. We omit the routine proof.

\begin{lemma}
\label{laminar}
If $T$ is a partial $(k,\mathcal{S})$-tree for $\mathcal{T}$, then the set of $k$-separations displayed by edges of $T$ is laminar.
\end{lemma}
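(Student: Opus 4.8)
The plan is to treat this as a purely combinatorial fact about edge-cuts in a tree: the partition $\pi$ of $E$ among the bag vertices, combined with the tree structure of $T$, forces any two edge separations to be nested rather than crossing. The only property of partial $(k,\mathcal{S})$-trees I actually need is that, by (P1), each edge $e$ genuinely displays a partition $(X,Y)$ of $E$; the connectivity content of (P1) plays no role in laminarity. I would first record the elementary observation that, because $\pi$ is a partition of $E$ and each bag labels a \emph{unique} vertex, vertex-disjoint subtrees of $T$ display \emph{disjoint} subsets of $E$, and the sets displayed by the connected components of $T\del e$ partition $E$.

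Next I would fix two distinct edges $e$ and $f$ of $T$ and delete both. Since $T$ is a tree and $e\neq f$, deleting these two edges yields exactly three connected components: a component $T_e$ incident with $e$ but not $f$, a component $T_f$ incident with $f$ but not $e$, and a ``middle'' component $T_m$ (possibly a single vertex, when $e$ and $f$ are adjacent) that is incident with both. Writing $A$, $M$, and $C$ for the subsets of $E$ displayed by $T_e$, $T_m$, and $T_f$ respectively, the observation above gives that $A$, $M$, $C$ are pairwise disjoint with $A\cup M\cup C=E$.

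I would then identify the two edge separations in terms of these three pieces. Deleting only $e$ leaves $f$ intact, so $T_m$ and $T_f$ remain joined through $f$; hence the two components of $T\del e$ display $A$ and $M\cup C$, so $\{X_e,Y_e\}=\{A,\,M\cup C\}$. Symmetrically $\{X_f,Y_f\}=\{C,\,A\cup M\}$. Since $A\cap C=\emptyset$, among the four intersections that would have to be non-empty for $(X_e,Y_e)$ and $(X_f,Y_f)$ to cross, the one coming from the $A$-part of $e$ and the $C$-part of $f$ is empty. Therefore these two separations do not cross. As $e$ and $f$ were arbitrary distinct edges, the family of separations displayed by the edges of $T$ is laminar.

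This is genuinely routine, and I do not expect a real obstacle; the only point requiring care is the bookkeeping in the three-component decomposition, namely verifying that deleting a single edge re-merges exactly two of the three pieces in the way claimed (so that the edge separations are precisely $\{A,M\cup C\}$ and $\{C,A\cup M\}$), and confirming the displayed sets partition $E$ despite the presence of empty bags and unlabelled vertices. Those contribute nothing to any displayed set and so do not affect disjointness.
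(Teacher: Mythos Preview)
Your argument is correct and is precisely the kind of routine tree-combinatorics argument the paper has in mind; note that the paper actually omits the proof entirely, stating only ``We omit the routine proof.'' There is therefore nothing to compare against, but your three-component decomposition of $T\setminus\{e,f\}$ and the observation that $A\cap C=\emptyset$ is the standard way to establish laminarity of edge separations in a $\pi$-labelled tree, and your handling of empty bags and unlabelled vertices is fine.
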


\begin{lemma}
\label{bigeqtree}
Let $\mathcal{T}$ a tangle of order $k$ in a connectivity system $(E,\lambda)$, and let $\mathcal{S}$ be a tree compatible set. Let $T$ be a partial $(k,\mathcal{S})$-tree for $\mathcal{T}$, and let $B$ be an $\mathcal{S}$-terminal-bag of $T$ labelling a leaf $w$ of $T$. If $X\subseteq E-B$ is a non-empty $\mathcal{T}$-weak set such that $B\cup X$ is $k$-separating, then there is a partial $(k,\mathcal{S})$-tree $T'$ that is equivalent to $T$ such that $B\cup X$ is an $\mathcal{S}$-terminal-bag of $T'$.
\end{lemma}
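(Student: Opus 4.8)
The plan is to first dispose of the ``separation-level'' content and then to realise it at the ``tree level'' by an induction that moves $X$ toward $w$ through the tree. At the separation level, note that since $B$ is an $\mathcal{S}$-terminal-bag the partition $(B,E-B)$ is a $(k,\mathcal{S})$-separation, and in particular it is non-sequential. As $X\subseteq E-B$ is a non-empty $\mathcal{T}$-weak set with $B\cup X$ $k$-separating, Lemma \ref{bigeqlemma}(ii) shows that $(B\cup X, E-(B\cup X))$ is $\mathcal{T}$-equivalent to $(B,E-B)$ and that $E-(B\cup X)$ is $\mathcal{T}$-strong; hence $(B\cup X, E-(B\cup X))$ is a $(k,\mathcal{S})$-separation by (S1). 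I would also record that $(X)$ is a partial $k$-sequence for $B$, so $X\subseteq \fcl_{\mathcal{T}}(B)$ by Lemma \ref{fclcontain}. Thus the only thing left is to produce an equivalent partial $(k,\mathcal{S})$-tree in which $B\cup X$ labels a leaf.

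I would do this by induction on the number of vertices of $T$, examining the neighbour $v$ of $w$; the edge $e=wv$ displays $(B,E-B)$. If $v$ is a flower vertex, then $B$ is a petal of the flower $\Phi=(B,P_2,\ldots,P_n)$ displayed by $v$, and by (P3) and (P4) this flower has $\mathcal{S}$-order at least three and no $\mathcal{T}$-loose petals. Lemma \ref{flowereq3} then gives the $\mathcal{T}$-equivalent $k$-flower $\Phi'=(B\cup X, P_2-X,\ldots, P_n-X)$, and Lemma \ref{tightflowerwin} guarantees that each petal $P_i-X$ remains $\mathcal{T}$-strong. The remaining task is to re-realise $\Phi'$ as a tree: for each $i$ I would rearrange the subtree $T_i$ hanging off $v$ (which currently displays $P_i$) so that it displays $P_i-X$, pushing $X\cap P_i$ out toward $v$ and ultimately into the new bag $B\cup X$. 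This is where the inductive hypothesis is applied, to each strictly smaller $T_i$. If instead $v$ is a bag vertex, the situation is easier: no edge other than $e$ separates $w$ from $v$, so transferring elements between the bags at $w$ and $v$ changes only the partition displayed by $e$, and the part of $X$ lying beyond the bag at $v$ is first pulled inward by the inductive hypothesis.

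The hard part will be the recursive rearrangement of the petal-subtrees $T_i$, and in particular setting up the recursive calls legitimately. The natural move is to regard $T_i$ as a partial $(k,\mathcal{S})$-tree by attaching the complementary region $E-P_i$ as a pendant terminal bag and then to \emph{grow} that bag by the $\mathcal{T}$-weak set $X\cap P_i$; the subtlety is that, although $E-P_i\in\mathcal{S}$ by Lemma \ref{nompetalt}, the set $P_i$ need not lie in $\mathcal{S}$ (the petal $P_i$ may be $\mathcal{T}$-sequential), so $E-P_i$ need not be an $\mathcal{S}$-terminal-bag of this auxiliary tree. I expect to resolve this either by proving the statement for the slightly weaker hypothesis that $(B,E-B)$ is merely a $\mathcal{T}$-strong $k$-separation, or by extracting $X$ edge-by-edge through $T$ using Lemma \ref{flowereq3} and Lemma \ref{bigeqlemma} while maintaining $\mathcal{T}$-strength of every displayed side via Lemma \ref{tightflowerwin}. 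Throughout, one must check that the reassembled tree $T'$ still satisfies (P1)--(P4) --- the $\mathcal{T}$-strength of each displayed separation being the delicate point, handled at flower vertices by Lemma \ref{tightflowerwin} --- and that $T'$ is equivalent to $T$; property (P5) for $T'$ then follows from (P5) for $T$ together with the equivalence and the laminarity of the displayed separations (Lemma \ref{laminar}).
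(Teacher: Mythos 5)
Your separation-level observations are correct and match the paper: $(B\cup X, E-(B\cup X))$ is a $(k,\mathcal{S})$-separation equivalent to $(B,E-B)$ by Lemma \ref{bigeqlemma}(ii) and (S1), and at a flower vertex the relevant move is exactly Lemma \ref{flowereq3} (whose hypotheses are supplied by (P3)/(P4)). But the tree-level construction, which is the actual content of the lemma, is not carried out, and you yourself identify the obstruction without resolving it: to recurse on a petal-subtree $T_i$ you would attach $E-P_i$ as a pendant bag and invoke the lemma on that auxiliary tree, yet the lemma's hypothesis demands that the pendant bag be an $\mathcal{S}$-terminal-bag, i.e.\ that $(E-P_i,P_i)$ be a $(k,\mathcal{S})$-separation, and $P_i$ may well be $\mathcal{T}$-sequential, so $P_i\notin\mathcal{S}$. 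Ending with ``I expect to resolve this either by proving a weaker-hypothesis version or by extracting $X$ edge-by-edge'' leaves the induction without a legitimate inductive step. Neither escape route is free: weakening the hypothesis to ``$(B,E-B)$ is $\mathcal{T}$-strong'' changes the statement (you must then re-derive where non-sequentiality is used, e.g.\ every application of Lemma \ref{bigeqlemma}(ii)), and in either case the recursive output is only \emph{some} equivalent partial $(k,\mathcal{S})$-tree, with no control over its shape, so regluing the modified subtrees at $v$ forces you to re-verify (P1)--(P5) and equivalence for the assembled tree from scratch --- none of which is sketched.

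The gap is avoidable because no induction is needed at all; this is what the paper does. Define $T'$ on the \emph{same} tree by a single global relabelling: give the leaf $w$ the bag $B\cup X$ and replace every other bag $B'$ by $B'-X$, keeping all non-bag vertices, their labels, and their cyclic orders. Then every subset displayed by a subtree of $T'$ is obtained from the corresponding displayed subset of $T$ by moving $X$ across, and each required property is checked directly: at a non-bag vertex the displayed partition is $(P_1\cup X, P_2-X,\ldots,P_n-X)$ with $B\subseteq P_1$, and one uncrossing (using (T2) on $B\cup(P_1\cap X)$) shows $P_1\cup X$ is $k$-separating, so Lemma \ref{flowereq3} gives an equivalent flower with no $\mathcal{T}$-loose petals; at an edge between two bag vertices, laminarity (Lemma \ref{laminar}) forces $B\subseteq R$ for the displayed $(k,\mathcal{S})$-separation $(R,G)$, and uncrossing $B\cup X$ with $R$ plus Lemma \ref{bigeqlemma}(ii) and (S1) shows $(R\cup X,G-X)$ is an equivalent $(k,\mathcal{S})$-separation; and (P5) for $T'$ follows by taking a $(k,\mathcal{S})$-separation $(R,G)$ with $R$ inside a bag $B'\neq B$ of $T$ and replacing it by the equivalent $(R-X,G\cup X)$, whose first side lies in the bag $B'-X$ of $T'$. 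This one-shot argument is precisely the idea your plan is missing: you do not need to transport $X$ through the tree, because subtracting $X$ from all bags simultaneously keeps every verification local to a single edge or a single flower vertex.
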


\begin{proof}
Suppose that $X\subseteq E-B$ is a non-empty $\mathcal{T}$-weak set such that $B\cup X$ is $k$-separating. If $\Phi=(P_1,\ldots, P_n)$ is a $k$-flower in $\mathcal{T}$ corresponding to a $k$-flower vertex of $T$, then we may assume that the petals of $\Phi$ are labelled such that $B\subseteq P_1$. Let $T'$ be the $\pi$-labelled tree obtained by relabelling $T$ such that:
\begin{enumerate}
 \item[(i)] the leaf $w$ of $T'$ is labelled by the bag $B\cup X$, and, if $u\neq w$ is a bag vertex of $T$ labelled by the bag $B'$, then $u$ is a bag vertex of $T'$ labelled by the bag $B'-X$; and
 \item[(ii)] if $v$ is a non-bag vertex of $T$ labelled by $X\in \{D,A\}$, then $v$ is a non-bag vertex of $T'$ labelled by $X$. Moreover, if a cyclic ordering is imposed on the edges of $T$ that are incident with $v$, then the cyclic ordering is imposed on the edges of $T'$ that are incident with $v$.
\end{enumerate}
It is clear that $T$ and $T'$ have the same bag vertices and non-bag vertices. We also see that $B\cup X$ is an $\mathcal{S}$-terminal-bag of $T '$ by (i), Lemma \ref{bigeqlemma} (ii) and (S1). It remains, then, to show that $T'$ is a partial $(k,\mathcal{S})$-tree for $\mathcal{T}$ that is equivalent to $T$. It follows immediately from (ii) that $T'$ satisfies (P2). In the following three sublemmas we show that $T'$ satisfies the remaining partial $(k,\mathcal{S})$-tree axioms.
\begin{claim}
\label{bigeqtree.1}
If $v$ is a non-bag vertex of $T'$, and $\Phi$ is the $k$-flower corresponding to the $k$-flower vertex $v$ of $T$, then the partition of $E$ displayed by the components of $T'-v$ is a $k$-flower $\Phi '$ that is equivalent to $\Phi$ and has no $\mathcal{T}$-loose petals.
\end{claim}

\begin{subproof}
Assume that the vertex $v$ of $T'$ is a non-bag vertex, and that $\Phi=(P_1, P_2, \ldots, P_n)$ is the $k$-flower corresponding to the $k$-flower vertex $v$ of $T$. Then the partition of $E$ displayed by $T'-v$, with the same ordering of the components as $T-v$, is $\Phi '=(P_1\cup X, P_2-X,\ldots, P_n-X)$. The set $P_1\cup X$ is the union of the $k$-separating sets $P_1$ and $B\cup X$, whose intersection is $B\cup (P_1\cap X)$. Since both $B\cup (P_1\cap X)$ and $E-(B\cup (P_1\cap X))$ are $\mathcal{T}$-strong, it follows from (T2) that $\lambda(B\cup (P_1\cap X))\geq k$. Thus, by uncrossing $P_1$ and $B\cup X$, the set $P_1\cup X$ is $k$-separating. It now follows from Lemma \ref{flowereq3} that $\Phi '$ is a $k$-flower that is equivalent to $\Phi$, and that $\Phi '$ has no $\mathcal{T}$-loose petals.
\end{subproof}

\begin{claim}
\label{bigeqtree.2}
 If $e$ is an edge of $T'$, then the partition of $E$ displayed by the components of $T'\del e$ is a $\mathcal{T}$-strong $k$-separation. Moreover, if $e$ is incident with two bag vertices of $T'$, then the partition of $E$ displayed by the components of $T'\del e$ is a $(k,\mathcal{S})$-separation that is $\mathcal{T}$-equivalent to the $(k,\mathcal{S})$-separation displayed by the components of $T\del e$.
\end{claim}

\begin{subproof}
Let $e$ be an edge of $T'$. If $e$ is incident with a $k$-flower vertex of $T'$, then the partition of $E$ displayed by the components of $T'\del e$ is a $\mathcal{T}$-strong $k$-separation by \ref{bigeqtree.1}. We may therefore assume that $e$ is incident with two bag vertices of $T'$. Then $e$ is also incident with two bag vertices of $T$, so the partition of $E$ displayed by the components of $T\del e$ is a $(k,\mathcal{S})$-separation $(R,G)$ by (P1). Now $(B,E-B)$ is also a $k$-separation displayed by an edge of $T$, so it follows from Lemma \ref{laminar} that $(B,E-B)$ does not cross $(R,G)$. Thus we may assume, up to switching $R$ and $G$, that $B\subseteq R$ because $B$ is a bag of $T$. Then $(R\cup X, G-X)$ is the partition of $E$ displayed by the components of $T'\del e$, and $R\cup X$ is $k$-separating by uncrossing $B\cup X$ and $R$, so $(R\cup X, G-X)$ is $\mathcal{T}$-equivalent to $(R,G)$ by Lemma \ref{bigeqlemma} (ii). Hence $(R\cup X, G-X)$ is a $(k,\mathcal{S})$-separation by (S1).
\end{subproof}
   
\begin{claim}
\label{bigeqtree.3}
 Every $(k,\mathcal{S})$-separation of $\lambda$ conforms with $T'$.
\end{claim}

\begin{subproof}
 Seeking a contradiction, suppose that $(R,G)$ is a $(k,\mathcal{S})$-separation that does not conform with $T'$. Then $(R,G)$ conforms with $T$ because $T$ is a partial $(k,\mathcal{S})$-tree, so, by possibly replacing $(R,G)$ by an equivalent $(k,\mathcal{S})$-separation, we may assume that either $(R,G)$ is displayed by $T$ or $R$ is contained in a bag of $T$. If $(R,G)$ is displayed by $T$, then it follows immediately from \ref{bigeqtree.1} and \ref{bigeqtree.2} that there is some $(k,\mathcal{S})$-separation that is equivalent to $(R,G)$ and displayed by $T'$. Thus we may assume that $R\subseteq B'$ for some bag $B'\not=B$ of $T$. We may further assume that both $R\cap (B'-X)$ and $R\cap X$ are non-empty, since $B'-X$ is a bag of $T'$ by (i). We now show that $(R-X, G\cup X)$ is a $(k,\mathcal{S})$-separation that is equivalent to $(R,G)$. Since $B\subseteq G$, the set $G\cup X$ is $k$-separating by uncrossing $B\cup X$ and $G$, so, by Lemma \ref{bigeqlemma}(ii) the $\mathcal{T}$-strong $k$-separation $(R-X,G\cup X)$ is equivalent to $(R,G)$. Hence $(R-X,G\cup X)$ is a $(k,\mathcal{S})$-separation by (S1). But $R-X\subseteq B'-X$, so $(R,G)$ conforms with $T'$; a contradiction. 
\end{subproof}
 
It follows from \ref{bigeqtree.1}, \ref{bigeqtree.2}, and \ref{bigeqtree.3} that $T'$ is a partial $(k,\mathcal{S})$-tree for $\mathcal{T}$. Moreover, it follows from  \ref{bigeqtree.1} and \ref{bigeqtree.2} that $T$ and $T'$ are equivalent partial $(k,\mathcal{S})$-trees.  
\end{proof}

\begin{cor}
\label{fclbag} 
Let $\mathcal{T}$ a tangle of order $k$ in a connectivity system $(E,\lambda)$, and let $\mathcal{S}$ be a tree compatible set. Let $T$ be a partial $(k,\mathcal{S})$-tree for $\mathcal{T}$, and let $B$ be an $\mathcal{S}$-terminal-bag of $T$. If $(X_i)_{i=1}^m$ is a partial $k$-sequence for $B$, then there is a partial $(k,\mathcal{S})$-tree $T'$ that is equivalent to $T$ such that $B\cup (\bigcup_{i=1}^m X_i)$ is an $\mathcal{S}$-terminal-bag of $T'$. 
\end{cor}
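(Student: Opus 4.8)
The plan is to prove Corollary~\ref{fclbag} by induction on the length $m$ of the partial $k$-sequence, using Lemma~\ref{bigeqtree} as the inductive engine. The base case $m=1$ is exactly Lemma~\ref{bigeqtree} applied with $X=X_1$: by the definition of a partial $k$-sequence, $X_1$ is a non-empty $\mathcal{T}$-weak subset of $E-B$ with $B\cup X_1$ $k$-separating, and these are precisely the hypotheses of that lemma. (The degenerate empty sequence is handled by taking $T'=T$.)

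For the inductive step, suppose the result holds for all partial $k$-sequences of length $m-1$, and let $(X_i)_{i=1}^m$ be a partial $k$-sequence for $B$. First I would apply the induction hypothesis to the truncated sequence $(X_i)_{i=1}^{m-1}$, which is again a partial $k$-sequence for $B$, to obtain a partial $(k,\mathcal{S})$-tree $T''$ that is equivalent to $T$ and has $B'=B\cup(\bigcup_{i=1}^{m-1}X_i)$ as an $\mathcal{S}$-terminal-bag. Since an $\mathcal{S}$-terminal-bag labels a leaf, $B'$ labels a leaf of $T''$, so $T''$ and $B'$ are in position to play the roles of $T$ and $B$ in Lemma~\ref{bigeqtree}.

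Next I would verify the hypotheses of Lemma~\ref{bigeqtree} for the tree $T''$, its terminal bag $B'$, and the set $X_m$. Because the members of the sequence are pairwise disjoint and each $X_i\subseteq E-B$, the set $X_m$ is disjoint from $B$ and from $X_1,\dots,X_{m-1}$, so $X_m\subseteq E-B'$; moreover $X_m$ is non-empty and $\mathcal{T}$-weak by the definition of a partial $k$-sequence, and $B'\cup X_m=B\cup(\bigcup_{i=1}^m X_i)$ is $k$-separating by the case $j=m$ of that same definition. Applying Lemma~\ref{bigeqtree} then yields a partial $(k,\mathcal{S})$-tree $T'$ that is equivalent to $T''$ and has $B\cup(\bigcup_{i=1}^m X_i)$ as an $\mathcal{S}$-terminal-bag. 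Finally, since $\preccurlyeq_{\mathcal{S}}$ is a quasi-order its associated equivalence is transitive, so $T'$ is equivalent to $T$, completing the induction.

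I expect no genuine obstacle here: the argument is a direct $m$-fold iteration of Lemma~\ref{bigeqtree}. The only points requiring care, which I would make sure to spell out, are the bookkeeping that $X_m\subseteq E-B'$ follows from the pairwise disjointness of the $X_i$, that each initial truncation of a partial $k$-sequence is again a partial $k$-sequence for $B$, and that equivalence of partial $(k,\mathcal{S})$-trees is transitive, so that equivalence to $T$ is preserved across all $m$ applications.
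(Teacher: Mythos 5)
Your proof is correct and is exactly the argument the paper intends: Corollary~\ref{fclbag} is stated without proof precisely because it follows by iterating Lemma~\ref{bigeqtree} along the partial $k$-sequence, which is what your induction does. The bookkeeping points you flag (truncations of a partial $k$-sequence are partial $k$-sequences, $X_m\subseteq E-B'$ by pairwise disjointness, and transitivity of equivalence of partial $(k,\mathcal{S})$-trees) are the right ones and all hold.
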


We next show that if $(X_i)_{i=1}^m$ is a partial $k$-sequence for $E-B$, then there is a partial $(k,\mathcal{S})$-tree that is $\mathcal{T}$-equivalent to $T$ with terminal bag $B-(\bigcup_{i=1}^m X_i)$.

\begin{lemma}
\label{treee}
Let $\mathcal{T}$ a tangle of order $k$ in a connectivity system $(E,\lambda)$, and let $\mathcal{S}$ be a tree compatible set. Let $T$ be a partial $(k,\mathcal{S})$-tree for $\mathcal{T}$, and let $B$ be an $\mathcal{S}$-terminal-bag of $T$. If $X\subseteq B$ is a non-empty $\mathcal{T}$-weak set such that $B-X$ is $k$-separating, then there is a partial $(k,\mathcal{S})$-tree $T'$ that is equivalent to $T$ such that $B-X$ is an $\mathcal{S}$-terminal-bag of $T'$.
\end{lemma}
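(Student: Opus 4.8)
The plan is first to verify that the intended new terminal bag gives a separation of the right type, and then to obtain $T'$ from $T$ by a purely local modification at the leaf labelled $B$. Let $w$ be the leaf of $T$ labelled by $B$, and let $u$ be its unique neighbour, so $e=wu$ is a pendant edge displaying $(B,E-B)$. Since $B$ is an $\mathcal{S}$-terminal-bag, $(E-B,B)$ is a $(k,\mathcal{S})$-separation, and in particular it is non-sequential. As $X\subseteq B$ is a non-empty $\mathcal{T}$-weak set and $(E-B)\cup X=E-(B-X)$ is $k$-separating, Lemma \ref{bigeqlemma}(ii) shows that $((E-B)\cup X,\,B-X)$ is a $k$-separation that is $\mathcal{T}$-equivalent to $(E-B,B)$; both of its parts are $\mathcal{T}$-strong, so by (S1) it is a $(k,\mathcal{S})$-separation. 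Thus $(B-X,E-(B-X))$ is a $(k,\mathcal{S})$-separation.

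I then build $T'$ by subdividing the pendant edge $e$: I insert a new bag vertex $z$ on $e$, relabel the bag at $w$ as $B-X$, and give $z$ the bag $X$, leaving every other vertex, label, and cyclic ordering exactly as in $T$. In $T'$ the path $w\,z\,u$ replaces $e$, the vertex $w$ is still a leaf, and $z$ is an internal degree-two bag vertex. Because $B-X$ and $X$ lie on the same side of each edge of $T$ other than $e$ and have union $B$, every such edge of $T'$ displays precisely the partition it displayed in $T$, and every non-bag vertex of $T'$ displays precisely the flower it displayed in $T$ (the petal that equalled $B$ is now displayed by the component $\{w,z\}$, whose two bags union to $B$). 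The new edge $wz$ displays $(B-X,E-(B-X))$ and the new edge $zu$ displays $(B,E-B)$.

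Granting these observations, axioms (P1)--(P4) are routine: $wz$ displays the $(k,\mathcal{S})$-separation $(B-X,E-(B-X))$ between two bag vertices, $zu$ displays the $\mathcal{T}$-strong $k$-separation $(B,E-B)$, every other edge and every flower vertex behaves exactly as in $T$, and (P2) holds since the labels and orderings of the non-bag vertices are inherited. Equivalence of $T$ and $T'$ is then immediate: every $(k,\mathcal{S})$-separation displayed by $T$ is displayed by $T'$ (the separation $(B,E-B)$ simply migrates from $e$ to $zu$, and all others are unchanged), while the only additional separation displayed by $T'$, namely $(B-X,E-(B-X))$ on $wz$, is $\mathcal{T}$-equivalent to $(B,E-B)$. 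Finally $B-X$ labels the leaf $w$ and $(B-X,E-(B-X))$ is a $(k,\mathcal{S})$-separation, so $B-X$ is an $\mathcal{S}$-terminal-bag of $T'$, as required.

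The step that needs genuine work is (P5), and this is where I expect the main obstacle. Let $(R,G)$ be a $(k,\mathcal{S})$-separation; using that $(R,G)$ conforms with $T$ and replacing it by an equivalent separation, the only case not immediately inherited by $T'$ is when $R$ lies in the old terminal bag $B$ but in neither of the new bags $B-X$ and $X$ (separations equivalent to ones displayed by $T$, and containments in bags $B'\neq B$, transfer directly). In this case I would first show that $R-X=R\cap(B-X)$ is $k$-separating: the sets $R$ and $B-X$ are $k$-separating, and since $R\subseteq B$ the complement of $R\cup(B-X)$ is $(E-B)\cup(X-R)$, which together with $R\cup(B-X)$ is a $\mathcal{T}$-strong partition (the first set contains the $\mathcal{T}$-strong set $E-B$ and the second contains the $\mathcal{T}$-strong set $R$), so $\lambda((E-B)\cup(X-R))\geq k$ by (T2) and $R-X$ is $k$-separating by uncrossing $R$ and $B-X$. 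Now $R\cap X$ is a non-empty $\mathcal{T}$-weak subset of $R$ with $G\cup(R\cap X)=E-(R-X)$ $k$-separating, so Lemma \ref{bigeqlemma}(ii), applied to the non-sequential separation $(G,R)$, gives that $(R-X,\,G\cup(R\cap X))$ is $\mathcal{T}$-equivalent to $(R,G)$; since $R-X\subseteq B-X$ is contained in a bag of $T'$, the separation $(R,G)$ conforms with $T'$. The essential difficulty is precisely this last point: shrinking the terminal bag can dislodge a separation that previously sat inside $B$, and it is the $\mathcal{T}$-strong-partition and uncrossing argument that restores conformity.
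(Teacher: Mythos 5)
Your proof is correct and follows essentially the same route as the paper: the same local modification at the leaf (your subdivided edge with bags $B-X$ and $X$ is the paper's tree up to vertex naming), the same use of Lemma~\ref{bigeqlemma}(ii) and (S1) to certify the new terminal bag, and the same structure for (P5), reducing to a separation $R\subseteq B$ crossing $X$ and pushing it off $X$ by an uncrossing. The only cosmetic difference is the uncrossing you choose there (you uncross $R$ with $B-X$, the paper uncrosses $(E-B)\cup X$ with $G$); both yield the same equivalent separation $(R-X,\,G\cup(R\cap X))$.
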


\begin{proof}
Assume that $X\subseteq B$ is a non-empty $\mathcal{T}$-weak set such that $B-X$ is $k$-separating. Let $u$ be the bag vertex of $T$ that is labelled by $B$. We modify $T$ to produce a $\pi$-labelled tree $T'$ by adding a new vertex $v$ adjacent to $u$, relabelling the vertex $u$ by the bag $X$, and labelling $v$ by $B-X$. Then $B-X$ is an $\mathcal{S}$-terminal-bag of $T'$ by Lemma \ref{bigeqlemma} (ii) and (S1). It is easily verified that $T'$ satisfies the first four partial $(k,\mathcal{S})$-tree axioms, (P1)-(P4). Assume that $T'$ does not satisfy the axiom (P5). Then there is a $(k,\mathcal{S})$-separation $(R,G)$ that does not conform with $T'$. Since $T$ is a partial $(k,\mathcal{S})$-tree and $T'$ only differs from $T$ by adding $v$ and changing the bag $B$, we may assume, by possibly replacing $(R,G)$ by an equivalent $(k,\mathcal{S})$-separation, that $R\subseteq B$ and that both $R\cap X$ and $R\cap (B-X)$ are non-empty. Now, the set $G\cup X$ is the union of the $k$-separating sets $(E-B)\cup X$ and $G$. Since $((E-B)\cup X)\cap G$ contains $E-B$, and $E-(((E-B)\cup X)\cap G)$ contains $R$, the partition $(((E-B)\cup X)\cap G, E-(((E-B)\cup X)\cap G))$ is $\mathcal{T}$-strong, so $\lambda(((E-B)\cup X)\cap G)\geq k$ by (T2). Thus $G\cup X$ is $k$-separating by uncrossing $(E-B)\cup X$ and $G$, and so $(R-X, G\cup X)$ is equivalent to $(R,G)$ by Lemma \ref{bigeqlemma} (ii). Hence $(R-X, G\cup X)$ is a $(k,\mathcal{S})$-separation by (S1). But $R-X\subseteq B-X$, so $(R,G)$ conforms with $T'$; a contradiction. Thus $T'$ is indeed a partial $(k,\mathcal{S})$-tree. We now show that $T$ and $T'$ are equivalent partial $(k,\mathcal{S})$-trees. It is clear that $T\preccurlyeq_{\mathcal{S}} T'$. On the other hand, with the exception of $(B-X,E-(B-X))$, every $(k,\mathcal{S})$-separation displayed by $T'$ is also displayed by $T$. But $(B-X,E-(B-X))$ is equivalent to $(B,E-B)$ by Lemma \ref{bigeqlemma} (ii), and $(B,E-B)$ is displayed by $T$. Thus we also have $T'\preccurlyeq_{\mathcal{S}} T$, so $T$ and $T'$ are equivalent partial $(k,\mathcal{S})$-trees for $\mathcal{T}$. 
\end{proof}

\begin{cor}
 \label{treeecor}
Let $\mathcal{T}$ a tangle of order $k$ in a connectivity system $(E,\lambda)$, and let $\mathcal{S}$ be a tree compatible set. Let $T$ be a partial $(k,\mathcal{S})$-tree for $\mathcal{T}$, and let $B$ be an $\mathcal{S}$-terminal-bag of $T$. If $(X_i)_{i=1}^m$ is a partial $k$-sequence for $E-B$, then there is a partial $(k,\mathcal{S})$-tree $T'$ that is equivalent to $T$ with $\mathcal{S}$-terminal-bag $B-(\bigcup_{i=1}^m X_i)$.
\end{cor}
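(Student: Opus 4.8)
The plan is to prove the corollary by induction on the length $m$ of the partial $k$-sequence, peeling off one term at a time and invoking Lemma \ref{treee} at each stage. The base case $m=1$ is exactly Lemma \ref{treee} applied to $T$ and its $\mathcal{S}$-terminal-bag $B$, with $X=X_1$. For the inductive step I would assume the result for partial $k$-sequences of length $m-1$.

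First I would observe that the truncation $(X_i)_{i=1}^{m-1}$ is itself a partial $k$-sequence for $E-B$, since the defining condition that $(E-B)\cup(\bigcup_{i=1}^{j} X_i)$ be $k$-separating for all $j$ simply passes to the shorter initial segment. Applying the inductive hypothesis to this truncation yields a partial $(k,\mathcal{S})$-tree $T''$ that is equivalent to $T$ and has $\mathcal{S}$-terminal-bag $B':=B-(\bigcup_{i=1}^{m-1} X_i)$.

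Next I would set $X=X_m$ and verify the three hypotheses of Lemma \ref{treee} relative to the tree $T''$ and its $\mathcal{S}$-terminal-bag $B'$. The set $X_m$ is non-empty and $\mathcal{T}$-weak by the definition of a partial $k$-sequence. Since the terms of the sequence are pairwise disjoint and all contained in $E-(E-B)=B$, we have $X_m\subseteq B-(\bigcup_{i=1}^{m-1} X_i)=B'$. Finally, $B'-X_m = B-(\bigcup_{i=1}^{m} X_i)$, whose complement is $(E-B)\cup(\bigcup_{i=1}^{m} X_i)$; the latter is $k$-separating by the partial $k$-sequence condition, so $B'-X_m$ is $k$-separating by the symmetry of $\lambda$. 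Lemma \ref{treee} then supplies a partial $(k,\mathcal{S})$-tree $T'$ equivalent to $T''$ with $\mathcal{S}$-terminal-bag $B'-X_m = B-(\bigcup_{i=1}^{m} X_i)$, and transitivity of the equivalence relation gives $T'$ equivalent to $T$, completing the induction.

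There is very little that can go wrong here, so the only real bookkeeping---and the closest thing to an obstacle---is the careful translation between the partial-$k$-sequence condition (phrased for the set $E-B$ and its growing unions) and the hypotheses of Lemma \ref{treee} (phrased for the current terminal bag and a $\mathcal{T}$-weak set being removed from it). The two are dual under complementation: adjoining $X_m$ to $E-B$ on one side corresponds to deleting $X_m$ from the current bag on the other, and it is the symmetry of $\lambda$ together with pairwise disjointness of the $X_i$ that make the correspondence exact at each stage. Once this is set up, $\mathcal{T}$-strongness of each intermediate bag $B-(\bigcup_{i=1}^{j} X_i)$ need not be checked separately, since Lemma \ref{treee} (through Lemma \ref{bigeqlemma}(ii)) delivers it automatically.
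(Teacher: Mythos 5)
Your proof is correct and is exactly the argument the paper intends: the corollary is stated without proof precisely because it follows from Lemma \ref{treee} by this routine induction on $m$, peeling off one term $X_i$ at a time and using symmetry of $\lambda$ to convert the partial $k$-sequence condition on $(E-B)\cup(\bigcup_{i=1}^{j}X_i)$ into the hypothesis that the shrinking bag remains $k$-separating. Your closing observation---that the $\mathcal{S}$-terminal-bag conclusion of Lemma \ref{treee} is what keeps the induction self-sustaining---is also the right bookkeeping point.
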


\begin{lemma}
\label{openpath}
Let $\mathcal{T}$ a tangle of order $k$ in a connectivity system $(E,\lambda)$, and let $\mathcal{S}$ be a tree compatible set. Let $T$ be a partial $(k,\mathcal{S})$-tree for $\mathcal{T}$, and let $B$ be an $\mathcal{S}$-terminal-bag of $T$. If $(C,E-C)$ is a $(k,\mathcal{S})$-separation such that $\fcl_{\mathcal{T}}(B)=\fcl_{\mathcal{T}}(C)$, then there is a partial $(k,\mathcal{S})$-tree $T '$ that is equivalent to $T$ with terminal bag $C$. 
\end{lemma}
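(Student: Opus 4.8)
The plan is to move the terminal bag from $B$ to $C$ in two stages that both stay inside the $\mathcal{T}$-equivalence class of $T$: first grow $B$ up to the common full closure $F := \fcl_{\mathcal{T}}(B) = \fcl_{\mathcal{T}}(C)$, and then shrink $F$ back down to $C$. Each stage is carried out by one of the corollaries already in hand, so the real work is in arranging the partial $k$-sequences correctly.

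For the first stage I would take a maximal partial $k$-sequence $(X_i)_{i=1}^m$ for $B$. By Lemma \ref{maxsequence} this gives $B \cup (\bigcup_{i=1}^m X_i) = \fcl_{\mathcal{T}}(B) = F$, and Corollary \ref{fclbag} then supplies a partial $(k,\mathcal{S})$-tree $T_1$ equivalent to $T$ in which $F$ is an $\mathcal{S}$-terminal-bag. (If $B$ is already fully closed the sequence is empty and $T_1 = T$.) So after this stage I may assume that the terminal bag is $F$.

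For the second stage I would take a maximal partial $k$-sequence $(C_i)_{i=1}^n$ for $C$, so that Lemma \ref{maxsequence} gives $C \cup (\bigcup_{i=1}^n C_i) = \fcl_{\mathcal{T}}(C) = F$, whence $F - C = \bigcup_{i=1}^n C_i$. The crucial point---using exactly the reversal trick from the proof of Lemma \ref{3.3}---is that the reversed sequence $(C_{n-i+1})_{i=1}^n$ is a partial $k$-sequence for $E - F$: each term is a non-empty $\mathcal{T}$-weak subset of $F \subseteq E - (E-F)$, and for each $j$ the set $(E-F) \cup (\bigcup_{i=1}^j C_{n-i+1})$ is the complement of the $k$-separating prefix $C \cup (\bigcup_{i=1}^{n-j} C_i)$, hence $k$-separating by the symmetry of $\lambda$. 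Applying Corollary \ref{treeecor} to $T_1$ with this sequence then yields a partial $(k,\mathcal{S})$-tree $T'$ equivalent to $T_1$ whose $\mathcal{S}$-terminal-bag is $F - (\bigcup_{i=1}^n C_{n-i+1}) = F - (F - C) = C$.

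Finally, since $\mathcal{T}$-equivalence of partial $(k,\mathcal{S})$-trees is transitive, $T'$ is equivalent to $T$ and has $C$ as a terminal bag, as required. I expect the main obstacle to be the second stage: one must verify carefully that reversing the maximal partial $k$-sequence for $C$ really does produce a partial $k$-sequence for $E - F$ (so that Corollary \ref{treeecor} is applicable), and that the set peeled off is exactly $F - C$, so that the resulting terminal bag is precisely $C$ rather than merely some set with full closure $F$.
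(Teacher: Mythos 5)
Your proof is correct and takes essentially the same route as the paper's own proof: grow $B$ up to the common full closure via a maximal partial $k$-sequence and Corollary \ref{fclbag}, then shrink to $C$ by observing that the reversed maximal partial $k$-sequence for $C$ is a partial $k$-sequence for $E-\fcl_{\mathcal{T}}(B)$ and applying Corollary \ref{treeecor}. Your verification of the reversal step (via complementation and the symmetry of $\lambda$) is in fact spelled out more carefully than in the paper, which simply asserts it.
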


\begin{proof}
Assume that $(C,E-C)$ is a $(k,\mathcal{S})$-separation such that $\fcl_{\mathcal{T}}(B)= \fcl_{\mathcal{T}}(C)$. Then $(C,E-C)$ is $\mathcal{T}$-equivalent to $(B,E-B)$ by Lemma \ref{3.3}. Let $(X_i)_{i=1}^n$ be a maximal partial $k$-sequence for $B$, and let $(Y_i)_{i=1}^m$ be a maximal partial $k$-sequence for $C$. Then $\fcl_{\mathcal{T}}(B)=B\cup (\bigcup_{i=1}^n X_i)$ and $\fcl_{\mathcal{T}}(C)=C\cup (\bigcup_{i=1}^m Y_i)$ by Lemma \ref{maxsequence}. By Corollary \ref{fclbag}, there is a partial $(k,\mathcal{S})$-tree $T'$ for $\mathcal{T}$ that is equivalent to $T$ such that $\fcl_{\mathcal{T}}(B)$ is an $\mathcal{S}$-terminal-bag of $T '$. Now $(Y_{m-i+1})_{i=1}^m$ is a partial $k$-sequence for $E-\fcl_{\mathcal{T}}(B)$, so, by Corollary \ref{treeecor}, there is a partial $(k,\mathcal{S})$-tree $T ''$ for $\mathcal{T}$ that is equivalent to $T '$, and hence equivalent to $T$, with terminal bag $C$, as required. 
\end{proof}

\section{Proof of the main theorem}
\label{mainone}

We can now prove Lemma \ref{tree}, from which Theorem \ref{bigone} will easily follow.

\begin{proof}[Proof of Lemma \ref{tree}.]
Suppose that $(R,G)$ is a $(k,\mathcal{S})$-separation that is not equivalent to any $(k,\mathcal{S})$-separation displayed by $T$. Then $(R,G)$ conforms with $T$ by (P5), so we may assume, by possibly replacing $(R,G)$ by a $\mathcal{T}$-equivalent $(k,\mathcal{S})$-separation, that $R$ is properly contained in a bag $B$ of $T$. Let $u$ be the vertex of $T$ labelled by $B$. We distinguish two cases:
\begin{enumerate}
 \item[(I)] $u$ is a leaf of $T$; and
 \item[(II)] $u$ is not a leaf of $T$.
\end{enumerate}
Consider case (I). 

\begin{claim}
 $(B,E-B)$ is a $(k,\mathcal{S})$-separation.
\end{claim}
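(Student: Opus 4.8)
The plan is to verify directly that both $B$ and $E-B$ lie in $\mathcal{S}$, so that $(B,E-B)$ qualifies as a $(k,\mathcal{S})$-separation. Since $u$ is a leaf labelled by $B$, deleting the unique edge $e$ incident with $u$ splits $T$ into the one-vertex component $\{u\}$, which displays $B$, and a second component displaying the union of all remaining bags, namely $E-B$. Thus $(B,E-B)$ is precisely the partition of $E$ displayed by $e$, and by (P1) it is a $\mathcal{T}$-strong $k$-separation; in particular $\lambda(B)\le k$ and both $B$ and $E-B$ are $\mathcal{T}$-strong.

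First I would establish that $B\in\mathcal{S}$. Because $(R,G)$ is a $(k,\mathcal{S})$-separation we have $R\in\mathcal{S}$, and by hypothesis $R\subseteq B$. Since $(B,E-B)$ is a $\mathcal{T}$-strong $k$-separation, axiom (S2) applied to the containment $R\subseteq B$ yields $B\in\mathcal{S}$ immediately.

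The remaining, and genuinely more delicate, point is that $E-B\in\mathcal{S}$: here the upward-closure axiom (S2) is of no help, since $E-B$ is contained in $G\in\mathcal{S}$ rather than containing a member of $\mathcal{S}$, so I cannot simply transfer membership from $G$. Instead I would exploit the structure of $T$ at the other end of $e$ and split into two cases according to the type of the neighbour $v$ of $u$. If $v$ is a bag vertex, then $e$ is incident with two bag vertices and (P1) asserts outright that $(B,E-B)$ is a $(k,\mathcal{S})$-separation, giving $E-B\in\mathcal{S}$. If instead $v$ is a non-bag vertex, then $B$ is one of the petals of the $k$-flower $\Phi$ displayed by $v$, namely the petal corresponding to the component $\{u\}$ of $T-v$; by (P3) or (P4) this flower has $\mathcal{S}$-order at least three, so Lemma \ref{nompetalt} gives $E-P_i\in\mathcal{S}$ for every petal $P_i$, and in particular $E-B\in\mathcal{S}$. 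In either case $E-B\in\mathcal{S}$, and combined with $B\in\mathcal{S}$ this shows that $(B,E-B)$ is a $(k,\mathcal{S})$-separation. The main obstacle is precisely the asymmetry just noted---membership of $E-B$ cannot be read off from $G$---which is why the argument must appeal to the display of $(B,E-B)$ by the leaf edge and to the differing behaviour of $\mathcal{S}$ at bag versus flower vertices.
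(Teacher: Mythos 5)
Your proof is correct and follows essentially the same route as the paper's: both establish $B\in\mathcal{S}$ from $R\subseteq B$ via (S2), and handle $E-B$ by splitting on whether the neighbour of $u$ is a bag vertex (using (P1)) or a $k$-flower vertex (using Lemma \ref{nompetalt} with (P3)/(P4)). The only cosmetic difference is that you observe $B$ is itself a petal of the flower at $v$, whereas the paper only uses that $B$ is contained in a petal and then invokes (S2) once more; both are valid.
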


\begin{subproof}
 If $u$ is adjacent to a bag vertex, then the result follows immediately from (P1). Assume that $u$ is adjacent to a $k$-flower vertex $v$, and let $\Phi=(P_1,\ldots, P_n)$ be the $k$-flower corresponding to $v$. Then $B$ contains $R$, so $B\in \mathcal{S}$ by (S2). On the other hand, $B$ is contained in a petal of $\Phi$, so $E-B$ contains $E-P_i$ for some petal $P_i$ of $\Phi$. Then $E-B\in \mathcal{S}$ by Lemma \ref{nompetalt} and (S2). Thus $(B,E-B)$ is a $(k,\mathcal{S})$-separation.
\end{subproof}

Now by Corollary \ref{treeecor} we may assume, by possibly replacing $T$ by an equivalent partial $(k,\mathcal{S})$-tree and replacing $(R,G)$ by an equivalent $(k,\mathcal{S})$-separation, that $E-B$ is fully closed with respect to $\mathcal{T}$. Let $Z$ be a $k$-separating set that is maximal with respect to the property that $R\subseteq Z\subsetneq B$. Let $(W,Z)=(E-Z,Z)$.

\begin{claim}
\label{blah}
 $(W,Z)$ is a $(k,\mathcal{S})$-separation that is not equivalent to any $(k,\mathcal{S})$-separation that is displayed by $T$.
\end{claim}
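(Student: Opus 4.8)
My plan is to prove the two assertions in turn. That $(W,Z)$ is a $(k,\mathcal{S})$-separation I would get from the axiom (S2). Since $(R,G)$ is a $(k,\mathcal{S})$-separation it is non-sequential, hence $\mathcal{T}$-strong, so $R$ and $G$ are both $\mathcal{T}$-strong. As $R\subseteq Z$ the set $Z$ is $\mathcal{T}$-strong, and as $Z\subsetneq B$ we have $E-B\subseteq W$, with $E-B$ $\mathcal{T}$-strong because $(B,E-B)$ is a $(k,\mathcal{S})$-separation; hence $W$ is $\mathcal{T}$-strong and $(Z,W)$ is a $\mathcal{T}$-strong $k$-separation. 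Applying (S2) to $R\in\mathcal{S}$ with $R\subseteq Z$ gives $Z\in\mathcal{S}$, and applying it to $E-B\in\mathcal{S}$ with $E-B\subseteq W$ gives $W\in\mathcal{S}$, so $(W,Z)$ is a $(k,\mathcal{S})$-separation.

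For the second assertion I would argue by contradiction, assuming $(W,Z)$ is equivalent to a $(k,\mathcal{S})$-separation $(X,E-X)$ displayed by $T$; after possibly interchanging the two sides, Lemma \ref{3.3} lets me take $\fcl_{\mathcal{T}}(X)=\fcl_{\mathcal{T}}(Z)$. The first key point is that $\fcl_{\mathcal{T}}(Z)\neq\fcl_{\mathcal{T}}(B)$: since $Z\subsetneq B$ we have $W\supsetneq E-B$ and so $\fcl_{\mathcal{T}}(W)\supsetneq E-B$, whereas if $\fcl_{\mathcal{T}}(Z)=\fcl_{\mathcal{T}}(B)$ then Lemma \ref{3.3}, applied to the non-sequential separations $(Z,W)$ and $(B,E-B)$, would force $\{\fcl_{\mathcal{T}}(Z),\fcl_{\mathcal{T}}(W)\}=\{\fcl_{\mathcal{T}}(B),E-B\}$ and hence $\fcl_{\mathcal{T}}(W)=E-B$, a contradiction. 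By Lemma \ref{fcloperator} this gives $B\not\subseteq\fcl_{\mathcal{T}}(Z)$.

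The second key point uses that $B$ is a terminal bag. From $X\subseteq\fcl_{\mathcal{T}}(X)=\fcl_{\mathcal{T}}(Z)$ and $B\not\subseteq\fcl_{\mathcal{T}}(Z)$ I get $B\not\subseteq X$. But every part of a partition displayed by an edge or a $k$-flower vertex of $T$ is a union of bags, so the single bag $B$ is either contained in $X$ or disjoint from it; hence $B\cap X=\emptyset$, giving $X\subseteq E-B$ and $\fcl_{\mathcal{T}}(X)\subseteq\fcl_{\mathcal{T}}(E-B)=E-B$. This is impossible, since $\fcl_{\mathcal{T}}(X)=\fcl_{\mathcal{T}}(Z)\supseteq R$ and $R$ is a non-empty subset of $B$, completing the proof. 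The part I expect to be the main obstacle is recognising that one should not try to show $(W,Z)$ is equivalent to $(R,G)$: that equivalence can genuinely fail, because $Z$ may contain $\mathcal{T}$-strong elements outside $\fcl_{\mathcal{T}}(R)$. The productive idea is instead to weigh $\fcl_{\mathcal{T}}(Z)$, which meets $B$ only through $R$, against the rigidity of a displayed separation relative to the terminal bag $B$, and the inequality $\fcl_{\mathcal{T}}(W)\supsetneq E-B$ is precisely what excludes the awkward case $B\subseteq\fcl_{\mathcal{T}}(Z)$.
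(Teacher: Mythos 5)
Your proof is correct and rests on the same pillars as the paper's own argument: (S2) applied to $R\subseteq Z$ and $E-B\subseteq W$ for the first assertion, and, for the second, the facts that a separation displayed by $T$ has the bag $B$ entirely on one side, that $E-B$ is fully closed, and that the maximality of $Z$ gives $E-B\subsetneq W$. The only difference is organizational: the paper first shows the displayed side $Z'$ must meet $Z$ (via non-sequentiality) and hence contain $B$, then derives $\fcl_{\mathcal{T}}(Z)=\fcl_{\mathcal{T}}(B)$ and the final contradiction, whereas you front-load that contradiction to get $\fcl_{\mathcal{T}}(Z)\neq\fcl_{\mathcal{T}}(B)$, rule out $B\subseteq X$, and dispose of the disjoint case with $R\subseteq\fcl_{\mathcal{T}}(X)\subseteq E-B$.
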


\begin{subproof}
 Since $E-B$ is contained in $W$, and $R$ is contained in $Z$, it follows from (S2) that $(W,Z)$ is a $(k,\mathcal{S})$-separation. Now, seeking a contradiction, suppose that $(W,Z)$ is equivalent to a $(k,\mathcal{S})$-separation $(W',Z')$ that is displayed by $T$, with labels chosen such that $\fcl_{\mathcal{T}}(Z)= \fcl_{\mathcal{T}}(Z')$. Since $(W',Z')$ is non-sequential, and $\fcl_{\mathcal{T}}(Z)= \fcl_{\mathcal{T}}(Z')$, it follows that $Z'$ meets $Z$, and so $Z'$ meets $B$. But $Z'$ is a union of bags of $T$, so $B$ is contained in $Z'$. Thus $\fcl_{\mathcal{T}}(Z)=\fcl_{\mathcal{T}}(B)$ by Lemma \ref{fcloperator}, so we also have $\fcl_{\mathcal{T}}(W) =\fcl_{\mathcal{T}}(E-B)$ by Lemma \ref{3.3}. But $E-B$ is fully closed, so it follows that $W\subseteq E-B$; a contradiction because $E-B\subsetneq W$ by the choice of $Z$.
\end{subproof}

We note that, by \ref{blah}, the set $B\cap W$ is $\mathcal{T}$-strong, so the partition $(B\cap W, E-(B\cap W))$ is $\mathcal{T}$-strong. Hence $\lambda(B\cap W)\geq k$ by (T2). 

\begin{claim}
\label{win3}
 If $B\cap W$ is not $k$-separating, then there is a partial $(k,\mathcal{S})$-tree $T'$ such that $T'\succcurlyeq_{\mathcal{S}} T$ and $T'$ displays $(W,Z)$.
\end{claim}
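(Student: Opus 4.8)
The plan is to build $T'$ from $T$ by splitting the terminal bag. Since $u$ is a leaf of $T$, I would adjoin a new leaf $w$ adjacent to $u$, relabel $u$ by the bag $B\cap W=B-Z$, and label $w$ by the bag $Z$ (this is the same kind of move used in Lemma \ref{treee}). The new edge $uw$ then displays the partition $(Z,E-Z)=(Z,W)$, which is the desired $(k,\mathcal{S})$-separation by Claim \ref{blah}. Every other edge and every non-bag vertex of $T'$ displays exactly the separation it displayed in $T$, because the two new bags satisfy $(B\cap W)\cup Z=B$ and $w$ always lies on the same side of any old edge, and in the same component of $T'-v$, as $u$. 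In particular $(B,E-B)$ is still displayed by the edge of $T'$ leaving $u$ towards the rest of the tree. Consequently every $(k,\mathcal{S})$-separation displayed by $T$ is displayed by $T'$, which gives $T\preccurlyeq_{\mathcal{S}}T'$, and $T'$ displays $(W,Z)$ as required.

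It then remains to verify the partial $(k,\mathcal{S})$-tree axioms for $T'$. Axioms (P1)--(P4) are routine: the only genuinely new edge is $uw$, whose displayed separation $(Z,W)$ is a $(k,\mathcal{S})$-separation incident with two bag vertices, while no non-bag vertex or its associated flower has been altered. Note there is no difficulty in $B\cap W$ failing to be $k$-separating, since bags are allowed to be arbitrary subsets; only edges are required to display $k$-separations.

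The \emph{main obstacle} is axiom (P5): I must show every $(k,\mathcal{S})$-separation conforms with $T'$. Any such separation conforms with $T$, and if it is equivalent to one displayed by $T$, or to one with a side contained in a bag other than $B$, then it conforms with $T'$ as well (those separations and bags survive the modification). So, after replacing by an equivalent separation and relabelling, the only case to treat is a $(k,\mathcal{S})$-separation $(X,Y)$ with $X\subseteq B$ that crosses the new split, i.e.\ $X$ meets both $Z$ and $B-Z$.

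To resolve this case I would use the maximality of $Z$, the non-sequentiality of $(W,Z)$, and crucially the hypothesis $\lambda(B-Z)>k$. Since $E-(X\cup Z)\supseteq E-B$ is $\mathcal{T}$-strong, uncrossing $X$ and $Z$ shows $X\cap Z$ is $k$-separating, so $\lambda(X\cap Z)\le k$, and I would split on this value. If $\lambda(X\cap Z)=k$, then $X\cup Z$ is $k$-separating by Lemma \ref{uncrossing2}(i); as $R\subseteq Z\subseteq X\cup Z\subseteq B$, maximality of $Z$ forces $X\cup Z=B$, whence $X-Z=B-Z$ and $\lambda(X-Z)=\lambda(B-Z)>k$, so Lemma \ref{elemconn}(ii) gives $\lambda(Z-X)<k$ and hence $Z-X\in\mathcal{T}$ by (T2); moving this $\mathcal{T}$-weak set across via Lemma \ref{bigeqlemma}(ii) makes $(X,Y)$ equivalent to the displayed separation $(B,E-B)$. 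If instead $\lambda(X\cap Z)<k$, then $X\cap Z\in\mathcal{T}$, and I would sub-divide on $\lambda(Z-X)$: when $\lambda(Z-X)\ge k$, Lemma \ref{elemconn}(ii) yields $\lambda(X-Z)\le k$, so moving the $\mathcal{T}$-weak set $X\cap Z$ out of $X$ makes $(X,Y)$ equivalent to $(X-Z,E-(X-Z))$ with $X-Z\subseteq B-Z$ inside a bag; and when $\lambda(Z-X)<k$ as well, both $X\cap Z$ and $Z-X$ lie in $\mathcal{T}$, so $(Z-X,\,X\cap Z)$ is a partial $k$-sequence for $W$ whose union with $W$ is all of $E$, forcing $\fcl_{\mathcal{T}}(W)=E$ by Lemma \ref{fclcontain} and making $(W,Z)$ sequential --- contradicting Claim \ref{blah}, so this final sub-case does not occur. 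I expect the delicate point to be precisely the bookkeeping of this trichotomy, and in particular recognising that it is exactly the hypothesis $\lambda(B-Z)>k$ that rules out a genuine persistent crossing (the alternative $\lambda(B-Z)=k$ being what produces a flower, to be handled in the complementary case).
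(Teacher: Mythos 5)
Your proposal is correct and follows essentially the same route as the paper: the identical tree modification (new leaf labelled $Z$, $u$ relabelled by $B\cap W$), with (P5) verified by reducing to a crossing $(X,Y)\subseteq B$ and splitting on $\lambda(X\cap Z)$. Your trichotomy is just a cosmetic reorganization of the paper's two cases --- your Lemma \ref{elemconn}(ii) step is the paper's submodularity computation in disguise (since $Z-X=E-(W\cup X)$ and $X-Z=W\cap X$), and your third sub-case (the partial $k$-sequence forcing $\fcl_{\mathcal{T}}(W)=E$) is exactly what the paper compresses into the assertion that $Z-X$ is $\mathcal{T}$-strong ``since $(Z,E-Z)$ is non-sequential.''
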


\begin{subproof}
  Assume that $B\cap W$ is not $k$-separating. Let $T'$ be the tree that is obtained from $T$ by adjoining a new leaf $v$ adjacent to $u$ such that $v$ is a bag vertex labelled by $Z$, and $u$ is relabelled by $B\cap W$. It is easily verified that $T'$ satisfies the first four partial $(k,\mathcal{S})$-tree axioms, (P1)-(P4). Assume that it does not satisfy (P5). Then there is a $(k,\mathcal{S})$-separation $(X,Y)$ that does not conform with $T'$. Since $(X,Y)$ conforms with the partial $(k,\mathcal{S})$-tree $T$, and $T'$ only differs from $T$ by adding $v$ and changing the bag $B$, we may assume, by possibly replacing $(X,Y)$ by an equivalent $(k,\mathcal{S})$-separation, that $X\subsetneq B$ and that both $X\cap Z$ and $B\cap W\cap X$ are non-empty. Assume first that $\lambda(X\cap Z)<k$. Since $E-(X\cap Z)$ is $\mathcal{T}$-strong, it follows that $X\cap Z$ is a member of $\mathcal{T}$ by (T2). Then the partition $(Z-X, E-(Z-X))$ is $\mathcal{T}$-strong, since $(Z,E-Z)$ is non-sequential, so $\lambda(Z-X)\geq k$ by (T2). Now by uncrossing $Y$ and $Z$, whose intersection is $Z-X$, we see that $Y\cup Z$ is $k$-separating. Thus $(X,Y)$ is equivalent to $(X-Z, Y\cup Z)$ by Lemma \ref{bigeqlemma} (ii). But $(X-Z, Y\cup Z)$ conforms with $T'$; a contradiction. Thus we may now assume that $\lambda(X\cap Z)\geq k$. Then $X\cup Z$ is $k$-separating by uncrossing $X$ and $Z$. If $X\cup Z$ is properly contained in $B$, then $X\cup Z$ contradicts our choice of $Z$. Thus we may assume that $X\cup Z=B$, and hence that $B\cap W=W\cap X$. Then $\lambda(W\cap X)=\lambda(B\cap W)>k$ because $B\cap W$ is not $k$-separating. Thus $\lambda(W\cup X)\leq \lambda(W)+\lambda(X)-\lambda(W\cap X)<k$ by the submodularity of $\lambda$, and $W\cup X$ is $\mathcal{T}$-strong, so its complement $Z-X$ is a member of $\mathcal{T}$ by (T2). It now follows from Lemma \ref{bigeqlemma} (ii) that $(X,Y)$ is equivalent to $(B,E-B)$. But $(B,E-B)$ is displayed by $T'$, so $(X,Y)$ conforms with $T'$; a contradiction. It follows from this contradiction that $T'$ is indeed a partial $(k,\mathcal{S})$-tree. Clearly $T'\succcurlyeq_{\mathcal{S}} T$. Moreover, the $(k, \mathcal{S})$-separation $(W,Z)$ is displayed $T'$ but not $T$. 
\end{subproof}
Thus, by \ref{win3}, we may now assume that $B\cap W$ is $k$-separating. Then $\Phi=(Z, B\cap W, E-B)$ is a $k$-flower in $\mathcal{T}$. Let $\Phi '=(P_1,P_2,\ldots,P_n)$ be an $\mathcal{S}$-tight $\mathcal{S}$-maximal $k$-flower in $\mathcal{T}$ such that $\Phi '\succcurlyeq_{\mathcal{S}} \Phi$. Then $\Phi '$ displays a $(k,\mathcal{S})$-separation $(C,E-C)$ that is equivalent to $(B,E-B)$. Thus we may assume that $\fcl_{\mathcal{T}}(B)= \fcl_{\mathcal{T}}(C)$. We observe that, since $E-B$ is fully closed, the set $B$ is contained in $C$. Hence $Z$ is contained in $C$. We may also assume, up to labels, that $C=P_1\cup \cdots \cup P_j$ for some $j\in [n-1]$. Now $\Phi '$ also displays a $(k,\mathcal{S})$-separation $(W',Z')$ that is equivalent to $(W,Z)$. Since $(C,E-C)$ and $(W',Z')$ are inequivalent $(k,\mathcal{S})$-separations, and $Z\subseteq C$, we may assume, by Lemma \ref{confconcatcond}, that $Z'\subseteq C$. Thus, both $(C,E-C)$ and $(W',Z')$ are displayed by the concatenation $\Phi ''=(P_1,\ldots, P_j, P_{j+1}\cup \cdots \cup P_n)$ of $\Phi$.

By Lemma \ref{openpath} there is a partial $(k,\mathcal{S})$-tree $T'$ that is equivalent to $T$ with terminal bag $C$ labelling the vertex $u$. We now let $T''$ be the $\pi$-labelled tree that is obtained from $T'$ as follows: we first adjoin a new flower vertex $v$ adjacent to $u$; then adjoin bag vertices $v_1,\ldots, v_j$ adjacent to $v$ labelling these by $P_1,\ldots,P_j$ respectively; label $v$ by $D$ or $A$ according to the type of $\Phi ''$, and, if necessary, we impose the cyclic order $(vv_1,\ldots, vv_j,vw)$ on the edges incident with $v$; finally, we relabel $u$ by $\emptyset$. We claim that $T''$ is a partial $(k,\mathcal{S})$-tree such that $T''\succcurlyeq_{\mathcal{S}} T$ and that $T''$ displays a $(k,\mathcal{S})$-separation that is equivalent to $(W,Z)$.
 
It is easily verified that (P1) and (P2) hold for $T''$. It is also clear that the partition of $E$ displayed by $T''-v$ is the $k$-flower $\Phi ''=(P_1, \ldots, P_j, P_{j+1}\cup \cdots \cup P_n)$, and we have seen that $\Phi ''$ displays at least two inequivalent $(k,\mathcal{S})$-separations, so $\Phi ''$ has $\mathcal{S}$-order at least 3. Moreover, it follows from Lemma \ref{tightconcatcond} that $\Phi ''$ has no $\mathcal{T}$-loose petals. Thus it follows that the axioms (P3) and (P4) hold for $T''$. Assume that $T''$ does not satisfy (P5). Then there is some $(k,\mathcal{S})$-separation $(X,Y)$ that does not conform with $T ''$. Since $(X,Y)$ conforms with $T'$, and $T''$ only differs from $T'$ by changing the bag $C$, it follows that, by possibly replacing $(X,Y)$ by an equivalent $(k,\mathcal{S})$-separation, we may assume that $X\subseteq C$, and that $X$ is not contained in the bag $P_i$ of $T''$ for any $i\in [j]$. Because $\Phi'$ is an $\mathcal{S}$-tight $\mathcal{S}$-maximal flower in the robust tangle $\mathcal{T}$, it follows from Theorem \ref{TMkflowerconf} that $(X,Y)$ conforms with $\Phi '$. Thus there is a $(k,\mathcal{S})$-separation $(X',Y')$ that is equivalent to $(X,Y)$ with $\fcl_{\mathcal{T}}(X)=\fcl_{\mathcal{T}}(X')$ such that either:
\begin{enumerate}
 \item[(i)] $(X',Y')$ is displayed by $\Phi'$; or
 \item[(ii)] $X'$ or $Y'$ is contained in a petal of $\Phi'$.
\end{enumerate}
Assume first that (i) holds. Then, by Lemma \ref{confconcatcond} and since $X\subseteq C$, we may assume that $X'\subseteq C$. Then $(X',Y')$ is displayed by $\Phi''$, and hence is displayed by $T''$; a contradiction. Thus we may assume that (ii) holds. Suppose that $X'$ is contained in a petal of $\Phi'$. Then, since $\fcl_{\mathcal{T}}(X)=\fcl_{\mathcal{T}}(X')$, the set $X\cap X'$ is non-empty, and $X\subseteq C$, so $X'\subseteq P_i$ for some $i\in [j]$. Hence $X'$ is contained in a bag of $T''$; a contradiction. Assume then that $Y'$ is contained in a petal $P_i$ of $\Phi$. If $i\in [j]$, then $Y'$ is contained in a bag of $T''$. Assume that $i\in [j+1,n]$. Then $X\subseteq C\subseteq X'$, so $(C,E-C)$ is a $(k,\mathcal{S})$-separation that is equivalent to $(X',Y')$ by Lemma \ref{fcloperator}, Lemma \ref{3.3} and (S1). But $(C,E-C)$ is displayed by $T''$, so $(X,Y)$ conforms with $T''$; a contradiction. Thus every $(k,\mathcal{S})$-separation conforms with $T''$. Therefore $T''$ is indeed a partial $(k,\mathcal{S})$-tree. Clearly $T''\succcurlyeq_{\mathcal{S}} T'$, so $T''\succcurlyeq_{\mathcal{S}} T$. Moreover, $T''$ displays a $(k,\mathcal{S})$-separation that is equivalent to $(W,Z)$. Therefore the lemma holds for case (I).

Consider case (II). Choose a $k$-separating set $Z$ that is maximal with respect to the property that $R\subseteq Z\subseteq B$. Then $E-Z$ also contains a member of $\mathcal{S}$, since $T$ is non-trivial, so $(Z,E-Z)$ is a $(k,\mathcal{S})$-separation by (S2). Let $T'$ be the $\pi$-labelled tree obtained from $T$ by adjoining a new leaf $v$ adjacent to $u$ such that $v$ is a bag vertex labelled by $Z$, and $u$ is relabelled by $B-Z$. 

\begin{claim}
 $T'$ is a partial $(k,\mathcal{S})$-tree for $\mathcal{T}$, and $T\preccurlyeq_{\mathcal{S}} T'$.
\end{claim}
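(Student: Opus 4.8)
The plan is to dispose of the relation $T\preccurlyeq_{\mathcal{S}} T'$ and the axioms (P1)--(P4) by a single structural observation, and then to concentrate the real work on (P5). Note that $T'$ arises from $T$ by splitting the bag $B$ at $u$ into the bags $B-Z$ (retained at $u$, and permitted to be empty) and $Z$ (placed at the new leaf $v$), joined by the new edge $uv$. First I would observe that $u$ and $v$ always lie in the same component of $T'\del e$ for each old edge $e$, and in the same component of $T'-w$ for each non-bag vertex $w$; hence deleting any old edge or non-bag vertex of $T'$ displays precisely the partition it displayed in $T$, since the two pieces of $B$ recombine to $B$. This immediately yields $T\preccurlyeq_{\mathcal{S}} T'$, because every $(k,\mathcal{S})$-separation displayed by $T$ is still displayed by $T'$, while the new edge $uv$ additionally displays the $(k,\mathcal{S})$-separation $(Z,E-Z)$. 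The same observation gives (P1)--(P4): old edges and old flower vertices inherit their axioms from $T$, the non-bag vertices together with their $D/A$-labels and cyclic orders are unchanged so (P2) holds, and the new edge $uv$ joins two bag vertices while displaying the $(k,\mathcal{S})$-separation $(Z,E-Z)$.

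The substance is (P5). Suppose, for a contradiction, that some $(k,\mathcal{S})$-separation $(X,Y)$ does not conform with $T'$. Since $T$ is a partial $(k,\mathcal{S})$-tree, $(X,Y)$ conforms with $T$; and as every separation displayed by $T$ is displayed by $T'$, it cannot be equivalent to one displayed by $T$. Thus, after replacing $(X,Y)$ by a $\mathcal{T}$-equivalent $(k,\mathcal{S})$-separation (permissible by (S1), since conformity respects $\mathcal{T}$-equivalence), I may assume $X$ lies in a bag of $T$. Every bag other than $B$ is a bag of $T'$, and $Z$ and $B-Z$ are both bags of $T'$, so nonconformity forces $X\subseteq B$ with both $X\cap Z$ and $X\cap(B-Z)$ nonempty, since otherwise $X$ would sit inside a single bag of $T'$.

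Now I would uncross $X$ with $Z$, distinguishing two cases. If $\lambda(X\cap Z)\geq k$, then $X\cup Z$ is $k$-separating by Lemma \ref{uncrossing2}(i), and since $R\subseteq Z\subsetneq X\cup Z\subseteq B$ this contradicts the maximality of $Z$. If $\lambda(X\cap Z)<k$, then $E-(X\cap Z)\supseteq E-Z$ is $\mathcal{T}$-strong, so $X\cap Z\in\mathcal{T}$ by (T2); because $(Z,E-Z)$ is non-sequential one gets $\lambda(Z-X)\geq k$ (exactly as in the subproof of \ref{win3}, using Lemma \ref{fclcontain}), whence $Y\cup Z$ is $k$-separating by uncrossing $Y$ and $Z$, whose intersection is $Z-X$. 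Moving the $\mathcal{T}$-weak set $X\cap Z$ across then shows, via Lemma \ref{bigeqlemma}(ii) and (S1), that $(X-Z,\,Y\cup Z)$ is a $(k,\mathcal{S})$-separation that is $\mathcal{T}$-equivalent to $(X,Y)$; but $X-Z\subseteq B-Z$ sits inside a bag of $T'$, so $(X,Y)$ conforms with $T'$, a contradiction. This establishes (P5) and hence the claim.

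The main obstacle is exactly this (P5) verification: the delicate point is to show that a hypothetical nonconforming $(k,\mathcal{S})$-separation can always be pushed back inside a single bag of $T'$. The two-case uncrossing analysis is what makes this work, and the crucial ingredient is the maximality of $Z$, which eliminates the case $\lambda(X\cap Z)\geq k$; the remaining $\mathcal{T}$-strongness bookkeeping needed to invoke (T2), the non-sequentiality of $(Z,E-Z)$, and Lemma \ref{bigeqlemma}(ii) is routine but must be tracked carefully.
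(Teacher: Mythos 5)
Your proof is correct and follows essentially the same route as the paper: (P1)--(P4) and $T\preccurlyeq_{\mathcal{S}} T'$ are immediate from the structure of $T'$, and (P5) is verified by pushing a hypothetical nonconforming separation into the bag $B$ and uncrossing with $Z$, using the maximality of $Z$ in one case and Lemma \ref{bigeqlemma}(ii) with (S1) in the other. The only cosmetic difference is that you split cases on whether $\lambda(X\cap Z)\geq k$, whereas the paper splits on whether $X\cap Z$ is $\mathcal{T}$-weak or $\mathcal{T}$-strong; by (T2) these dichotomies drive the same two arguments.
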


\begin{subproof}
  The first four partial $(k,\mathcal{S})$-tree axioms, (P1)-(P4), hold immediately for $T'$. Assume that (P5) does not hold for $T'$. Then there is a $(k,\mathcal{S})$-separation $(Y,E-Y)$ that does not conform with $T'$. Since $(Y,E-Y)$ conforms with $T$, and $T'$ only differs from $T$ by adding $v$ and changing the bag $B$, we may assume, by possibly replacing $(Y,E-Y)$ by an equivalent $(k,\mathcal{S})$-separation, that $Y\subseteq B$ and that both $Y\cap Z$ and $Y\cap (B-Z)$ are non-empty. Assume that $Y\cap Z$ is $\mathcal{T}$-weak. It follows that $\lambda(Z-Y)\geq k$, since the $(k,\mathcal{S})$-separation $(Z,E-Z)$ is non-sequential. But $Z-Y=E-(Y\cup (E-Z))$, so $Y\cap (E-Z)=Y-Z$ is $k$-separating by uncrossing the $k$-separating sets $Y$ and $E-Z$. Then, by Lemma \ref{bigeqlemma} (ii) and (S1), the $k$-separation $(Y-Z, E-(Y-Z))$ is a $(k,\mathcal{S})$-separation that is equivalent to $(Y,E-Y)$. But $(Y-Z, E-(Y-Z))$ conforms with $T'$; a contradiction. Thus we may assume that $Y\cap Z$ is $\mathcal{T}$-strong. Then the partition $(Y\cap Z, E-(Y\cap Z))$ is $\mathcal{T}$-strong, so $\lambda(Y\cap Z)\geq k$ by (T2). Then $Y\cup Z$ is $k$-separating by uncrossing $Y$ and $Z$; a contradiction of the maximality of $Z$. Thus $T'$ satisfies (P5), so $T'$ is a indeed a partial $(k,\mathcal{S})$-tree, and moreover $T\preccurlyeq_{\mathcal{S}} T'$.
\end{subproof}

Now $Z$ labels a leaf of $T'$, and $R\subseteq Z$, so it follows by case (I) that there is a partial $(k,\mathcal{S})$-tree $T''\succcurlyeq_{\mathcal{S}} T'$ such that $T''$ displays a $(k,\mathcal{S})$-separation that is not equivalent to any $(k,\mathcal{S})$-separation displayed by $T'$. Hence $T''\succcurlyeq_{\mathcal{S}} T$ and $T''$ displays a $(k,\mathcal{S})$-separation that is not $\mathcal{T}$-equivalent to any $(k,\mathcal{S})$-separation displayed by $T$.
\end{proof}

At last we can prove our main theorem.

\begin{thm}
\label{bigone}
Let $\mathcal{T}$ be a robust tangle of order $k$ in a connectivity system $(E,\lambda)$, and let $\mathcal{S}$ be a tree compatible set. If $T$ is an $\mathcal{S}$-maximal partial $(k,\mathcal{S})$-tree for $\mathcal{T}$, then every $(k,\mathcal{S})$-separation of $\lambda$ is $\mathcal{T}$-equivalent to some $(k,\mathcal{S})$-separation displayed by $T$.
\end{thm}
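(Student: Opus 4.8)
The plan is to derive Theorem \ref{bigone} as a short consequence of Lemma \ref{tree}, arguing by contradiction and treating separately the cases in which the given $\mathcal{S}$-maximal tree $T$ is trivial or non-trivial. So I would suppose that $T$ is $\mathcal{S}$-maximal yet that there is a $(k,\mathcal{S})$-separation $(R,G)$ that is not $\mathcal{T}$-equivalent to any $(k,\mathcal{S})$-separation displayed by $T$. From this I aim to produce a partial $(k,\mathcal{S})$-tree $T'$ with $T\prec_{\mathcal{S}}T'$, contradicting maximality.

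First I would dispose of the case where $T$ is trivial. Here $T$ displays no $(k,\mathcal{S})$-separations, so $T\preccurlyeq_{\mathcal{S}}T'$ holds vacuously for every partial $(k,\mathcal{S})$-tree $T'$. Applying Lemma \ref{TMflow} to $(R,G)$ yields an $\mathcal{S}$-tight $\mathcal{S}$-maximal $k$-flower $\Phi$ in $\mathcal{T}$ that displays a $(k,\mathcal{S})$-separation $\mathcal{T}$-equivalent to $(R,G)$, and by Corollary \ref{TMkflower} the associated $\Phi$-labelled tree $T'$ is a partial $(k,\mathcal{S})$-tree. Since $T'$ displays a separation equivalent to $(R,G)$ while $T$ displays none, we have $T'\not\preccurlyeq_{\mathcal{S}}T$, whence $T\prec_{\mathcal{S}}T'$, the desired contradiction.

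When $T$ is non-trivial, Lemma \ref{tree} applies directly: as there is a $(k,\mathcal{S})$-separation (namely $(R,G)$) not equivalent to any displayed by $T$, we obtain a partial $(k,\mathcal{S})$-tree $T'$ with $T'\succcurlyeq_{\mathcal{S}}T$ displaying a $(k,\mathcal{S})$-separation that has no $\mathcal{T}$-equivalent separation displayed by $T$ (this is precisely what the construction in the proof of Lemma \ref{tree} provides, via the separation $(W,Z)$ of Claim \ref{blah}). Consequently $T'\not\preccurlyeq_{\mathcal{S}}T$, so again $T\prec_{\mathcal{S}}T'$, contradicting the $\mathcal{S}$-maximality of $T$. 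Since both cases are impossible, every $(k,\mathcal{S})$-separation must be $\mathcal{T}$-equivalent to one displayed by $T$.

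The substantive work has already been carried out in Lemma \ref{tree}, so at the level of the theorem the only point I expect to require care is the passage from ``$T'$ displays a separation not displayed by $T$'' to the strict inequality $T\prec_{\mathcal{S}}T'$. I would make this explicit by recording that the separation produced is not merely undisplayed but \emph{inequivalent} to every separation displayed by $T$, so that $T'\not\preccurlyeq_{\mathcal{S}}T$; this rules out the possibility that $T$ and $T'$ are equivalent and so forces the contradiction with maximality.
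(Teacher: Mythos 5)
Your proof is correct and takes essentially the same route as the paper: handle the trivial case via Lemma \ref{TMflow} and Corollary \ref{TMkflower}, then apply Lemma \ref{tree} to contradict the $\mathcal{S}$-maximality of $T$. Your closing caveat is well placed: the statement of Lemma \ref{tree} only promises a separation ``not displayed by $T$'', but its proof (via $(W,Z)$ in Claim \ref{blah} and the explicit conclusion in case (II)) does deliver one inequivalent to every separation displayed by $T$, which is exactly what the strict inequality $T\prec_{\mathcal{S}}T'$ and hence the contradiction require.
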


\begin{proof}
Assume that $T$ is an $\mathcal{S}$-maximal partial $(k,\mathcal{S})$-tree for $\mathcal{T}$. If there are no $(k,\mathcal{S})$-separations of $\lambda$, then the theorem holds. Suppose that $(R,G)$ is a $(k,\mathcal{S})$-separation of $\lambda$. Then, by Lemma \ref{TMflow}, there is an $\mathcal{S}$-tight $\mathcal{S}$-maximal $k$-flower in $\mathcal{T}$ that displays a $(k,\mathcal{S})$-separation equivalent to $(R,G)$, and so, by Corollary \ref{TMkflower}, there is a partial $(k,\mathcal{S})$-tree $T'$ for $\mathcal{T}$ that displays a $(k,\mathcal{S})$-separation equivalent to $(R,G)$. Thus we may assume that $T$ is a non-trivial partial $(k,\mathcal{S})$-tree for $\mathcal{T}$. Then the theorem holds, or else, by Lemma \ref{tree}, we contradict the $\mathcal{S}$-maximality of $T$.
\end{proof}

If $\mathcal S$ is the collection of $k$-separations that are non-sequential with respect
to $\tangle$, then we will call an $\mathcal S$-maximal partial $(k,{\mathcal S})$-tree
for $\tangle$ a {\em maximal partial $k$-tree} for $\tangle$. For such
$k$-separations Theorem~\ref{bigone} becomes 

\begin{cor}
\label{bigone2}
Let $\tangle$ be a robust tangle of order $k$ in a connectivity system 
$(E,\lambda)$. If $T$ is a maximal partial $k$-tree for 
$\tangle$, then every $k$-separation of $\lambda$ that is non-sequential with
respect to $\tangle$ is equivalent to a $k$-separation displayed by $T$.
\end{cor}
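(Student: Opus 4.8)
The plan is to derive the corollary as an immediate specialisation of Theorem~\ref{bigone}, so the work is entirely a matter of unwinding definitions. Take $\mathcal{S}$ to be the collection of all non-sequential $k$-separating sets in $\lambda$ with $\tangle$-strong complements. As observed following the definition of tree compatibility in Section~\ref{ksep}, this $\mathcal{S}$ is tree compatible, and it was noted there that for this particular $\mathcal{S}$ a $(k,\mathcal{S})$-separation is precisely a non-sequential $k$-separation of $\lambda$.

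By the definition given immediately before the statement, a maximal partial $k$-tree for $\tangle$ is exactly an $\mathcal{S}$-maximal partial $(k,\mathcal{S})$-tree for $\tangle$, for this choice of $\mathcal{S}$. Hence, given such a tree $T$, Theorem~\ref{bigone} applies directly and tells us that every $(k,\mathcal{S})$-separation of $\lambda$ is $\tangle$-equivalent to some $(k,\mathcal{S})$-separation displayed by $T$. Reinterpreting through the identification of $(k,\mathcal{S})$-separations with non-sequential $k$-separations, this is exactly the statement that every non-sequential $k$-separation of $\lambda$ is equivalent to some $k$-separation displayed by $T$, which is the conclusion sought.

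The only point meriting a moment's care is the coincidence of the two notions of separation. Spelling it out, $X\in\mathcal{S}$ means that $X$ is $k$-separating with $E-X$ being $\tangle$-strong and $\fcl_{\tangle}(E-X)\neq E$; requiring both $X,E-X\in\mathcal{S}$ for a $(k,\mathcal{S})$-separation therefore forces $(X,E-X)$ to be a $\tangle$-strong separation with $\fcl_{\tangle}(X)\neq E$ and $\fcl_{\tangle}(E-X)\neq E$, which is precisely the condition for $(X,E-X)$ to be non-sequential. I anticipate no genuine obstacle here: all of the substantive content is already contained in Theorem~\ref{bigone}, and this corollary simply records its most natural instance.
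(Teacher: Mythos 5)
Your proposal is correct and matches the paper's intent exactly: the paper offers no separate proof of Corollary~\ref{bigone2}, treating it as Theorem~\ref{bigone} specialised to the tree-compatible collection $\mathcal{S}$ of all non-sequential $k$-separating sets with $\tangle$-strong complements, which is precisely your argument. Your careful unwinding of why $(k,\mathcal{S})$-separations coincide with non-sequential $k$-separations for this $\mathcal{S}$ is a correct elaboration of what the paper asserts in Section~\ref{ksep}.
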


\subsection*{Vertically $k$-connected matroids} We now interpret Corollary~\ref{bigone2} for vertically $k$-connected
matroids. Recall that if $M$ is a vertically $k$-connected matroid whose rank is
at least $\max\{3k-5,2\}$, then $M$ has a unique tangle $\tangle_k$ of order $k$.

\begin{cor}
\label{bigone3}
Let $M$ be a vertically $k$-connected matroid where $k\geq 2$ and 
$r(M)\geq \max\{8k-15,2\}$. Let $T$ be a maximal partial $k$-tree
for $\tangle_k$. Then every $k$-separation of $M$ that is non-sequential
with respect to $\tangle_k$ is equivalent to some $k$-separation displayed by $T$.
\end{cor}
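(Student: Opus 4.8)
The plan is to show that, under the stated rank hypothesis, the unique tangle $\tangle_k$ is in fact a \emph{robust} tangle of order $k$, and then to invoke Corollary~\ref{bigone2} directly. First I would confirm that $\tangle_k$ is well defined. Since $k\geq 2$ we have $8k-15\geq 3k-5$, so the assumption $r(M)\geq\max\{8k-15,2\}$ implies $r(M)\geq\max\{3k-5,2\}$. Lemma~\ref{one-tangle} then supplies the unique tangle $\tangle_k$ of order $k$ in $M$, with $A\in\tangle_k$ if and only if $r(A)\leq k-2$. In particular $\tangle_k$ already satisfies (T1), (T2), and (T4).

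The key step is to verify the robustness axiom (RT3) for $\tangle_k$. Suppose $A_1,\dots,A_8\in\tangle_k$, so that $r(A_i)\leq k-2$ for each $i$. By the subadditivity of the rank function,
\[
r(A_1\cup\cdots\cup A_8)\leq\sum_{i=1}^{8} r(A_i)\leq 8(k-2)=8k-16.
\]
The threshold $r(M)\geq 8k-15$ is chosen precisely so that $8k-16<r(M)$; hence $r(A_1\cup\cdots\cup A_8)<r(M)$, which forces $A_1\cup\cdots\cup A_8\neq E$. Combined with the previous paragraph, this shows that $\tangle_k$ is a robust tangle of order $k$.

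Finally, I would take $\mathcal{S}$ to be the collection of all non-sequential $k$-separating sets in $\lambda_M$ with $\tangle_k$-strong complements. As observed in Section~\ref{ksep}, this $\mathcal{S}$ is tree compatible, and for it a $(k,\mathcal{S})$-separation is exactly a non-sequential $k$-separation. By definition a maximal partial $k$-tree for $\tangle_k$ is an $\mathcal{S}$-maximal partial $(k,\mathcal{S})$-tree for this particular $\mathcal{S}$. Since $\tangle_k$ is robust, Corollary~\ref{bigone2} applies and yields that every $k$-separation of $M$ that is non-sequential with respect to $\tangle_k$ is equivalent to a $k$-separation displayed by $T$, which is the desired conclusion.

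I do not expect a genuine obstacle here: the entire content of the corollary is the verification that the rank threshold $8k-15$ delivers (RT3), after which the statement is an immediate specialisation of Corollary~\ref{bigone2}. The only point requiring a moment's care is checking that $\max\{8k-15,2\}\geq\max\{3k-5,2\}$ for all $k\geq 2$, so that the existence and uniqueness of $\tangle_k$ from Lemma~\ref{one-tangle} are also guaranteed by the single hypothesis on $r(M)$.
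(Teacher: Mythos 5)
Your proof is correct and follows exactly the route the paper intends: the rank bound $r(M)\geq\max\{8k-15,2\}$ is there precisely so that subadditivity gives $r(A_1\cup\cdots\cup A_8)\leq 8(k-2)=8k-16<r(M)$, making $\tangle_k$ a robust tangle, after which the statement is an immediate specialisation of Corollary~\ref{bigone2}. The paper leaves this verification implicit (``We now interpret Corollary~\ref{bigone2}\dots''), and your write-up, including the check that $\max\{8k-15,2\}\geq\max\{3k-5,2\}$ for $k\geq 2$, fills it in correctly.
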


Finally we note that, if $M$ is strictly $k$-connected, then, in Corollary~\ref{bigone3}, 
we may replace
the condition that $r(M)\geq \max\{8k-15,2\}$ by the condition that
$|E(M)|\geq \max\{8k-15,2\}$.

\bibliographystyle{acm}
\bibliography{tangletree2}

\begin{thebibliography}{1}

\bibitem{aikin2011structure}
{\sc Aikin, J., and Oxley, J.}
\newblock {The structure of the 4-separations in 4-connected matroids,
  submitted}.

\bibitem{aikin2008structure}
{\sc Aikin, J., and Oxley, J.}
\newblock {The structure of crossing separations in matroids}.
\newblock {\em Advances in Applied Mathematics 41}, 1 (2008), 10--26.

\bibitem{geelen2006obstructions}
{\sc Geelen, J., Gerards, B., Robertson, N., and Whittle, G.}
\newblock {Obstructions to branch-decomposition of matroids}.
\newblock {\em Journal of Combinatorial Theory, Series B 96}, 4 (2006),
  560--570.

\bibitem{geelen2009tangles}
{\sc Geelen, J., Gerards, B., and Whittle, G.}
\newblock {Tangles, tree-decompositions and grids in matroids}.
\newblock {\em Journal of Combinatorial Theory, Series B 99}, 4 (2009),
  657--667.

\bibitem{oxley2011matroid}
{\sc Oxley, J.}
\newblock {\em {Matroid theory, Second Edition}}.
\newblock Oxford University Press, New York, 2011.

\bibitem{oxley2004structure}
{\sc Oxley, J., Semple, C., and Whittle, G.}
\newblock {The structure of the 3-separations of 3-connected matroids}.
\newblock {\em Journal of Combinatorial Theory, Series B 92}, 2 (2004),
  257--293.

\bibitem{oxley2007structure}
{\sc Oxley, J., Semple, C., and Whittle, G.}
\newblock {The structure of the 3-separations of 3-connected matroids II}.
\newblock {\em European Journal of Combinatorics 28}, 4 (2007), 1239--1261.

\bibitem{robertson1991graph}
{\sc Robertson, N., and Seymour, P.}
\newblock {Graph minors. X. Obstructions to tree-decomposition}.
\newblock {\em Journal of Combinatorial Theory, Series B 52}, 2 (1991),
  153--190.

\end{thebibliography}
\end{document}